\documentclass[11pt]{article}
\usepackage[numbers,sort&compress]{natbib}
\usepackage{enumerate}
\usepackage{amscd}
\usepackage{array}
\usepackage{amsmath}
\usepackage{latexsym}
\usepackage{amsfonts}
\usepackage{amssymb}
\usepackage{amsthm}
\usepackage{verbatim}
\usepackage{mathrsfs}
\usepackage{enumerate}
\usepackage{hyperref}

\theoremstyle{plain}\newtheorem{definition}{Definition}[section]
\theoremstyle{definition}\newtheorem{theorem}{Theorem}[section]
\theoremstyle{plain}\newtheorem{lemma}[theorem]{Lemma}
\theoremstyle{plain}
\theoremstyle{plain}
\theoremstyle{remark}\newtheorem{remark}{Remark}[section]
\theoremstyle{definition}
\theoremstyle{plain}
\usepackage{xcolor}

\newcommand{\Div}{\mathrm{div}\,}
\newcommand{\B}{\Big}

\newcommand{\be}{\begin{equation}}
\newcommand{\ee}{\end{equation}}
\newcommand{\ba}{\begin{aligned}}
\newcommand{\ea}{\end{aligned}}

\newcommand{\f}{\frac}

\newcommand{\ben}{\begin{enumerate}}
\newcommand{\een}{\end{enumerate}}

\newcommand{\Rmnum}[1]{\expandafter\@slowromancap\romannumeral #1@}

\allowdisplaybreaks

\numberwithin{equation}{section}
\begin{document}%\begin{CJK*}{GBK}{fs}
%%%%%%%%%%%%%%%%%%%%%%%%%%%%%%%%%%%%%%%%%%%%%%%%%%%%%%%%%%%%%%%%%%%%%%%%%%%%%%%%%%%%%%%%%%%%%%%%%%%%
\title{Moments propagation in the Vlasov equation   and application to the Navier-Stokes-Vlasov equations}
\author{Yanqing Wang\footnote{School of Mathematics and   Information Science, Zhengzhou University of Light Industry, Zhengzhou, Henan  450002,  P. R. China Email: wangyanqing20056@gmail.com},\;~~Wei Wei\footnote{ School of Mathematics and Center for Nonlinear Studies,  Northwest University, Xi'an, Shaanxi 710127,  P. R. China  Email: ww5998198@126.com },\;~~ Yulin Ye\footnote{Corresponding author. School of Mathematics and Statistics,
 Henan University,
 Kaifeng, 475004,
 P. R. China. Email: ylye@vip.henu.edu.cn   }
 \;~~and\;~~  Huan Yu\footnote{
School of Applied Science, Beijing Information Science and Technology University, Beijing, 100192, P.
R. China Email: yuhuandreamer@163.com} 
}
\date{}
\maketitle \vspace{-1.2cm}
\begin{abstract}
In this paper, we study how the propagation of particle moments depends on the fluid velocity in the Vlasov equation. We present some new moment estimates involving the velocity in this equation.    As an application, we establish Ladyzhenskaya-Prodi-Serrin-type regularity criteria involving only the velocity field for weak solutions of the 3D Navier-Stokes-Vlasov system.

\end{abstract}
\noindent {\bf MSC(2020):}\quad 35L65, 76D03, 76D05, 35Q35 \\\noindent
{\bf Keywords:}
moment propagation; Navier--Stokes--Vlasov system;
weak solutions;
regularity. \vspace{-0.2cm}
%%%%%%%%%%

\section{Introduction}
\label{intro}
 This work is to examine
 the minimal regularity required of the fluid velocity to
ensure the propagation of moments in the  Vlasov equation
\be\label{Vlasov}
\partial_t f+v\cdot\nabla_{x}f+\nabla_{v}\cdot[f(u-v) ] =0,
\ee
where $f=f(t,x,v)$ represents the particle distribution function  and $u=u(t,x)$ denotes the fluid velocity.
This equation arises      the    fluid-particle  equations such as  the Navier-Stokes-Vlasov system:
\be\left\{\ba\label{NSV}
&\partial_t u +u \cdot \nabla u -  \Delta u +\nabla \Pi =-\int_{\mathbb{R}_{v}^{3}} f(u-v)dv, \\
&\partial_t f+v\cdot\nabla_{x}f+\nabla_{v}\cdot[f(u-v) ] =0, \\
&\Div u = 0, \\
&u(0,x) = u_{0}, \quad f(0,x,v) =f_{0},
\ea\right.\ee
Here, $\Pi=\Pi(t,x)$ is the pressure.
Equations \eqref{NSV} describes the interaction between a viscous incompressible fluid and a dispersed phase of non-collisional particles (see e.g. \cite{[Koch],[CHL]}).
The fluid dynamics follow the incompressible Navier-Stokes equations, while the particles evolve according to a Vlasov-type kinetic equation.
The coupling arises through a friction force of the form $F = F_{0}(u-v)$.
In this work, we assume that $ F_0=1$.
This coupled system, originally derived by Walliams in the context of combustion theory \cite{[Walliams]}, has practical applications in biosprays and dilute gas-solid suspensions (see, e.g., \cite{[BBJM],[CPP], [CHL], [Koch]}).

In a seminal work, Hamdache \cite{[Hamdache]} established the global existence of solutions to the coupled           Navier-Stokes-Vlasov equations \eqref{NSV}  with the advective terms
removed from the velocity equations and analyzed their large-time asymptotic behavior in both two and three spatial dimensions.
Furthermore, Hamdache derived a priori estimates for the particle density $n=\int_{\mathbb{R}^{d}_{v}}fdv$ and the mean velocity $j=\int_{\mathbb{R}^{d}_{v}}fvdv$ from the Vlasov equation in \eqref{NSV} as follows:
\be\ba\label{1.3}
& \|n \|_{L_{t}^{\infty}(0,T;L_{x}^{\f{d +k}{d }})} \leq C (\|f_{0}\|_{L^{\infty}_{ x,v}}+1)\B[ \|f_{0}|v|^{k } \|_{L_{x,v}^{1}} ^{\f{1 }{d +k}}+ (\|f_{0}\|_{L^{\infty}_{ x,v}}+1) \B(\int_{0}^{T}\|u\|_{L_{x}^{ d +k }}ds\B) \B]^{d},
\\
& \|j \|_{L_{t}^{\infty}(0,T;L_{x}^{\f{d +k}{d +1}})} \leq C (\|f_{0}\|_{L^{\infty}_{ x,v}}+1)\B[ \|f_{0}|v|^{k } \|_{L_{x,v}^{1}}^{\f{1 }{d +k}}+ (\|f_{0}\|_{L^{\infty}_{ x,v}}+1) \B(\int_{0}^{T}\|u\|_{L_{x}^{ d +k }}ds\B) \B]^{d+1}.
\ea\ee
Hamdache's moment estimates \eqref{1.3} have been application to the
existence of weak solutions of the homogeneous Navier-Stokes-Vlasov equations and
  inhomogeneous case (see \cite{[BDGM],[MV],[Yu],[YY],[WY]}),
 regularity of weak solutions of the   Navier-Stokes-Vlasov equations (see \cite{[Yu2],[Dechicha],[HMMM]}), well-posedness of the Navier-Stokes-Vlasov-Fokker-Planck system  (see \cite{[CKL]}),
 uniqueness and energy identity of weak solutions,   large time behavior  (see \cite{[HMM]}). Propagation of moments plays an important role in   the Vlasov type equation (see \cite{[PP]}).
The first objective is to generalize Hamdache's estimate  \eqref{1.3} and study the higher moment estimates of the Vlasov equations \eqref{Vlasov}.
\begin{theorem}\label{newthe1.1}
Let the pair $(u, f)$ be a solution
of the Vlasov equation \eqref{Vlasov}.
Then, there holds
\begin{align}
& \f{d}{dt}\| f |v|^{k}\|_{L^{1}_{x, v} } \leq C k \|u\|_{L^{k +d }_{x}} \|f_{0} \|^{\f{1}{k +d }}_{ L^{\infty}_{x,v}}\B\|\int_{ \mathbb{R}_{v}^{d}}
f  |v|^{k  } dv\B \|_{L_{x}^{1}}^{\f{k +d-1}{k +d }};\label{pri1}\\
 &
  \frac{d}{d t} \left\|\langle v\rangle^{k} D f\right\|_{L^{p}_{x,v}}^{p}  \leq
C\left(\|u\|_{L^{\infty}_{x} }+\|\nabla u\|_{L^{\infty}_{x} }+1\right)\left\|\langle v\rangle^{k} D f\right\|_{L^{p}_{x,v}}^{p};\label{pri2}
\\&
 \frac{d}{d t}\left\|\langle v\rangle^{k}D^{2}f \right\|_{L^{2}_{x,v}}^{2}
 \leq
C\|\partial_{i}\partial_{j} u\|_{L^{4} _{x} }\left\|\langle v\rangle^{k} D^{2}f \right\|_{L^{2}_{x,v}}^{\f{n}{4}+1}
+ C(\|\nabla u\|_{L^{\infty} _{x} }+\| u\|_{L^{\infty}_{x}}+1)\left\|\langle v\rangle^{k} D^{2}f \right\|_{L^{2}_{x,v}}^{2};\label{pri3}
 \end{align}
\end{theorem}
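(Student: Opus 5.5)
We prove the three inequalities by weighted energy estimates on \eqref{Vlasov}, integrating by parts in $x$ and $v$ and using only the transport structure $\partial_t f+v\cdot\nabla_x f+(u-v)\cdot\nabla_v f=3f$ (in dimension $d$ the right-hand side is $d\,f$, since $\nabla_v\cdot(u-v)=-d$). The characteristics give two facts we use throughout. First, $f\ge0$ is preserved. Second, $\|f(t)\|_{L^\infty}\le e^{dt}\|f_0\|_{L^\infty}$. On a bounded time interval this exponential factor is absorbed into $C$, which is how we read $\|f_0\|_{L^\infty}$ in \eqref{pri1}.

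\textbf{Proof of \eqref{pri1}.} Multiply the equation by $|v|^k$ and integrate. The $v\cdot\nabla_x$ term vanishes. Integrating by parts in $v$,
$$\frac{d}{dt}\int f|v|^k=k\int f(u-v)\cdot v|v|^{k-2}\le k\int |u|\,f|v|^{k-1}\,dv\,dx,$$
where we dropped $-k\int f|v|^k\le 0$. Set $m_\ell(x)=\int f|v|^\ell dv$. The standard interpolation splits the $v$-integral at $|v|=R$ and optimizes in $R$:
$$m_{k-1}\le C\|f\|_{L^\infty}^{1/(k+d)}\,m_k^{(k+d-1)/(k+d)}.$$
Hölder in $x$ with exponents $k+d$ and $(k+d)/(k+d-1)$ then gives $\int|u|m_{k-1}\le C\|u\|_{L^{k+d}}\|f_0\|_\infty^{1/(k+d)}\|m_k\|_{L^1}^{(k+d-1)/(k+d)}$, which is \eqref{pri1}.

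\textbf{Proof of \eqref{pri2}.} Let $D$ denote $\nabla_x$ or $\nabla_v$, and apply it to the equation. Since $\nabla_x$ commutes with $v\cdot\nabla_x$ and with $(u-v)\cdot\nabla_v$ only up to commutators, we get
$$\partial_t\partial_{x_i}f+v\cdot\nabla_x\partial_{x_i}f+(u-v)\cdot\nabla_v\partial_{x_i}f=d\,\partial_{x_i}f-\partial_{x_i}u\cdot\nabla_v f,$$
$$\partial_t\partial_{v_i}f+v\cdot\nabla_x\partial_{v_i}f+(u-v)\cdot\nabla_v\partial_{v_i}f=(d+1)\partial_{v_i}f-\partial_{x_i}f.$$
Multiply by $p\langle v\rangle^{kp}|Df|^{p-2}Df$ and integrate. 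The transport terms produce $\int |Df|^p\nabla_v\cdot\big((u-v)\langle v\rangle^{kp}\big)$, and this is bounded by $C(1+|u|)\langle v\rangle^{kp}|Df|^p$ because $|\nabla_v\langle v\rangle^{kp}|\le C\langle v\rangle^{kp-1}$ and $|u-v|\langle v\rangle^{-1}\le C(1+|u|)$. The commutator terms are bounded by $(\|\nabla u\|_\infty+1)\|\langle v\rangle^kDf\|_p^p$. Summing over all components of $D$ gives \eqref{pri2}.

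\textbf{Proof of \eqref{pri3}.} Differentiate the equation twice. The transport part is handled exactly as in \eqref{pri2}, with $p=2$. The commutators are of three kinds:
\begin{itemize}
\item terms $\nabla u\,D^2f$ and $D^2f$, bounded as before;
\item the term $\partial_i\partial_j u\cdot\nabla_v f$, which is the only new difficulty.
\end{itemize}
We expect this last term to be the main obstacle. It is bounded by
$$\int|\partial_i\partial_ju|\,\langle v\rangle^{2k}|\nabla_vf|\,|D^2f|\le\|\partial_i\partial_ju\|_{L^4_x}\,\Big\|\langle v\rangle^k\|\nabla_vf\|_{L^2_v}\Big\|_{L^4_x}\,\|\langle v\rangle^kD^2f\|_{L^2}.$$
The middle factor is controlled by a Gagliardo–Nirenberg inequality in $x$, applied to $g(x)=\|\langle v\rangle^k\nabla_vf\|_{L^2_v}$ (Minkowski lets us pass $\nabla_x$ inside the $L^2_v$ norm):
$$\|g\|_{L^4_x}\le C\|g\|_{L^2}^{1-n/4}\|\nabla_xg\|_{L^2}^{n/4}.$$
The factor $\|g\|_{L^2}$ is a first-order quantity. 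We bound it by a constant using \eqref{pri2} together with Gronwall, since $u,\nabla u\in L^1_tL^\infty_x$ in our setting. The factor $\|\nabla_x g\|_{L^2}$ is at most $\|\langle v\rangle^kD^2f\|_{L^2}$. The commutator term is therefore at most $C\|\partial_i\partial_ju\|_{L^4}\|\langle v\rangle^kD^2f\|_{L^2}^{n/4+1}$, which is the first term on the right of \eqref{pri3}. This yields \eqref{pri3}, with the lower-order norms absorbed into $C$.
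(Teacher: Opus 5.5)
Your proposal is correct and follows essentially the same route as the paper. For \eqref{pri1} you use the $|v|^k$-moment identity with kinetic interpolation and H\"older; for \eqref{pri2} you use weighted $L^p$ estimates on the once-differentiated transport equation; and for \eqref{pri3} you give the same Gagliardo--Nirenberg treatment of $\partial_i\partial_j u\cdot\nabla_v f$, absorbing the first-order factor into $C$ through the Gr\"onwall bound from \eqref{pri2}, just as the paper does. If anything, you are slightly more careful than the paper in keeping the cross term $-\partial_{x_i}f$ in the $\partial_{v_i}$-differentiated equation (it is dropped in \eqref{3.53}, harmlessly for the combined bound), and your ordering of norms ($L^2_v$ inside, $L^4_x$ outside) is a harmless variant of the paper's.
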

As a direct consequence of \eqref{pri1}, for $1\leq p<\infty$, one deduces the following estimate via kinetic interpolation inequality \eqref{2.3}
\be\label{geHamdache's}
\B\|\int_{\mathbb{R}^{d}_{v}}
f |v|^{k} dv\B\|_{L^{\infty}(0,T;L_{x}^{p})} \leq C \|f_{0}\|_{L^{\infty}_{ x,v}}^{\f{ p-1}{p}}\B[ \|f_{0} |v|^{pd+pk-d } \| _{L^{1}_{x,v} }+ C \|f_{0} \|_{L^{\infty}_{x,v}} \B(\int_{0}^{T}\|u\|_{L_{x}^{pd+pk }}ds\B)^{pd+pk}\B]^{\f{1}{p} },
\ee
which covers Hamdache's moment estimates \eqref{1.3}. In addition, it is worth pointing out that moment estimate \eqref{pri2} can be found in \cite{[CKL]}.
We deduce the moments estimate \eqref{pri1} concerning $f$ by the measure
$v dv dx$. It seems that it is required   stronger regularity assumptions on the velocity field $u$ via the measure $\langle v\rangle dvdx $ for  \eqref{pri1}, where $\langle v\rangle=\sqrt{1+|v|^{2}}$ (see Lemma \ref{higermomentsestimate2}).
However, in studying higher moment propagation involving $D f$,   and $D^{2} f$, we will use the measure $\langle v\rangle dvdx $ in stead of $ v dvdx,$  which was used by Chae-Kang-Lee \cite{[CKL]} and is particularly effective in dealing with the higher moment estimates as well as
the Navier-Stokes-Vlasov type equations (see \cite{[CKL]} and the proof of Theorem \ref{the1.1}). We refer the reader to Lemma \ref{a.2} in appendix for  higher moment estimates $D^{s}   f\langle v\rangle^{k}$ for $s\geq 3$.
Roughly speaking, a straightforward motivation for studying moment propagation of the Vlasov equations is to gain regularity information on the particle density $n$ and the mean velocity $j$ in $\eqref{NSV}_{1}$.
We would like to mention that recent moment  estimates for the  Vlasov equation \eqref{Vlasov} with
  compact support in velocity was deduced  by
Dechicha  in \cite{[Dechicha]}, where
two blow-up criteria based on both the velocity and  the particle distribution function for Navier-Stokes-Vlasov equations \eqref{NSV}
are derived.

Next, we present an application of moment estimates  in Theorem \ref{newthe1.1} to the regularity of weak solutions of the 3D
Navier-Stokes-Vlasov system. Before we state our result, we recall the mathematical
progress of the Navier-Stokes-Vlasov equations \eqref{NSV}.
Global weak solutions of the system \eqref{NSV} on the torus $\mathbb{T}^{d}$ for $d=2, 3$ were constructed by Boudin et al. \cite{[BDGM]}.
For bounded domains, Yu proved global existence of weak solutions satisfying the energy inequality in \cite{[Yu]}, and also proved uniqueness in two dimensions (see also \cite{[Yu2]}).
With the addition of a velocity-space diffusion term $\Delta_{v}f$ to the left-hand side of $\eqref{NSV}_{2}$, the kinetic equation becomes the Vlasov-Fokker-Planck equation.
In this framework, Mellet and Vasseur \cite{[MV]} proved the existence of weak solutions to the compressible Navier-Stokes-Vlasov-Fokker-Planck system.
The hydrodynamic limit of this compressible system was studied in \cite{[GJV],[GJV1]}. Chae, Kang, and Lee \cite{[CKL]} studied higher regularity results for solutions of the 3D Vlasov-Stokes system in the whole space, using the weighted measure $\langle v\rangle dvdx$.
They also proved global existence of weak solutions for the Navier-Stokes-Vlasov-Fokker-Planck system in both two and three dimensions  in \cite{[CKL]}.
Boudina, Grandmont, and Moussa \cite{[BGM]} proved the existence of global weak solutions to the 3D incompressible Navier-Stokes-Vlasov system \eqref{NSV} in a time-dependent domain with absorption boundary conditions for the kinetic part.
The existence of global weak solutions to the initial-boundary value problem for the inhomogeneous Navier-Stokes-Vlasov system was studied by Wang and Yu \cite{[WY]}.
Local and global strong solutions to the inhomogeneous Vlasov-Navier-Stokes system with small initial data were constructed  by Choi and Kwon in \cite{[CK]}, where the large-time behavior of solutions was also considered.
Han-Kwan, Miot, Moussa, and Moyano \cite{[HMMM]} studied the uniqueness and energy identity of weak solutions to the two-dimensional Vlasov-Navier-Stokes equations \eqref{NSV}.
The large time behavior of Fujita-Kato type solutions to the Vlasov-Navier-Stokes system in a periodic domain was considered by Han-Kwan, Moussa, and Moyano in \cite{[HMM]} (see \cite{[Kwan]} for the whole space case).
We refer the reader to \cite{[DLM],[GY],[HMM],[YY],[WY],[MW],[LWW],[LS],[LMW],[He],[GHMZ],[CG]} for further studies on various aspects of the Navier-Stokes-Vlasov type equations and turn our attention to the contributions in \cite{[Hamdache],[Yu],[Yu2],[CKL],[BGM],[HMMM]}.

Navier-Stokes-Vlasov equations \eqref{NSV} without the   particle distribution function reduce to the Navier-Stokes equations.
For the classical 3D Navier-Stokes equations, a foundational result concerning the regularity and uniqueness of Leray--Hopf weak solutions is the Ladyzhenskaya-Prodi-Serrin (LPS) condition (see \cite{[Galdi]}).
We recall the LPS criteria of the 3D Navier--Stokes equations.
A weak solution $u$ is smooth on $(0,T]$ if it satisfies one of the following conditions:
\begin{enumerate}[(1)]
\item Serrin \cite{[Serrin]}, Struwe \cite{[Struwe]}, Escauriaza, Seregin and \v{S}ver\'{a}k \cite{[ESS]}
\be \label{serrin1}
u\in L^{p} (0,T;L^{q}( \mathbb{R}^{3})) ~~~ \text{with}~~~~2/p+3/q=1, ~~q\geq3.
\ee
\item Beirao da Veiga \cite{[Beirao da Veiga],[Beirao da Veiga2]}
\be \label{serrin2}
\nabla u\in L^{p} (0,T;L^{q}( \mathbb{R}^{3})) ~~~ \text{with}~~~~2/p+3/q=2 , ~~q>3/2.
\ee
\item Berselli and Galdi \cite{[BG]}, Struwe \cite{[Struwe2]}, Zhou \cite{[Zhou1],[Zhou2]}
\be \label{serrin3}\ba
&\Pi \in L^{p} (0,T;L^{q}( \mathbb{R}^{3})) ~~~ \text{with}~~~~2/p+3/q=2 , ~~q>3/2;\\
&
 \nabla\Pi \in  L^{p} (0,T;L^{q}( \mathbb{R}^{3})) ~~~ \text{with}~~~~2/p+3/q=3 , ~~q>1.
  \ea\ee
\end{enumerate}
This naturally raises the question of whether the LPS criteria can be extended to weak solutions of the 3D Navier--Stokes--Vlasov system \eqref{NSV}.
The next   aim of this paper is to investigate such extensions via the above propagation of moments of the Vlasov. Our second result is the following:
\begin{theorem}
\label{the1.1}
Let $(u,\,\Pi,\,f)$ be a weak solution to the Navier-Stokes-Vlasov system \eqref{NSV}
as defined by Lemma \ref{wsdefi}.
Assume that $u_{0} \in H^{2}(\mathbb{R}^{3})$,  $f_{0}\in L^{\infty}(\mathbb{R}^{3} \times \mathbb{R}^{3}_{v}),$
$|v|^{k}f_{0} \in L^{1}(\mathbb{R}^{3} \times \mathbb{R}^{3}_{v})$, and
$\langle v \rangle^{k} D f_{0}, \langle v\rangle^{k} D ^{2} f_{0}\in L^{2}\left(\mathbb{R}^{3} \times \mathbb{R}^{3}_{v}\right)$ for any $k \geq 0$.
Then the pair $(u(x,t), f(x,v,t))$ is a strong solution on $(0,T]$ if one of the following conditions holds:
\begin{enumerate}[(1)]
\item \label{coro1}
$ u \in L^{p }(0,T; L ^{q }(\mathbb{R}^{3}))
\quad \text{with} \quad
2/p+3/q=1 , ~\f{24}{7} \leq q\leq\infty;$

\item
$\nabla u \in L^{p }(0,T; L ^{q }(\mathbb{R}^{3}))
\quad \text{with} \quad
2/p+3/q=2, \f85\leq q\leq\infty;$

\item
$\Pi\in L^{p }(0,T; L ^{q }(\mathbb{R}^{3}))
\quad \text{with} \quad
2/p+3/q=2 , ~\f{9}{4} \leq q\leq3;$

\item
$\nabla\Pi \in L^{p }(0,T; L ^{q }(\mathbb{R}^{3}))
\quad \text{with} \quad
2/p+3/q=3 , ~\f97\leq q\leq3$.
\end{enumerate}
\end{theorem}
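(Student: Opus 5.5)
The plan is to run a standard continuation argument. Starting from $u_0\in H^2$ and the assumed moment and derivative bounds on $f_0$, there is a local strong solution. By weak--strong uniqueness it coincides with the weak solution as long as it exists. It therefore suffices to show that, under each of conditions (1)--(4), the quantities $\|\nabla u\|_{L^\infty_tL^2_x}$, $\|\Delta u\|_{L^2_tL^2_x}$, then $\|u\|_{L^\infty_tH^2}$, and $\|\langle v\rangle^kD^sf\|_{L^\infty_tL^2_{x,v}}$ for $s=1,2$, stay bounded on $(0,T]$.

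The first step handles the coupling term $F:=-\int f(u-v)\,dv=-nu+j$. From \eqref{geHamdache's}, applied with $k=0$ (for $n$) and $k=1$ (for $j$),
\[
\|n\|_{L^\infty_tL^{p}_x}+\|j\|_{L^\infty_tL^{p'}_x}\le C\Big(1+\int_0^T\|u\|_{L^{r}_x}ds\Big)^{r}
\]
with $r=3p$ or $3p'+p'$, respectively. The right-hand side is controlled in terms of a Lebesgue norm of $u$. Under (1) this comes directly from the hypothesis. Under (2)--(4) I first obtain integrability of $u$ via Sobolev embedding, via the pressure equation $-\Delta\Pi=\partial_i\partial_j(u_iu_j)-\Div F$, and via the energy bound $u\in L^\infty L^2\cap L^2\dot H^1$. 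The restrictions $q\ge 24/7$, $q\ge 8/5$, $q\ge 9/4$ and $q\ge 9/7$ are where I expect the needed exponent to be reached: $\int_0^T\|u\|_{L^{r}}$ must be finite with $r$ large enough that $n\in L^\infty L^{3}$ (or so) and $j\in L^\infty L^2$. Then $\|F\|_{L^2_x}\le \|n\|_{L^3}\|u\|_{L^6}+\|j\|_{L^2}\lesssim \|\nabla u\|_{L^2}+1$.

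The second step is the $H^1$ estimate. Multiply the momentum equation by $-\Delta u$. The convective term is treated exactly as in the classical LPS proofs. Under (1), Hölder, Gagliardo--Nirenberg and Young give $|\int u\cdot\nabla u\cdot\Delta u|\le \frac14\|\Delta u\|_2^2+C\|u\|_{L^q}^p\|\nabla u\|_2^2$. Under (2) one uses $\int \partial u\,\partial u\,\partial u$. Under (3) and (4) one writes $\int u\cdot\nabla u\cdot\Delta u$ via $\int \Pi\, u\cdot\nabla|u|^2$-type identities (the Berselli--Galdi/Zhou approach), with the extra contribution of $F$ in the pressure absorbed using Step 1. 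The forcing term gives $\|F\|_2\|\Delta u\|_2\le\frac14\|\Delta u\|_2^2+C(\|\nabla u\|_2^2+1)$. Grönwall then yields $u\in L^\infty H^1\cap L^2H^2$.

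The third step upgrades to $H^2$ and closes the kinetic estimates. I apply $\partial_t$, or equivalently test with $\Delta^2u$, to get $u\in L^\infty H^2\cap L^2H^3$. This needs $\nabla F\in L^2_tL^2_x$, which is controlled by $\|\langle v\rangle^kDf\|_{L^2_{x,v}}$ with $k>3/2$ via Cauchy--Schwarz in $v$. Inequality \eqref{pri2} with $p=2$ gives Grönwall control of $\|\langle v\rangle^kDf\|$ provided $\int_0^T(\|u\|_\infty+\|\nabla u\|_\infty)<\infty$. This is the main obstacle, since $u\in L^2H^3$ only barely fails to give $\nabla u\in L^1L^\infty$ in 3D; $L^2H^3\subset L^2W^{1,\infty}$ does hold, so it is fine. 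The argument must therefore be run as a single coupled Grönwall inequality for $Y(t)=\|u\|_{H^2}^2+\|\langle v\rangle^kDf\|_{L^2}^2$, using $\|\nabla F\|_2\lesssim (1+\|u\|_{W^{1,\infty}})\|\langle v\rangle^kDf\|_2$ and $\|u\|_{W^{1,\infty}}\lesssim\|u\|_{H^2}^{1/2}\|u\|_{H^3}^{1/2}$. Finally, \eqref{pri3} with $n=3$ requires $\partial^2u\in L^4$. This is handled through a Bihari-type inequality $y'\le a(t)y^{7/8}+b(t)y$, which is integrable since $a=\|\nabla^2u\|_{L^4}\in L^{1}_t$ follows from $u\in L^2H^3$. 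This yields the $D^2f$ bound and hence a strong solution on $(0,T]$.
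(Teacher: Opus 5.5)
\textbf{Step 3 does not close.} The term $\|\nabla u\|_{L^\infty_x}Z$ in \eqref{pri2}, with $Z=\|\langle v\rangle^kDf\|_{L^2_{x,v}}^2$, cannot be avoided, since it comes from $\partial_iu\cdot\nabla_vf$. In your $L^2$-based scheme, $\|\nabla u\|_{L^\infty_x}$ is controlled only through $\|u\|_{H^2}^{1/2}\|u\|_{H^3}^{1/2}$. But $\|u\|_{H^3}$ is controlled only through $\|\nabla F\|_{L^2}$, that is, through $Z$ again.

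\begin{itemize}
\item If you absorb $\|u\|_{H^3}$ into the dissipation by Young, you are left with $\|u\|_{H^2}^{2/3}Z^{4/3}\lesssim Y^{5/3}$. Your bound $\|\nabla F\|_2^2\lesssim(1+\|u\|_{W^{1,\infty}}^2)Z$ even produces $Y^3$. So the coupled inequality for $Y$ is superlinear and gives only a local-in-time bound.
\item If you instead use $\|\nabla u\|_{L^\infty}$ as a Gr\"onwall coefficient, you get $Z(t)\le Z(0)\exp\bigl(C(1+\int_0^t bZ\,ds)^{1/4}\bigr)$ with $b=1+\|u\|_{L^\infty}^2$. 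This does not exclude blow-up either.
\end{itemize}

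The remark that ``$L^2H^3\subset L^2W^{1,\infty}$, so it is fine'' is circular. The missing idea is to obtain $\nabla u\in L^1(0,T;L^\infty)$ \emph{before}, and independently of, any derivative of $f$; this is Lemma~\ref{lemma4.2}.
\begin{itemize}
\item Once the $H^1$ step gives $u\in L^2(0,T;L^\infty)$, the zeroth-order moment bound \eqref{3.2} yields $n,j\in L^\infty(0,T;L^{r})$ for some $r>3$.
\item Interpolation gives $u\cdot\nabla u\in L^s_tL^r_x$ with $s>1$.
\item Giga--Sohr maximal $L^s_tL^r_x$ regularity for the Stokes system then gives $\nabla^2u\in L^sL^r$, hence $\nabla u\in L^1L^\infty$.
\end{itemize}
Only then is \eqref{pri2} a linear Gr\"onwall inequality. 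After that, the $u_t$ and $H^2$ bounds, $\nabla^2u\in L^2L^4$, and \eqref{pri3} follow.

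\textbf{Step 1 is incomplete in the lower range.} To get $j\in L^\infty L^2$, which you need for $\int j\cdot\Delta u$, you need $\int_0^T\|u\|_{L^8}\,ds<\infty$; your $\|n\|_{L^3}$ bound even needs $L^9$. For $24/7\le q<8$, neither the hypothesis nor $u\in L^\infty L^2\cap L^2L^6$ reaches $L^8$. So this does not come ``directly from the hypothesis''. As written, your argument covers only $q\ge 8$ in (1) and $q\ge 24/11$ in (2).

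The threshold $24/7$ comes from a bootstrap you have not included:
\begin{itemize}
\item Interpolation gives $u\in L^2L^{8q/(q+2)}$, so \eqref{3.2} gives $j\in L^\infty L^{2q/(q+2)}$.
\item Also $u\cdot\nabla u+nu\in L^{2q/(2q-3)}L^{2q/(q+2)}$.
\item Giga--Sohr then gives $\nabla^2u\in L^{2q/(2q-3)}L^{2q/(q+2)}$.
\item Sobolev/Gagliardo--Nirenberg reaches $L^8$ from this precisely when $q\ge 24/7$, giving $u\in L^1L^8$.
\end{itemize}
Bounding $\|nu\|_2\le\|n\|_2\|u\|_\infty$ with Gagliardo--Nirenberg, as the paper does, avoids needing $n\in L^3$.

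Your treatment of (3)--(4) is also not a workable mechanism. The pressure drops out of the $H^1$ identity, and the elliptic equation for $\Pi$ cannot be inverted to give integrability of $u$. What works is the $L^3$ estimate with multiplier $u|u|$. There the coupling needs only $n\in L^4L^2$ and $j\in L^2L^{3/2}$, which are available from $u\in L^2L^6$ and $f_0|v|^3\in L^1$. This yields $u\in L^3L^9$ and reduces (3)--(4) to (1) with $q=9$.
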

We present several comments regarding Theorem \ref{the1.1}.

The proof of this theorem relies on the moment propagation in Theorem    \ref{newthe1.1} of the Vlasov equation $\eqref{Vlasov}$ and Giga and Sohr's $L_{t}^{p}L_{x}^{q}$ estimates of the time-dependent Stokes equations.
Our first observation is that the conditions $u\in L^{1}(0,T;L^{8}(\mathbb{R}^{3}))$ and $f_{0}|v|^{3}\in L^{1}(\mathbb{R}^{3}\times\mathbb{R}_{v}^{3})$ imply that the particle density and momentum satisfy $n\in L^{\infty}(0,T;L^{2}(\mathbb{R}^{3}))$ and $j\in L^{\infty}(0,T;L^{2}(\mathbb{R}^{3}))$ by moment propagation \eqref{geHamdache's}.
This enables us to get $H^{1}(\mathbb{R}^{3})$ estimates for the fluid velocity from equations $\eqref{NSV}_1$, leading in turn to $u\in L^{2}(0,T;L^{\infty}(\mathbb{R}^{3}))$.
By using moment propagation \eqref{geHamdache's} once more, we can enhance the regularity of both the particle density $n$ and the mean velocity $j$.
Using the $L_{t}^{p}L_{x}^{q}$ estimates of the non-stationary Stokes equations developed by Giga and Sohr, we obtain the key estimate $\nabla u\in L^{1}(0,T;L^{\infty}(\mathbb{R}^{3}))$.
This estimate allows us to get  the analysis of the gradient of the particle distribution function $f$ by moment propagation \eqref{pri2}, leading to the conclusion $\nabla^{2} u\in L^{2}(0,T;L^{4}(\mathbb{R}^{3}))$ via elliptic estimates for the stationary Stokes system.
Based on this, we can investigate the moments propagation of $D^{2} f$   by higher
moment estimate  \eqref{pri3}.
Finally, using the steady Stokes estimate once again, we deduce the desired estimate $\nabla^{2} \partial_{i}u,\partial_{i}\Pi\in L^{2}(0,T;L^{2}(\mathbb{R}^{3}))$. In this process, it seems that it is necessary to deduce the moments estimate $j\in L^{\infty}(0,T;L^{2}(\mathbb{R}^{3}))$ from $u\in L^{\f{2q}{q-3}}(0,T;L^{q}(\mathbb{R}^{3}))$ with $q\geq8$.
Relaxing the lower bound on $q$ is a challenge.
To address this, our second observation is that the time-dependent Stokes equations
\be
\partial_t v - \Delta v +\nabla \Pi = j \ee
have regularity $\nabla^{2}v\in L^{2}(0,T;L^{\f32}(\mathbb{R}^{3}))$ provided $j \in L^{2}(0,T;L^{\f32}(\mathbb{R}^{3}))$.
The Sobolev embedding combined with the Gagliardo-Nirenberg inequality implies $v\in L^{2}(0,T;L^{\tilde{q}}(\mathbb{ R}^3)$ with $\tilde{q}<\infty.$ Based on this, we reformulate the velocity equations as
\be
 \partial_t u   - \Delta u +\nabla \Pi =-u \cdot \nabla u-nu+j=hu+j
\ee
 where $h\in L^{2}(0,T;L^{2}(\mathbb{R}^{3}))$.
By applying Giga-Sohr's $L_{t}^{p}L_{x}^{q}$ estimates of time depended Stokes equations, we derive $u\in L^{1}(0,T;L^{8}(\mathbb{R}^{3}))$ under the weaker assumption
\[
u\in L^{\f{2q}{q-3}}(0,T;L^{q}(\mathbb{R}^{3}))
\quad \text{with} \quad
\f{24}{7} \leq q<8.
\]
A natural direction for future work is to improve the lower bound on the spatial Lebesgue exponent $q$ in Theorem \ref{the1.1}.

Since the Navier-Stokes-Vlasov system \eqref{NSV} become the
Navier-Stokes-Fokker-Planck equations by adding a diffusion term $-\Delta_{v}f$ in the Vlasov equations $\eqref{NSV}_{2}$, with a slight modification of the proof, Theorem \ref{the1.1} can be adapted to the 3D Navier-Stokes-Vlasov-Fokker-Planck equations.
In addition, the reader is referred to \cite{[YXW]} for an application of moment estimates \eqref{pri1} in the study of energy equality of the weak solutions of the Navier-Stokes-Vlasov equations \eqref{NSV}.

The paper is organized as follows.
In Section 2, we introduce the notations and analytical tools used throughout the paper.
Section 3 is devoted to the proof of the moment propagation of the Vlasov equation  \eqref{Vlasov} via a series of lemmas.
In Section 4, we consider the regularity of weak solutions of the Navier-Stokes-Vlasov system via moment estimates proved in previous section.
Ladyzhenskaya-Prodi-Serrin-type regularity criteria involving only
the velocity field for weak solutions of this system is derived.
   Finally, an appendix is dedicated
to  an analogue of  moment estimate \eqref{geHamdache's} via the measure $\langle v\rangle dvdx $ and higher moment propagation in the Vlasov equation  \eqref{Vlasov}.

\section{Preliminaries}

We begin with the notations used throughout this paper.
For $q\in [1,\,\infty]$, the space $L^{q}(0,\,T;\,X)$ denotes the set of measurable functions on the time interval $(0,\,T)$ taking values in a Banach space $X$ such that $\|g(t,\cdot)\|_{X} \in L^{q}(0,\,T)$. Let $\alpha=(\alpha_{1},\cdots, \alpha_{n})$ and $\beta=(\beta_{1},\cdots, \beta_{n})$, where $\alpha_{i}$ and $\beta_{i}$ are nonnegative integer.
We write
$$D^{s}f(x,v)= \partial_{x}^{\alpha}\partial_{v}^{\beta}f(x,v)=\partial_{x_{1}}^{\alpha_{1}}\cdots
\partial_{x_{n}}^{\alpha_{n}}\partial_{v_{1}}^{\beta_{1}}\cdots
\partial_{v_{n}}^{\beta_{n}}f(x,v)$$
and
$$
\|D^{s}f(x,v)\|_{L^{p}(\mathbb{R}^{n}\times\mathbb{R}_{v}^{n})}
=\sum_{|\alpha|+|\beta|=s}\|\partial_{x}^{\alpha}\partial_{v}^{\beta}
f(x,v)\|_{L^{p}(\mathbb{R}^{n}\times\mathbb{R}_{v}^{n})},
$$
where
$|\alpha|+|\beta|=\alpha_{1}+\cdots+\alpha_{n}+\beta_{1}+\cdots+\beta_{n}=s.$

We next provide the definitions of weak solutions and strong solutions to the Navier-Stokes-Vlasov system \eqref{NSV}, respectively.
\begin{definition}
[Weak solution]
\label{wsdefi}
A pair $(u, \,f)$ is called a weak solution to the Navier--Stokes--Vlasov system \eqref{NSV} if the following conditions hold:
\begin{itemize}%\addtolength{\itemsep}{-0.2\baselineskip}
\item $u\in L^{\infty}(0,T;L^{2}(\mathbb{R}^{3}))\cap L^{2}(0,T;H^{1}(\mathbb{R}^{3}))$;
\item $f(t,x, v)\geq 0, \text{ for any } (t,x,v)\in (0,T)\times\mathbb{ R}^3\times\mathbb{ R}_{v}^3$;
\item $f\in L^{\infty}(0,T;L^{\infty}(\mathbb{ R}^3\times\mathbb{ R}_{v}^3)\cap L^{1}(\mathbb{ R}^3\times\mathbb{ R}_{v}^3))$;
\item $f|v|^2\in L^{\infty}(0,T;L^{1}(\mathbb{ R}^3\times\mathbb{ R}_{v}^3))$;
 \item
The functions $u$ and $f$ solve the Navier--Stokes--Vlasov equations \eqref{NSV} in the sense of distributions;
\be\label{ws1}
\begin{aligned}
&\int_{\mathbb{R}^{3}}(u \cdot \Psi)(T, x) d x+\int_{0}^{T} \int_{\mathbb{R}^{3}}\left(\nabla u: \nabla \Psi-u \otimes u: \nabla \Psi-u \cdot \partial_{t} \Psi\right)(s, x) d x d s \\
&=
\int_{\mathbb{R}^{3}} u_{0}(x) \Psi(0, x) d x+\int_{0}^{T} \iint_{\mathbb{R}^{3}\times\mathbb{R}_{v}^{3}} (v-u) f d v \Psi(s, x) d x d s
\end{aligned}
\ee
with $\nabla u: \nabla \Psi=\sum_{j, k=1}^{3} \partial_{j} u^{k} \partial_{j} \Psi^{k}$ and $u \otimes u: \nabla \Psi=\sum_{j, k=1}^{3} u^{j} u^{k} \partial_{j} \Psi^{k}$,
\be\label{ws2}
\int_{0}^{T} \iint_{\mathbb{R}^{3} \times \mathbb{R}^{3}_{v}} f\left(\partial_{t} \varphi+v \cdot \nabla_{x} \varphi+(u-v) \cdot \nabla_{v} \varphi\right) d x d v d t=\iint_{\mathbb{R}^{3} \times \mathbb{R}^{3}_{v}} f_{0} \varphi(0, x, v) d x d v
\ee
for all $\Psi \in C^{1} (0,T; (C_{c}^{\infty} (\mathbb{R}^{3} ) )^{3} )$ with $\nabla \cdot \Psi=0$ and $\varphi \in C^{1} (0,T; C_{c}^{\infty} (\mathbb{R}^{3} \times \mathbb{R}_{v}^{3} ) )$ with $\varphi(T, \cdot, \cdot)=0$.\\

\item
The functions $u$ and $f$ satisfy the energy inequality
\be \label{ei}
\ba
&\int_{\mathbb{ R}^3}\frac{1}{2}|u|^2\;dx+\iint_{\mathbb{ R}^3\times\mathbb{ R}_{v}^3}f |v|^2 \;dx dv
+ \int_{0}^T\int_{\mathbb{ R}^3}|\nabla u|^2\,dxdt+\int_{0}^{T}\iint_{\mathbb{ R}^3\times\mathbb{ R}^3_{v}}f|u-v|^2\,dv dx dt \\
\leq&
\int_{\mathbb{ R}^3}\frac{1}{2}|u_0|^2\;dx+\iint_{\mathbb{ R}^3\times\mathbb{ R}^3_{v}} f_0 | v|^2 \;dxdv
\ea
\ee
holds for almost every $t \in [0,T]$.
\end{itemize}
\end{definition}
The existence of weak solutions to the Navier-Stokes-Vlasov system \eqref{NSV} under initial data satisfying $u_{0}\in L^{2}(\Omega)$, $f_{0}\geq0$, $f_{0}\in L^{\infty}(\Omega\times\mathbb{ R}_{v}^d)\cap L^{1}(\Omega\times\mathbb{ R}_{v}^d)$, $f_{0}|v|^2\in L^{\infty}(0,T;L^{1}(\Omega\times\mathbb{ R}_{v}^d))$ can be found in \cite{[Yu],[BDGM],[HMMM],[BGM]}, where $\Omega\subset \mathbb{R}^{d}$ with $d=2, 3$ is torus, bounded domain, a time-dependent domain or the whole space.

The strong solution to the Navier-Stokes-Vlasov system \eqref{NSV} is defined as follows:

\begin{definition}(\cite{[CK]})\label{ss}
A pair $(u,f)$ is called a strong solution of the Navier-Stokes-Vlasov system \eqref{NSV} if it satisfies the equations in the sense of distributions, and fulfills the following regularity conditions:
\begin{itemize}%\addtolength{\itemsep}{-0.2\baselineskip}
\item
$f\in C([0,T]; H^{2}(\mathbb{ R}^3\times\mathbb{ R}_{v}^3))$,

\item
$u\in C([0,T]; H^{2}(\mathbb{ R}^3))\cap L^{2}([0,T]; H^{3}(\mathbb{ R}^3))$,

\item
$u_{t}\in C([0,T];L^{2}(\mathbb{ R}^3))\cap L^{2}([0,T];H^{1}(\mathbb{ R}^3))$,

\item
$\nabla \Pi\in C([0,T];L^{2}(\mathbb{ R}^3))\cap L^{2}([0,T];H^{1}(\mathbb{ R}^3))$.
\end{itemize}
\end{definition}

For the reader's convenience, we present the following kinetic interpolation inequality, which is widely  used in the analysis of Vlasov type equations (see \cite{[PLions],[CKL],[MV]}).
\begin{lemma}
Let $g \in L^{p}(\mathbb{R}^{d}\times \mathbb{R}_{v}^{d})$ and $g|v|^{\ell}\in L^{1}(\mathbb{R}^{d}\times \mathbb{R}_{v}^{d})$.
Then for $k\leq\ell$ and $g\geq0$, we have $\int_{\mathbb{R}_{v}^{d}}g|v|^{k}dv \in L^{q}(\mathbb{R}^{d})$ with the estimate
\be\ba \label{2.2} &
\B\|\int_{\mathbb{R}_{v}^{d}}
g |v|^{k} dv\B\|_{L^{q}(\mathbb{R}^{d})}\leq C\|g\|_{L^{p}(\mathbb{R}^{d}\times \mathbb{R}_{v}^{d})}^{\f{\ell-k}{d(1-\f1p)+\ell}}
\|g|v|^{\ell}\|_{L^{1}(\mathbb{R}^{d}\times \mathbb{R}_{v}^{d})}^{\f{d(1-\f1p)+k}{d(1-\f1p)+\ell}},\\
&\B\|\int_{\mathbb{R}_{v}^{d}}
g \langle v\rangle^{k} dv\B\|_{L^{q}(\mathbb{R}^{d})}\leq C\|g\|_{L^{p}(\mathbb{R}^{d}\times \mathbb{R}_{v}^{d})}^{\f{\ell-k}{d(1-\f1p)+\ell}}
\|g\langle v\rangle^{\ell}\|_{L^{1}(\mathbb{R}^{d}\times \mathbb{R}_{v}^{d})}^{\f{d(1-\f1p)+k}{d(1-\f1p)+\ell}},
\ea
\ee
where the exponent $q$ is determined by
$\f1q=\f{d(1-\f1p)+k}{d(1-\f1p)+\ell}+\f{\f{\ell-k}{d(1-\f1p)+\ell}}{p}$.\\
Moreover, if $g \in L^{\infty}(\mathbb{R}^{d}\times \mathbb{R}_{v}^{d})$, then
\be\ba \label{2.5v3} \B\|\int_{\mathbb{R}_{v}^{d}}
g |v|^{k} dv\B\|_{L^{q}(\mathbb{R}^{d})}\leq C\|g\|_{L^{\infty}(\mathbb{R}^{d}\times \mathbb{R}_{v} ^{d})}^{\f{\ell-k}{d +\ell}}
\|g|v|^{\ell}\|_{L_{x}^{\f{q(d +k)}{d +\ell}}( \mathbb{R}^{d})L_{v}^{1}( \mathbb{R}_{v}^{d})}^{\f{d +k}{d +\ell}}.
\ea
\ee
\end{lemma}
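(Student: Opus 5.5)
The plan is to prove all three estimates from one pointwise-in-$x$ inequality. It comes from splitting the velocity integral at an $x$-dependent radius, optimizing the radius, and then applying Hölder's inequality in $x$. Write $p'=p/(p-1)$, so $d/p'=d(1-\f1p)$. For fixed $x$ set
$$a(x)=\|g(x,\cdot)\|_{L^{p}_{v}},\qquad m_{k}(x)=\int_{\mathbb{R}^{d}_{v}} g|v|^{k}dv,$$
and define $m_\ell(x)$ the same way.

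\textbf{Step 1 (splitting).} For any $R>0$ we split
$$m_{k}(x)=\int_{|v|<R}g|v|^{k}dv+\int_{|v|\ge R}g|v|^{k}dv .$$
On the ball we apply Hölder in $v$. This gives $\int_{|v|<R}g|v|^{k}dv\le a(x)\,\bigl(\int_{|v|<R}|v|^{kp'}dv\bigr)^{1/p'}=C\,a(x)R^{k+d/p'}$. Outside the ball we use $|v|^{k}\le R^{k-\ell}|v|^{\ell}$, which holds because $k\le\ell$. This bounds the tail by $R^{k-\ell}m_{\ell}(x)$. Next we choose $R$ with $R^{\ell+d/p'}=m_\ell(x)/a(x)$ to balance the two terms. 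With
$$\theta=\f{\ell-k}{d(1-\f1p)+\ell},$$
the result is
$$m_{k}(x)\le C\,a(x)^{\theta}\,m_{\ell}(x)^{1-\theta},\qquad 1-\theta=\f{d(1-\f1p)+k}{d(1-\f1p)+\ell}.$$
The degenerate cases $a(x)=0$ or $m_\ell(x)=0$ are trivial, since then $g(x,\cdot)=0$ a.e.

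\textbf{Step 2 (integration in $x$).} We take the $L^{q}_{x}$ norm of the pointwise bound and use Hölder with exponents $p/\theta$ and $1/(1-\theta)$:
$$\|a^{\theta}m_\ell^{1-\theta}\|_{L^{q}}\le \|a\|_{L^{p}_{x}}^{\theta}\,\|m_\ell\|_{L^{1}_{x}}^{1-\theta}.$$
This is valid exactly when $\f1q=\f{\theta}{p}+(1-\theta)$, which is the stated relation for $q$. Finally $\|a\|_{L^{p}_{x}}=\|g\|_{L^{p}_{x,v}}$ and $\|m_\ell\|_{L^{1}_x}=\|g|v|^{\ell}\|_{L^{1}_{x,v}}$, which proves the first line of \eqref{2.2}.

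\textbf{Step 3 (the $\langle v\rangle$ version).} This is the one point that needs care, because $\langle v\rangle^{k}$ is not homogeneous. If the optimal radius satisfies $R\ge1$, we have $\int_{|v|<R}\langle v\rangle^{kp'}dv\le CR^{kp'+d}$ and $\langle v\rangle^{k}\le R^{k-\ell}\langle v\rangle^{\ell}$ on $|v|\ge R$. So Step 1 goes through verbatim with $m_k,m_\ell$ defined using $\langle v\rangle$ in place of $|v|$. If instead $R<1$, then $m_\ell(x)<a(x)$, and since $\langle v\rangle\ge1$ we get directly
$$m_k\le m_\ell=m_\ell^{\theta}m_\ell^{1-\theta}\le a^{\theta}m_\ell^{1-\theta}.$$
In both cases the same pointwise inequality holds, and Step 2 then gives the second line of \eqref{2.2}.

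\textbf{Step 4 (the $L^\infty$ estimate \eqref{2.5v3}).} For $p=\infty$, Step 1 gives pointwise $m_k(x)\le C\|g\|_{L^{\infty}_{x,v}}^{\f{\ell-k}{d+\ell}}m_\ell(x)^{\f{d+k}{d+\ell}}$. Taking the $L^{q}_x$ norm of this bound, we use the identity
$$\|m_\ell^{\f{d+k}{d+\ell}}\|_{L^{q}}=\|m_\ell\|_{L^{\f{q(d+k)}{d+\ell}}_{x}}^{\f{d+k}{d+\ell}}.$$
Since $m_\ell$ is the $L^1_v$ norm of $g|v|^{\ell}$, this is exactly \eqref{2.5v3}.

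The only real obstacle is the non-homogeneity in Step 3, which is handled by the case split on $R$. Everything else is the standard optimization argument.
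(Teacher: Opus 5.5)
Your proof is correct and follows the same route as the paper: you split the $v$-integral at a radius $R$, optimize to obtain the pointwise bound \eqref{2.8}, and then apply H\"older in $x$ (or take the $L^q_x$ norm directly for \eqref{2.5v3}). Two of your steps are more careful than the paper's text: the balancing choice $R^{\ell+d(1-\frac1p)}=m_\ell(x)/a(x)$, where the paper's choice $R=(\int g|v|^{\ell}dv)^{1/(d(1-\frac1p)+\ell)}$ drops the factor $\|g(x,\cdot)\|_{L^p_v}$, and the case split $R<1$ for the weight $\langle v\rangle$, a regime $m_\ell(x)<a(x)$ that the paper's ``same minimization'' does not by itself cover.
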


\begin{remark}
As a special case of the preceding lemma, we have the estimate
\be\ba\label{2.3}
\B\|\int_{\mathbb{R}^{d}_{v}}
g |v|^{k} dv\B\|_{L ^{\f{d +\ell}{d +k}}(\mathbb{R}^{d})} \leq C \|g\|_{L^{\infty}  (\mathbb{R}^{d}\times \mathbb{R}_{v}^{d})}^{\f{\ell-k}{d +\ell}}\|g|v|^{\ell}\|_{L^{1}(\mathbb{R}^{d}\times \mathbb{R}_{v}^{d})}^{\f{d +k}{d +\ell}},
\ea\ee
which will be frequently used throughout this paper. Furthermore, the borderline cases considered in \cite[Lemma 9]{[CKL]} and \cite[Lemma 3.2]{[MV]} can be recovered from \eqref{2.3}.
\end{remark}

\begin{remark}
Since $|v|\leq\langle v\rangle$, one may replace $|v|$ by $\langle v\rangle$ on the right-hand side of $\eqref{2.2}_{1}$ $\eqref{2.5v3}$, and $\eqref{2.3}$.
\end{remark}

\begin{proof}
For any $R > 0$, we compute
   \be\ba \label{2.4} \int_{\mathbb{R}^{d}}
  g  |v|^{k  }  dv=&\int_{|v|\leq R}
  g  |v|^{k  }  dv+\int_{|v|> R}
  g  |v|^{k  }  dv\\\leq&CR^{k  }\int_{|v|\leq R}
  g    dv +R^{-(\ell-k)}\int_{  |v|> R}
  g  |v|^{\ell  }  dv\\\leq&CR^{k  +(1-\f1p)d}\B(\int_{|v|\leq R}
  g^{p}    dv\B)^{\f1p}+R^{-(\ell-k)}\int_{  \mathbb{R}^{d}}
  g  |v|^{\ell  }  dv\\\leq&CR^{k  +(1-\f1p)d}\B(\int_{\mathbb{R}^{d}}
  g^{p}    dv\B)^{\f1p}+R^{-(\ell-k)}\int_{  \mathbb{R}^{d}}
  g  |v|^{\ell  }  dv.
  \ea\ee
 To minimize the right-hand side, we choose $R = (\int_{ \mathbb{R}_{v}^{d}} g |v|^{\ell} dv)^{\f{1}{d(1-\f1p)+\ell}}$, which yields
\be\ba\label{2.8}
\int_{\mathbb{R}_{v}^{d}}
  g  |v|^{k  }  dv \leq C \|g\|_{L^{p}(\mathbb{R}_{v}^{d})}^{\f{\ell-k}{d(1-\f1p)+\ell}}
  \|g|v|^{\ell}\|_{L^{1}(\mathbb{R}_{v}^{d})}
  ^{\f{d(1-\f1p)+k}{d(1-\f1p)+\ell}}.
\ea
\ee
The classical interpolation inequality in Lebesgue spaces implies $\eqref{2.2}_{1}$.

A slight modification of the argument used in \eqref{2.4} yields
\be\ba \label{2.9} \int_{\mathbb{R}^{d}}
  g  \langle v\rangle^{k  }  dv=&\int_{|v|\leq R}
  g  \langle v\rangle^{k  }  dv+\int_{|v|> R}
  g  \langle v\rangle^{k  }  dv\\\leq&C(1+R^{2})^{\f{k}{2}  }\int_{|v|\leq R}
  g    dv +(1+R^{2})^{\f{-(\ell-k)}{2}  } \int_{  |v|> R}
  g  \langle v\rangle^{\ell  }  dv\\\leq&C(1+R^{2})^{\f{k+(1-\f1p)d}{2}    }\B(\int_{|v|\leq R}
  g^{p}    dv\B)^{\f1p}+(1+R^{2})^{\f{-(\ell-k)}{2}  }\int_{  \mathbb{R}_{v}^{d}}
  g  \langle v\rangle^{\ell  }  dv\\\leq&C(1+R^{2})^{\f{k+(1-\f1p)d}{2}    }\B(\int_{\mathbb{R}_{v}^{d}}
  g^{p}    dv\B)^{\f1p}+(1+R^{2})^{\f{-(\ell-k)}{2}  }\int_{  \mathbb{R}_{v}^{d}}
  g \langle v\rangle^{\ell  }  dv.
\ea\ee
Repeating the same minimization argument as above, we obtain the estimate $\eqref{2.2}_{2}$.

 It follows from
 \eqref{2.8} that
$$
\ba
\int_{\mathbb{R}_{v}^{d}}
g |v|^{k} dv
\leq C \|g\|_{L^{\infty}(\mathbb{R}_{v}^{d})}^{\f{\ell-k}{d +\ell}}
\|g|v|^{\ell}\|_{L^{1}(\mathbb{R}_{v}^{d})}
^{\f{d +k}{d +\ell}},
\ea
$$
from which it follows that   \eqref{2.5v3}  immediately.
\end{proof}

The following result highlights the advantage of working with the weighted measure $\langle v\rangle dvdx$.

\begin{lemma}
\label{lem2.2}
Let $g\geq0$, $r\geq0$, and $k>r+\f{d(p-1)}{p}$.
If $\langle v\rangle^{k}g\in L^{p}(\mathbb{R}^{d}\times\mathbb{R}_{v}^{d})$, then
\be\ba \label{2.10}
\B\| \int_{\mathbb{R}_{v}^{d}} g |v|^{r} dv\B\|_{L^{p}(\mathbb{R}^{d})}\leq C\|\langle v\rangle^{k}g \|_{L^{p}(\mathbb{R}^{d}\times\mathbb{R}_{v}^{d})}.
\ea\ee
where the constant $C > 0$ depends only on $d$, $p$, $r$, and $k$.
\end{lemma}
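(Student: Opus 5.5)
The plan is a pointwise-in-$x$ Hölder inequality in the velocity variable, followed by integration in $x$. Fix $x\in\mathbb{R}^{d}$ and write
\[
g|v|^{r}=\big(\langle v\rangle^{k}g\big)\,\big(|v|^{r}\langle v\rangle^{-k}\big).
\]
Since $|v|\le\langle v\rangle$, the second factor is bounded by $\langle v\rangle^{r-k}$. Let $p'=\f{p}{p-1}$ be the conjugate exponent. Hölder's inequality in $v$ then gives
\[
\int_{\mathbb{R}_{v}^{d}} g|v|^{r}\,dv\le \B(\int_{\mathbb{R}_{v}^{d}}\langle v\rangle^{-(k-r)p'}dv\B)^{\f1{p'}}\B(\int_{\mathbb{R}_{v}^{d}}\langle v\rangle^{kp}g^{p}\,dv\B)^{\f1p}.
\]
Here the nonnegativity of $g$ is used only to ensure that the $v$-integral is a genuine nonnegative quantity, so that no absolute values are needed.

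The key step is the finiteness of the weight integral. The integral $\int_{\mathbb{R}^{d}}\langle v\rangle^{-(k-r)p'}dv$ converges exactly when $(k-r)p'>d$. This is equivalent to $k-r>\f{d}{p'}=\f{d(p-1)}{p}$, which is precisely the hypothesis $k>r+\f{d(p-1)}{p}$. The resulting constant $C=\|\langle v\rangle^{r-k}\|_{L^{p'}_{v}}$ depends only on $d,p,r,k$.

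Finally, raise the pointwise inequality to the $p$-th power and integrate in $x$. This gives
\[
\B\|\int_{\mathbb{R}_{v}^{d}} g|v|^{r}dv\B\|_{L^{p}(\mathbb{R}^{d})}^{p}\le C^{p}\iint_{\mathbb{R}^{d}\times\mathbb{R}_{v}^{d}}\langle v\rangle^{kp}g^{p}\,dv\,dx,
\]
which is \eqref{2.10}.

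The endpoint $p=1$ needs separate treatment, since then $p'=\infty$. In that case the condition reads $k>r$, hence $k\ge r$, and the bound $|v|^{r}\le\langle v\rangle^{k}$ is immediate pointwise, so the estimate holds directly.

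There is no real obstacle in this argument. The only point requiring care is confirming that the integrability threshold for the weight matches the stated strict inequality.
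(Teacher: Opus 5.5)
Your proposal is correct and takes essentially the same approach as the paper. Both bound $|v|^{r}\le\langle v\rangle^{r}$ and apply H\"older in $v$ against the weight $\langle v\rangle^{-(k-r)p/(p-1)}$, which is integrable exactly under $k>r+\frac{d(p-1)}{p}$, and then take the $L^{p}$ norm in $x$; your separate handling of the endpoint $p=1$ is a small detail the paper leaves implicit.
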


\begin{proof}
We use $|v|\leq\langle v\rangle$ and the H\"older inequality to estimate
\be\ba
\int_{\mathbb{R}_{v}^{d}} g |v|^{r} dv
\leq
\int_{\mathbb{R}_{v}^{d}} g \langle v\rangle^{r} dv
\leq
\|\langle v\rangle ^{k} g \|_{L^{p}(\mathbb{R}_{v}^{d})} \B(\int_{\mathbb{R}_{v}^{d}}\f{1}{\langle v\rangle^{\f{p(k-r)}{p-1} }}dv\B)^{\f{p-1}{p}},
\ea\ee
where the last integral is finite under the assumption $k>r+\f{d(p-1)}{p}$.
Taking the $L^p$-norm in $x$ yields \eqref{2.10}.
\end{proof}
For the convenience of readers, we present   some Gr\"onwall type inequalities, which is a generalization of corresponding result  in \cite{[CKL]}.
\begin{lemma}\label{gronwall}
Let $T>0$ and nonnegative functions $f, g_{i}:[0, T] \rightarrow \mathbb{R}^{+}$ for $i=1, 2, 3$. Assume $g_{i}$ is integrable  and $$ f^{\prime}(t) \leqslant C_{1} g_{1}(t)f(t)+C_{2} g_{2}(t) f^{a}(t)+C_{3} g_{3}(t)$$ for $0 \leqslant a \leqslant 1$, where $C_{1}$, $C_{2}$ and $C_{3}$ are positive constants. We have the following:\begin{enumerate}[(1)]
\item
  If $a=1$, then
  \be\label{gronwall1}
  f(t)  \leq \exp^{\int_{0}^{t}[C_{1}g_{1}(s)+C_{2}g_{2}(s)]ds} \B[f(0)+C_{3}\int_{0}^{t}g_{3}(s) ds\B].\ee

  \item If $a=0$, then
  \be\label{gronwall2}
  f(t)  \leq \exp^{C_{1}\int_{0}^{t}g_{1}(s) ds} \B[f(0)+\int_{0}^{t}\B(C_{2}g_{2}(s)+C_{3}g_{3}(s)\B) ds\B].\ee
\item If $0 < a<1$ and $C_3=0$, then
\be\label{gronwall3}
f(t)\leq \exp^{C_{1}\int_{0}^{t}g_{1}(s) ds}\B[f^{1-a}(0)+(1-a)C_{2}\int_{0}^{t} g_{2}(s)ds\B]^{\f{1}{1-a}}.
\ee
    \end{enumerate}
\end{lemma}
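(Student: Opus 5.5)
The three cases all reduce to an ordinary linear (or Bernoulli-type) differential inequality. In each case I integrate it explicitly, using an integrating factor $E(t)=\exp\big(-C_1\int_0^t g_1(s)\,ds\big)$, or its analogue, which is absolutely continuous since $g_i\in L^1(0,T)$. Throughout I assume $f$ is absolutely continuous, so that $f'$ exists a.e. and the fundamental theorem of calculus applies. This is implicit in the statement.

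\emph{Case $a=1$.} The inequality reads $f'\le (C_1g_1+C_2g_2)f+C_3g_3$. Set $G(t)=\int_0^t(C_1g_1+C_2g_2)\,ds$ and multiply by $e^{-G(t)}$. This gives $\frac{d}{dt}\big(e^{-G}f\big)\le C_3g_3e^{-G}\le C_3g_3$, where the last step uses $G\ge0$ and $g_3\ge0$. Integrating over $(0,t)$ yields $f(t)\le e^{G(t)}\big[f(0)+C_3\int_0^tg_3\,ds\big]$, which is \eqref{gronwall1}.

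\emph{Case $a=0$.} Here $f^a\equiv1$, so the inequality becomes $f'\le C_1g_1f+(C_2g_2+C_3g_3)$. The same argument with $G=C_1\int_0^tg_1$ gives \eqref{gronwall2}, again using $e^{-G}\le1$ on the source term.

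\emph{Case $0<a<1$, $C_3=0$.} Let $F=e^{-G}f$ with $G=C_1\int_0^tg_1$. Then
\[
F'\le C_2g_2e^{-G}f^a=C_2g_2e^{-(1-a)G}F^a\le C_2g_2F^a .
\]
Next set $w=F^{1-a}$, so that formally $w'=(1-a)F^{-a}F'\le(1-a)C_2g_2$. Integrating gives $F(t)^{1-a}\le F(0)^{1-a}+(1-a)C_2\int_0^tg_2$. Raising to the power $\frac1{1-a}$ and multiplying by $e^{G}$ produces \eqref{gronwall3}, since $F(0)=f(0)$.

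The only delicate point is the division by $F^a$ where $F$ vanishes. I handle it by working with $F_\varepsilon=F+\varepsilon$. Because $F_\varepsilon'\le C_2g_2F^a\le C_2g_2F_\varepsilon^a$, the computation is rigorous for $F_\varepsilon^{1-a}$, which is absolutely continuous since $F_\varepsilon\ge\varepsilon>0$. Letting $\varepsilon\to0$ recovers the bound. This regularization is the main (minor) obstacle; the rest is routine integration.
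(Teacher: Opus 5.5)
Your proof is correct and follows the paper's argument essentially verbatim. You use the integrating factor $\exp\bigl(-\int_0^t(C_1g_1+C_2g_2)\,ds\bigr)$ in case (1), reduce case (2) to the linear case, and in case (3) substitute $F=f\exp\bigl(-C_1\int_0^t g_1\,ds\bigr)$ and integrate $F'\le C_2g_2F^a$. Your extra points (absolute continuity of $f$, the explicit use of $e^{-G}\le 1$, and the $F+\varepsilon$ regularization to avoid dividing by $F^a$ where $F$ vanishes) make rigorous steps the paper performs only formally.
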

\begin{proof}
(1) It is straightforward to show that
\be\ba
&\f{d}{d\tau}\B[f(\tau)\exp^{-\int_{0}^{\tau}[C_{1}g_{1}(s)+C_{2}g_{2}(s)]ds}\B]
\\=&\exp^{-\int_{0}^{\tau}[C_{1}g_{1}(s)+C_{2}g_{2}(s)]}[f'(\tau)-f (\tau)(C_{1}g_{1}(\tau)+C_{2}g_{2}(\tau))]
\\\leq & C_{3} g_{3}(\tau).
\ea\ee
  Integrate this inequality leads to \eqref{gronwall1}.

  (2) It is worth remarking that this case reduces to the first case.

(3) To proceed further, we denote
\be\ba
F(\tau)=f(\tau)\exp^{-C_{1}\int_{0}^{\tau}g_{1}(s)ds}.
\ea\ee
We compute

\be\ba
\f{d}{d\tau}F(\tau)=&[f'(\tau)-C_{1}g_{1}(\tau)f (\tau) ]\exp^{-C_{1}\int_{0}^{\tau}g_{1}(s)ds}\\
\leq& C_{2} g_{2}(\tau) f^{a}(\tau)\exp^{-C_{1}\int_{0}^{\tau}g_{1}(s)ds}.
\\
=&
C_{2} g_{2}(\tau)[f(\tau)\exp^{-C_{1}\int_{0}^{\tau}g_{1}(s)ds}]^{a}  \exp^{(a-1)C_{1}\int_{0}^{\tau}g_{1}(s)ds}\\
\leq&C_{2} g_{2}(\tau)F^{a}(\tau),
\ea\ee
which turns out that
$$
F^{1-a}(t)\leq F^{1-a}(0)+(1-a)C_{2} \int_{0}^{t}g_{2}(s)ds.
$$
The above analysis verifies  inequality \eqref{gronwall3}. This completes the proof of this lemma.
\end{proof}

\section{Moments propagation  in the Vlasov equations}

In this section, we aim to identify the minimal regularity required of the fluid velocity to ensure the propagation of moments in the particle equation.
 The proof of  Theorem \ref{newthe1.1} is given in a sequence of three lemmas.

\subsection{Basic moment estimates of the Vlasov equations}

In this subsection, we revisit Hamdache's  moment estimate \eqref{1.3} proved in \cite{[Hamdache]}.
We will derive a new moment estimate \eqref{03.2}, which recovers \eqref{1.3} via Young's inequality.

From the Vlasov equation $\eqref{NSV}_{2}$ and standard $L^{p}$ estimates, we obtain
$$
\frac{d}{d t}\|f\|_{L^{p}\left(\mathbb{R}^{d} \times \mathbb{R}^{d}_{v}\right)} \leqslant \frac{d(p-1)}{p}\|f\|_{L^{p}\left(\mathbb{R}^{d} \times \mathbb{R}^{d}_{v}\right)},$$
which implies the bound
\be\label{lwuqiong}
\|f(t)\|_{L^{\infty} (\mathbb{R}^{d} \times \mathbb{R}^{d}_{v} )} \leq C\|f_{0} \|_{L^{\infty} (\mathbb{R}^{d} \times \mathbb{R}^{d}_{v} )}.
\ee
In what follows, for the sake of conciseness, we write
$$L^{p}_{x}:=L^{p}(\mathbb{R}^{d}),~~ L^{p}_{x,v}:=L^{p}(\mathbb{R}^{d}\times\mathbb{R}^{d}_{v}).$$
We present the basic moment estimate for the Vlasov equation $\eqref{NSV}_{2}$.
\begin{lemma}\label{higermomentsestimate}
Let $(u,f)$ be a weak solution of the Vlasov equation $\eqref{NSV}_{2}$ on $\mathbb{R}^{d}\times \mathbb{R}^{d}_{v}$.
If the initial data $f_0$ satisfies
$$
\left\|| v|^{k} f_{0}\right\|_{L^{\ell}\left(\mathbb{R}^{d} \times \mathbb{R}_{v}^{d}\right)}<\infty, \quad \ell \geq1,
$$
and $u \in L^{1}(0,T;L^{k\ell+d }(\mathbb{R}^{d}))$, then
\be\label{03.1}
\B\|\| f |v|^{k}\|^{\ell}_{L_{x,v}^{\ell} }
\B\|_{L^{\infty}(0,T)}\leq e^{(d-\f{d}{\ell})T}\B[\|f_{0} |v|^{k } \|^{\ell}_{L^{\ell}_{x,v} } +C\|f_{0}^{\ell}\|_{L^{\infty}_{x,v} }\B(\int_{0}^{T}\|u\|_{L_{x}^{k\ell+d } }ds\B)^{k\ell+d}\B],
\ee
\be\ba\label{03.2}
\B\|\int_{\mathbb{R}_{v}^{d}}
f |v|^{p} dv\B\|_{L ^{\infty}(0,T;L_{x}^{\f{d +k}{d +p}})} \leq C \|f_{0}\|_{L^{\infty}_{ x,v}}^{\f{k-p}{d +k}}\B[ \|f_{0} |v|^{k } \| _{L^{1}_{x,v} }+ C \|f_{0} \|_{L^{\infty}_{x,v}} \B(\int_{0}^{T}\|u\|_{L_{x}^{k+d }}ds\B)^{k+d}\B]^{\f{d +p}{d +k} }.
\ea\ee
\end{lemma}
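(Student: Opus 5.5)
We propose to multiply the Vlasov equation by $\ell f^{\ell-1}|v|^{k\ell}$ and integrate over $\mathbb{R}^d\times\mathbb{R}^d_v$. The transport term $v\cdot\nabla_x f$ integrates to zero. For the force term, write $\nabla_v\cdot[f(u-v)] = (u-v)\cdot\nabla_v f - d f$. Integrating by parts in $v$ gives
\begin{align*}
\frac{d}{dt}\|f|v|^k\|_{L^\ell_{x,v}}^\ell
&= (d\ell - d)\iint f^\ell|v|^{k\ell}\,dv\,dx \\
&\quad + \iint f^\ell\, (u-v)\cdot\nabla_v|v|^{k\ell}\,dv\,dx .
\end{align*}
Since $(u-v)\cdot\nabla_v|v|^{k\ell}=k\ell|v|^{k\ell-2}(u\cdot v-|v|^2)$, the contribution of $-v$ is nonpositive and can be dropped. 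What remains is bounded by
\[
k\ell\int_{\mathbb{R}^d}|u|\int_{\mathbb{R}^d_v} f^\ell |v|^{k\ell-1}\,dv\,dx .
\]
This is exactly the structure of \eqref{pri1}, which is the case $\ell=1$.

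Next we estimate the moment $m_{k\ell-1}(x):=\int f^\ell|v|^{k\ell-1}dv$. Apply the interpolation \eqref{2.3} to $g=f^\ell$, with weights $k\ell-1$ and $k\ell$ in place of $k$ and $\ell$. This yields
\[
\|m_{k\ell-1}\|_{L_x^{\frac{d+k\ell}{d+k\ell-1}}}\le C\|f\|_{L^\infty}^{\frac{\ell}{d+k\ell}}\,\|f^\ell|v|^{k\ell}\|_{L^1}^{\frac{d+k\ell-1}{d+k\ell}} .
\]
Hölder's inequality in $x$ pairs this with $\|u\|_{L^{k\ell+d}_x}$. Using \eqref{lwuqiong}, we arrive at the differential inequality
\[
Y'\le d\Bigl(1-\tfrac1\ell\Bigr)\ell\, Y + Ck\ell\,\|u\|_{L^{k\ell+d}}\|f_0\|_{L^\infty}^{\frac{\ell}{d+k\ell}}\,Y^{a},
\qquad a=\tfrac{d+k\ell-1}{d+k\ell},
\]
where $Y=\|f|v|^k\|^\ell_{L^\ell}$. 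Note that the exponential factor in \eqref{03.1} is written as $e^{(d-d/\ell)T}$; the precise constant in front of $Y$ depends on how one normalizes the $L^p$ estimate, and we will match the paper's normalization.

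The Grönwall-type Lemma \ref{gronwall}(3) applies with $1-a=\frac1{d+k\ell}$. It gives $Y(t)\le e^{cT}\bigl[Y(0)^{\frac1{d+k\ell}}+C\|f_0\|_\infty^{\frac{\ell}{d+k\ell}}\int_0^T\|u\|_{L^{k\ell+d}}\bigr]^{d+k\ell}$. The elementary inequality $(a+b)^N\le 2^{N-1}(a^N+b^N)$ then produces \eqref{03.1}, where $\|f_0\|_\infty^\ell=\|f_0^\ell\|_\infty$.

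For \eqref{03.2}, apply \eqref{2.3} with $g=f(t)$, using moments $p\le k$:
\[
\Bigl\|\int f|v|^p\,dv\Bigr\|_{L^{\frac{d+k}{d+p}}}\le C\|f\|_\infty^{\frac{k-p}{d+k}}\,\|f|v|^k\|_{L^1}^{\frac{d+p}{d+k}} .
\]
We then insert \eqref{03.1} with $\ell=1$; the exponential factor equals $1$ in that case. Finally, \eqref{lwuqiong} replaces $\|f\|_\infty$ by $\|f_0\|_\infty$.

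The main obstacle is rigor rather than the algebra. For a weak solution, testing with $f^{\ell-1}|v|^{k\ell}$ is not directly justified. We plan to handle this by renormalization/regularization, for example via DiPerna–Lions or by smoothing $u$ and passing to the limit, together with a cutoff $|v|^{k\ell}\chi_R(v)$. The cutoff produces an additional term $f^\ell|v|^{k\ell}(u-v)\cdot\nabla\chi_R$, which must be shown to vanish as $R\to\infty$. The other delicate point is tracking the sign of the $-|v|^2$ term for general $k\ell\ge1$, and checking that the Hölder exponents match exactly $L^{k\ell+d}_x$.
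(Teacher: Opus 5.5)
Your proposal follows the paper's proof essentially step for step: the same test function $f^{\ell-1}|v|^{k\ell}$, integration by parts in $v$ with the nonpositive $-|v|^2$ contribution dropped, the kinetic interpolation \eqref{2.3} applied to $f^\ell$ with moments $k\ell-1,k\ell$ and paired with $\|u\|_{L^{k\ell+d}_x}$ by H\"older, Lemma \ref{gronwall}(3), and then \eqref{2.3} combined with the $\ell=1$ bound to get \eqref{03.2}. One caution: do not try to ``match'' the factor $e^{(d-d/\ell)T}$. Your rate $d(\ell-1)$ for $Y=\|f|v|^k\|^\ell_{L^\ell}$ is what the paper's own inequality \eqref{3.7v1} gives after multiplying by $\ell$, and it cannot be improved: for $u=0$, $k=0$ one has $\|f(t)\|^\ell_{L^\ell}=e^{d(\ell-1)t}\|f_0\|^\ell_{L^\ell}$ exactly. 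Likewise, the step $(a+b)^N\le 2^{N-1}(a^N+b^N)$ puts a constant in front of the initial term. So \eqref{03.1} holds only in this corrected form, which is harmless because only $\ell=1$ is used afterwards.
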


\begin{remark}
Applying Young's inequality, we observe that
$$
\ba
&\|f_{0}\|_{L^{\infty}_{ x,v}}^{\f{k-p}{d +k}}\leq C(\|f_{0}\|_{L^{\infty}_{ x,v}}+1),\\
&\|f_{0}\|_{L^{\infty}_{ x,v}}^{\f{1}{d +k}}\leq C(\|f_{0}\|_{L^{\infty}_{ x,v}}+1).
\ea
$$
Substituting these bounds into \eqref{03.2}, we recover \eqref{1.3}.
Therefore, this lemma provides a generalization of       Hamdache's moment estimates \eqref{1.3} in \cite{[Hamdache]}.

As a direct consequence, for $p\geq1$, the following estimate holds:
\be\ba\label{3.2}
\B\|\int_{\mathbb{R}^{d}_{v}}
f |v|^{k} dv\B\|_{L^{\infty}(0,T;L_{x}^{p})} \leq C \|f_{0}\|_{L^{\infty}_{ x,v}}^{\f{ p-1}{p}}\B[ \|f_{0} |v|^{pd+pk-d } \| _{L^{1}_{x,v} }+ C \|f_{0} \|_{L^{\infty}_{x,v}} \B(\int_{0}^{T}\|u\|_{L_{x}^{pd+pk }}ds\B)^{pd+pk}\B]^{\f{1}{p} }.
\ea\ee
\end{remark}

\begin{proof}
The classical energy estimate of the particle equation $\eqref{NSV}_{2}$ yields
\be\label{2.7}
\f{1}{\ell}\f{d}{dt}\| f |v|^{k}\|^{\ell}_{L^{\ell}(\mathbb{R}^{d}\times\mathbb{R}_{v}^{d}) }= -\iint_{\mathbb{R}^{d}\times\mathbb{R}_{v}^{d}}\nabla_{v}[f(u-v) ] f^{\ell-1} |v|^{\ell k}dvdx,
\ee
where we have used the identity $ \iint_{\mathbb{R}^{d}\times\mathbb{R}^{d}_{v}}v\cdot\nabla_{x}f f^{\ell-1} |v|^{\ell k}dvdx=0$.
Since
\[
\nabla_{v}\cdot[f(u-v) ] f^{\ell-1}
= \frac{1}{\ell} (u-v) \cdot \nabla_{v} f^{\ell} - d f^{\ell},
\]
we have
\begin{align*}
&-\iint_{\mathbb{R}^{d}\times\mathbb{R}_{v}^{d}} \nabla_{v}\cdot[f(u-v) ] f^{\ell-1} |v|^{\ell k} dvdx \\
&= k \iint_{\mathbb{R}^{d}\times\mathbb{R}_{v}^{d}} u \cdot\nabla_{v}|v| f^{\ell } |v|^{\ell k-1} dvdx
- k \iint_{\mathbb{R}^{d}\times\mathbb{R}_{v}^{d}} f^{\ell } |v|^{\ell k} dvdx \\
&\quad + (d-\f{d}{\ell}) \iint_{\mathbb{R}^{d}\times\mathbb{R}_{v}^{d}} f^{\ell } |v|^{\ell k} dvdx
\end{align*}
using integration by parts and $v \cdot\nabla_{v}|v| = |v|$.
Thus, \eqref{2.7} becomes
\be\label{3.6}\ba
&\f{1}{\ell}\f{d}{dt}\| f |v|^{k}\|^{\ell}_{L^{\ell}(\mathbb{R}^{d}\times\mathbb{R}_{v}^{d}) }+ k \iint_{\mathbb{R}^{d}\times\mathbb{R}_{v}^{d}} f^{\ell } |v|^{\ell k }dvdx\\
=& k \iint_{\mathbb{R}^{d}\times\mathbb{R}_{v}^{d}}u \cdot\nabla_{v}|v| f^{\ell } |v|^{\ell k-1}dvdx +(d-\f{d}{\ell})\iint_{\mathbb{R}^{d}\times\mathbb{R}_{v}^{d}} f^{\ell } |v|^{\ell k}dvdx.
\ea\ee
Using H\"older's inequality, kinetic interpolation inequality \eqref{2.3}, and \eqref{lwuqiong}, we obtain
\begin{align*}
 \iint_{\mathbb{R}^{d}\times\mathbb{R}_{v}^{d}} |u \cdot\nabla_{v}|v| f^{\ell } |v|^{\ell k-1}| dvdx
 \leq C \|u\|_{L_{x}^{k\ell+d }}
\|f_{0}^{\ell}\|^{\f{1}{k\ell+d }}_{L^{\infty}_{x,v} }\B\|\int_{ \mathbb{R}^{d}_{v}}
f^{\ell} |v|^{k \ell } dv\B \|_{L_{x}^{1}}^{\f{k\ell+d-1}{k\ell+d }}.\end{align*}
Substituting this into \eqref{3.6}, we have
\be\ba\label{3.7v1}
&\f{1}{\ell}\f{d}{dt}\| f |v|^{k}\|^{\ell}_{L^{\ell}(\mathbb{R}^{d}\times\mathbb{R}_{v}^{d})}+ k \iint_{\mathbb{R}^{d}\times\mathbb{R}_{v}^{d}} f^{\ell } |v|^{\ell k }dvdx
\\
&\leq C k \|u\|_{L^{k\ell+d }_{x}} \|f_{0}^{\ell}\|^{\f{1}{k\ell+d }}_{ L^{\infty}_{x,v}}\B\|\int_{ \mathbb{R}_{v}^{d}}
f^{\ell} |v|^{k\ell } dv\B \|_{L_{x}^{1}}^{\f{k\ell+d-1}{k\ell+d }}+(d-\f{d}{\ell})\iint_{\mathbb{R}^{d}\times\mathbb{R}_{v}^{d}} f^{\ell } |v|^{\ell k}dvdx.
\ea\ee
Applying Gr\"onwall's inequality \eqref{gronwall3} in Lemma \ref{gronwall}, we obtain \eqref{03.1}.
It follows directly from \eqref{03.1} that
\be\ba\label{2.16}
\B\|\iint_{\mathbb{R}^{d}\times\mathbb{R}_{v}^{d}}f|v|^{k } dvdx\B\|_{L^{\infty}(0,T)}\leq C\|f_{0} |v|^{k}\|_{L^{1}_{x,v}}+ C \|f \|_{L^{\infty}_{x,v}}\B(\int_{0}^{T}\|u\|_{L_{x}^{k+d }}ds\B)^{k+d}.
\ea\ee
According to the kinetic  interpolation inequality \eqref{2.3} and \eqref{lwuqiong}, we obtain
\be\ba\label{2.17}
\B\|\int_{\mathbb{R}^{d}_{v}}
f |v|^{p} dv\B\|_{L_{t}^{\infty}(0,T;L_{x}^{\f{d +k}{d +p}})} \leq C \|f\|_{L^{\infty}_{ x,v}}^{\f{k-p}{d +k}}\|f|v|^{k}\|_{L ^{\infty}(0,T;L_{x,v}^{1})}^{\f{d +p}{d +k} }\leq C \|f_{0}\|_{L^{\infty}_{t,x,v}}^{\f{k-p}{d +k}}\|f|v|^{k}\|_{L_{t}^{\infty}(0,T;L_{x,v}^{1})}^{\f{d +p}{d +k} }.
\ea\ee
Substituting \eqref{2.16} into \eqref{2.17}, we further deduce
\be\ba
\B\|\int_{\mathbb{R}^{d}_{v}}
f |v|^{p} dv\B\|_{L^{\infty}(0,T;L_{x}^{\f{d +k}{d +p}})} \leq C \|f\|_{L^{\infty}_{t,x,v}}^{\f{k-p}{d +k}}\B[\|f_{0} |v|^{k}\|_{L^{1}_{x,v}}+ C \|f_{0} \|_{L^{\infty}_{x,v}} \B(\int_{0}^{T}\|u\|_{L_{x}^{k+d }}ds\B)^{k+d}\B]^{\f{d +p}{d +k} }.
\ea\ee
This completes the proof of the lemma.
\end{proof}

\subsection{Higher moment estimates of the Vlasov equations}

This subsection is devoted to deriving moment estimates for the gradient of the distribution function under the Lipschitz
control of the velocity field.
We also investigate moment estimates involving the second derivatives of the distribution function.

\begin{lemma}\label{lemma3.3}
Let $(f, u)$ be a weak solution to the Vlasov equation $\eqref{NSV}_{2}$ on $\mathbb{R}^{d}\times \mathbb{R}^{d}_{v}$, and assume the initial data $f_{0}$ satisfies
$$
\langle v\rangle^{k} D f_{0} \in L^{p} (\mathbb{R}^{d} \times \mathbb{R}^{d}_{v} ), ~ \ell \geq1, ~ k\geq0.
$$
If $u\in L^{1}(0,T;L^{\infty} (\mathbb{R}^{d} ))$ and $\nabla u\in L^{1}(0,T;L^{\infty} (\mathbb{R}^{d} ))$, then
\be \langle v\rangle^{k} D f \in L^{\infty} (0, T ; L^{p} (\mathbb{R}^{d} \times \mathbb{R}^{d}_{v} ) ).
\ee\end{lemma}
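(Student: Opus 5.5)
We will prove the differential inequality \eqref{pri2} and then close it with the Gr\"onwall inequality \eqref{gronwall1} of Lemma \ref{gronwall}, taking $g_1=\|u\|_{L^\infty_x}+\|\nabla u\|_{L^\infty_x}+1\in L^1(0,T)$ and $C_2=C_3=0$. The computation is first done for smooth, rapidly decaying solutions; the bound for weak solutions then follows by the standard route of mollifying $u$ and $f_0$, solving the linear transport equation with the regularized field, and passing to the limit by weak lower semicontinuity of the weighted $L^p$ norm. Uniqueness of the linear Vlasov equation for Lipschitz $u$ (DiPerna--Lions) identifies the limit with $f$.

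\emph{Step 1: the derivative equations.} Write \eqref{Vlasov} as
\[
\partial_t f+v\cdot\nabla_x f+(u-v)\cdot\nabla_v f=d f .
\]
Apply $\partial_{x_i}$ and $\partial_{v_i}$ respectively:
\[
(\partial_t+v\cdot\nabla_x+(u-v)\cdot\nabla_v)\partial_{x_i}f=d\,\partial_{x_i}f-\partial_{x_i}u\cdot\nabla_v f,
\]
\[
(\partial_t+v\cdot\nabla_x+(u-v)\cdot\nabla_v)\partial_{v_i}f=(d+1)\partial_{v_i}f-\partial_{x_i}f .
\]
Thus the vector $Df=(\nabla_x f,\nabla_v f)$ is transported along the same characteristic field, with a linear source. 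The coefficients of that source are bounded by $C(1+|\nabla u|)$ and involve no $v$-weights; this is why only $\nabla u\in L^1L^\infty$ is needed.

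\emph{Step 2: the weighted $L^p$ estimate.} Multiply each equation by $p\,|\partial f|^{p-2}\partial f\,\langle v\rangle^{kp}$ and integrate over $\mathbb{R}^d\times\mathbb{R}^d_v$. The term $v\cdot\nabla_x$ integrates to zero. For the $v$-transport term, integration by parts gives
\[
\iint (u-v)\cdot\nabla_v\big(|\partial f|^p\big)\langle v\rangle^{kp}
= d\iint |\partial f|^p\langle v\rangle^{kp}-kp\iint (u-v)\cdot v\,\langle v\rangle^{kp-2}|\partial f|^p .
\]
The piece with $-v\cdot v$ has a favorable sign, so it can be dropped. The remaining piece is bounded by
\[
kp\,\|u\|_{L^\infty_x}\iint \langle v\rangle^{kp-1}|\partial f|^p\le kp\,\|u\|_{L^\infty_x}\|\langle v\rangle^k\partial f\|_{L^p}^p ,
\]
using $|v|\langle v\rangle^{-2}\le\langle v\rangle^{-1}\le 1$. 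This is exactly where working with $\langle v\rangle$ instead of $|v|$ pays off, since no singularity at $v=0$ appears.

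The source terms are handled with H\"older's inequality and Young's inequality $ab^{p-1}\le a^p+b^p$ applied componentwise:
\[
\iint |\partial_{x_i}u|\,|\nabla_v f|\,|\partial f|^{p-1}\langle v\rangle^{kp}\le\|\nabla u\|_{L^\infty_x}\|\langle v\rangle^kDf\|_{L^p}^p ,
\]
and similarly $\iint|\partial_{x_i}f||\partial_{v_i}f|^{p-1}\langle v\rangle^{kp}\le\|\langle v\rangle^kDf\|_{L^p}^p$. Summing over all components yields \eqref{pri2}, namely
\[
\frac{d}{dt}\|\langle v\rangle^kDf\|_{L^p}^p\le C(\|u\|_{L^\infty}+\|\nabla u\|_{L^\infty}+1)\|\langle v\rangle^kDf\|_{L^p}^p ,
\]
with $C=C(d,k,p)$.

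\emph{Step 3: conclusion.} Gr\"onwall's inequality \eqref{gronwall1} gives
\[
\|\langle v\rangle^kDf(t)\|_{L^p}^p\le e^{C\int_0^T(\|u\|_\infty+\|\nabla u\|_\infty+1)}\|\langle v\rangle^kDf_0\|_{L^p}^p .
\]

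The main obstacle is not the algebra but its justification. Weak solutions need not have $Df$ in $L^p$ a priori, and $|\partial f|^{p-2}\partial f$ is not smooth when $p<2$. I would handle this by performing the whole computation on the regularized problem. There the characteristics are smooth, the solution $f^\varepsilon$ is smooth, and a cutoff in $v$ makes all boundary terms vanish. The resulting bound is uniform in $\varepsilon$ because it depends only on $\|u^\varepsilon\|_{L^1L^\infty}$ and $\|\nabla u^\varepsilon\|_{L^1L^\infty}$, both of which are controlled by the corresponding norms of $u$. The final step is passing to the limit with the weak-$*$ compactness described in the first paragraph.
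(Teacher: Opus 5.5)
Your proposal is correct and follows essentially the same route as the paper: differentiate in $x_i$ and $v_i$, test with $|\partial f|^{p-2}\partial f\,\langle v\rangle^{kp}$, control the weight commutator from the $v$-transport by $\|u\|_{L^\infty_x}$ and the source terms by $\|\nabla u\|_{L^\infty_x}$ to obtain \eqref{pri2}, and close with \eqref{gronwall1}. Your $\partial_{v_i}$-equation is more accurate than the paper's, because it keeps the coupling term $-\partial_{x_i}f$ that the paper's identity \eqref{3.53} drops (harmlessly for the combined bound \eqref{pro2v1}), and your regularization paragraph adds a justification the paper never attempts; for $p=1$, though, the weak-$*$ limit step must be replaced, e.g.\ by the explicit bi-Lipschitz characteristic representation of $f$, since bounded sets in $L^1$ only give measure-valued limits.
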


\begin{remark}
For weak solutions of the Naier--Stokes--Vlasov equation $\eqref{NSV}$, the Lipschitz
control of the velocity field implies the condition $u\in L^{1}(0,T;L^{\infty}\left(\mathbb{R}^{d}\right))$.
Indeed, the interpolation inequality guarantees
$$
\|u\| _{L^{1}(0,T;L^{\infty} (\mathbb{R}^{d} ))}\leq C\|u\| _{L^{\infty }(0,T;L^{2} (\mathbb{R}^{d} ))}^{\f{2}{d+2}}\|\nabla u\| _{L^{1}(0,T;L^{\infty} (\mathbb{R}^{d} ))}^{\f{d}{d+2}}.
$$
\end{remark}

\begin{proof}
The kinetic equation $\eqref{NSV}_{2}$ can be rewritten as
$$
f_{t}+v\cdot\nabla_{x}f+(u-v)\cdot\nabla_{v}f=df.
$$
Taking the derivative $\partial_{i}$ to both sides of the equation and multiplying by $\left|\partial_{i}f\right|^{p-2} \partial_{i}f\langle v\rangle^{k p}$ yields
\begin{align*}
 \frac{1}{p} \frac{d}{d t}\left\|\langle v\rangle^{k} \partial_{i}f \right\|_{L^{p}_{x,v} }^{p}
&=
-\iint_{\mathbb{R}^{d} \times \mathbb{R}_{v}^{d}} \partial_{i}
u\nabla_{v} f\left|\partial_{i}f\right|^{p-2} \partial_{i}f \langle v\rangle^{kp}d v d x \\
&\quad
+ \iint_{\mathbb{R}^{d} \times \mathbb{R}_{v}^{d}} (u-v)\cdot\nabla_{v} \partial_{i}f\left|\partial_{i}f\right|^{p-2} \partial_{i}f\langle v\rangle^{k p} d v d x \\
&\quad
- d\iint_{\mathbb{R}^{d} \times \mathbb{R}_{v}^{d}} \partial_{i}f\left|\partial_{i}f\right|^{p-2} \partial_{i}f\langle v\rangle^{k p} d v d x \\
&=: I + II + III.
\end{align*}
Here, we also used the fact $\iint_{\mathbb{R}^{d} \times \mathbb{R}_{v}^{d}}v\cdot\nabla_{x}\partial_{i}f\left|\partial_{i}f\right|^{p-2} \partial_{i}f\langle v\rangle^{k p} d v d x =0$.

By H\"older's inequality, we arrive at
\be\ba\label{3.46}
|I_{1}|\leq&C\|\nabla u\|_{L^{\infty}_{x} }\left\|\langle v\rangle^{k} \nabla_{v} f\right\|_{L^{p}_{x,v}}\left\|\langle v\rangle^{k} \nabla_{x} f\right\|_{L^{p}_{x,v}}^{p-1}.
\ea\ee
We compute
\be\ba
II =&-\f1p\iint_{\mathbb{R}^{d} \times \mathbb{R}_{v}^{d}} u \cdot\nabla_{v} \left|\partial_{i}f\right|^{p } \langle v\rangle^{k p} d v d x+\iint_{\mathbb{R}^{d} \times \mathbb{R}_{v}^{d}} v\cdot\nabla_{v} \left|\partial_{i}f\right|^{p } \langle v\rangle^{k p} d v d x
\\=&:II_1+II_2.
\ea\ee
By applying integration by parts, we obtain
$$\ba
II_1 =& k\iint_{\mathbb{R}^{d} \times \mathbb{R}_{v}^{d}} u \left|\partial_{i}f\right|^{p } \langle v\rangle^{k p-2}v d v d x,
\ea$$
which implies that
\be\ba\label{3.48}
|II_1|\leq C\|u\|_{L^{\infty}\left(\mathbb{R}^{d}\right)}\left\|\langle v\rangle^{k} \nabla_{x} f\right\|_{L^{p} _{x,v}}^{p}.
\ea\ee
Applying integration by parts once more, we find
$$\ba
II_2=-d\iint_{\mathbb{R}^{d} \times \mathbb{R}_{v}^{d}} \partial_{i} f\left|\partial_{i}f\right|^{p-2} \partial_{i}f\langle v\rangle^{k p} d v d x-kp\iint_{\mathbb{R}^{d} \times \mathbb{R}_{v}^{d}} |v|^{2} \left|\partial_{i}f\right|^{p } \langle v\rangle^{k p-2} d v d x,
\ea$$
which leads to
\be\ba\label{3.49}
|II_2|\leq C\left\|\langle v\rangle^{k} \partial_{i}f \right\|_{L^{p}_{x,v}}^{p}.
\ea\ee
Combining \eqref{3.48} and \eqref{3.49}, we see that
\be\ba\label{3.50}
|II|\leq&C\|u\|_{L^{\infty} _{x}}\left\|\langle v\rangle^{k} \nabla_{x} f\right\|_{L^{p}_{x,v}}^{p}+C\left\|\langle v\rangle^{k} \nabla_{x} f\right\|_{L^{p}_{x,v}}^{p}.
\ea\ee
A straightforward calculation further yields
\be\ba\label{3.51}
|III|\leq C\left\|\langle v\rangle^{k} \nabla_{x} f\right\|_{L^{p}_{x,v}}^{p}.
\ea\ee
Therefore, combining \eqref{3.46}, \eqref{3.50}, and \eqref{3.51}, we conclude
\be\ba\label{3.52}
\frac{1}{p} \frac{d}{d t}\left\|\langle v\rangle^{k} \partial_{i}f \right\|_{L^{p}_{x,v}}^{p}
&\leq C\|\nabla u\|_{L^{\infty} _{x}}\left\|\langle v\rangle^{k} \nabla_{v} f\right\|_{L^{p}_{x,v}}\left\|\langle v\rangle^{k} \nabla_{x} f\right\|_{L^{p}_{x,v}}^{p-1}\\&+C(\|u\|_{L^{\infty} _{x}}+1)\left\|\langle v\rangle^{k} \nabla_{x} f\right\|_{L^{p}_{x,v}}^{p}.
\ea\ee
Applying the operator $\partial_{v_{j}}$ to the Vlasov equation $\eqref{NSV}_{2}$, we obtain
$$
\partial_{v_{j}}f_{t}+\partial_{v_{j}}v\cdot\nabla_{x}f
+v\cdot\nabla_{x}\partial_{v_{j}}f -\partial_{v_{j}}v \cdot\nabla_{v}f+(u-v)\cdot\nabla_{v}\partial_{v_{j}}f=d\partial_{v_{j}}f.
$$
Multiplying this equation by $\left|\partial_{v_{j}}f\right|^{p-2} \partial_{v_{j}}f\langle v\rangle^{k p}$, we deduce that
\be\ba\label{3.53}
 \frac{1}{p} \frac{d}{d t}\left\|\langle v\rangle^{k} \partial_{v_{j}}f \right\|_{L^{p}\left(\mathbb{R}^{d} \times \mathbb{R}^{d}\right)}^{p}
&=
\iint_{\mathbb{R}^{d} \times \mathbb{R}_{v}^{d}}\partial_{v_{j}}v \cdot\nabla_{v}f\left|\partial_{v_{j}}f\right|^{p-2} \partial_{v_{j}}f \langle v\rangle^{kp}d v d x \\
&\quad
- \iint_{\mathbb{R}^{d} \times \mathbb{R}_{v}^{d}} (u-v)\cdot\nabla_{v} \partial_{v_{j}}f\left|\partial_{v_{j}}f\right|^{p-2} \partial_{v_{j}}f\langle v\rangle^{k p} d v d x \\
&\quad
+d\iint_{\mathbb{R}^{d} \times \mathbb{R}_{v}^{d}} \partial_{v_{j}}f\left|\partial_{v_{j}}f\right|^{p-2} \partial_{v_{j}}f\langle v\rangle^{k p} d v d x \\
&=: I'+II'+III',
\ea\ee
where we have used the fact $\iint_{\mathbb{R}^{d} \times \mathbb{R}_{v}^{d}} v\cdot\nabla_{x}\partial_{v_{j}}f\left|\partial_{v_{j}}f\right|^{p-2} \partial_{v_{j}}f\langle v\rangle^{k p} d v d x=0.$

A simple calculation gives
\be\ba \label{3.54}
|I'|+|III'|\leq C\left\|\langle v\rangle^{k} \nabla_{v} f\right\|_{L^{p} _{x,v}}^{p},
\ea\ee
and
\be\ba
II'
 &=
-\f1p\iint_{\mathbb{R}^{d} \times \mathbb{R}_{v}^{d}} u \cdot\nabla_{v} \left|\partial_{v_{j}}f\right|^{p } \langle v\rangle^{k p} d v d x+\iint_{\mathbb{R}^{d} \times \mathbb{R}_{v}^{d}} v\cdot\nabla_{v} \left|\partial_{v_{j}}f\right|^{p } \langle v\rangle^{k p} d v d x \\
&=: II'_1+II'_2.
\ea\ee
Following exactly the same line of reasoning leading to \eqref{3.50}, we have
\be\ba\label{3.56}
|II'|\leq&C\|u\|_{L^{\infty} _{x}}\left\|\langle v\rangle^{k} \nabla_{v} f\right\|_{L^{p}_{x,v} }^{p}+C\left\|\langle v\rangle^{k} \nabla_{v} f\right\|_{L^{p}_{x,v} }^{p}.
\ea\ee
Inserting \eqref{3.54} and \eqref{3.56} into \eqref{3.53}, we obtain
\be\ba\label{3.57}
\frac{1}{p} \frac{d}{d t}\left\|\langle v\rangle^{k} \nabla_{v} f\right\|_{L^{p}_{x,v} }^{p} \leq C\|u\|_{L^{\infty} _{x}}\left\|\langle v\rangle^{k} \nabla_{v} f\right\|_{L^{p}_{x,v}}^{p}+C\left\|\langle v\rangle^{k} \nabla_{v} f\right\|_{L^{p}_{x,v}}^{p}.
\ea\ee
Combining \eqref{3.52} and \eqref{3.57}, we end up with
\be\label{pro2v1}
\begin{aligned}
 \frac{1}{p} \frac{d}{d t} \left\|\langle v\rangle^{k} D f\right\|_{L^{p}_{x,v}}^{p}  \leq
C\left(\|u\|_{L^{\infty}_{x} }+\|\nabla u\|_{L^{\infty}_{x} }+1\right)\left\|\langle v\rangle^{k} D f\right\|_{L^{p}_{x,v}}^{p}.
\end{aligned}
\ee
Applying the classical Gr\"onwall's inequality (1) in Lemma \ref{gronwall}, we conclude the proof of the lemma.
\end{proof}

\begin{lemma}\label{lemma3.4}
Let $(f, u)$ be a weak solution of the Vlasov equation $\eqref{NSV}_{2}$ on
$\mathbb{R}^{d}\times \mathbb{R}_{v}^{d}$  and the initial datum $f_{0}$ satisfy
$$
\langle v\rangle^{k} D ^{2}f_{0} \in L^{2} (\mathbb{R}^{d} \times \mathbb{R}^{d}_{v}), k\geq0, d\leq 4.
$$
If
\[
u\in L^{1}(0,T;L^{\infty} (\mathbb{R}^{d} )),
\quad
\nabla u\in L^{1}(0,T;L^{\infty} (\mathbb{R}^{d} )),
\quad
\nabla^{2} u\in L^{1}(0,T;L^{4} (\mathbb{R}^{d} )),
\]
then
\be\langle v\rangle^{k} D ^{2} f \in L^{\infty}(0,T;L^{2} (\mathbb{R}^{d} \times \mathbb{R}^{d}_{v})).
\ee
\end{lemma}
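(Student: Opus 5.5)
We follow the scheme of Lemma \ref{lemma3.3}, now at the level of second derivatives and in $L^2$. We start from the rewritten kinetic equation
$$f_t+v\cdot\nabla_x f+(u-v)\cdot\nabla_v f=df,$$
apply a second-order operator $D^2=\partial_x^\alpha\partial_v^\beta$ with $|\alpha|+|\beta|=2$, multiply by $D^2 f\,\langle v\rangle^{2k}$, and integrate over $\mathbb{R}^d\times\mathbb{R}^d_v$. We distinguish the three cases $\partial_{x_i}\partial_{x_j}$, $\partial_{x_i}\partial_{v_j}$, and $\partial_{v_i}\partial_{v_j}$. In each case the transport term $v\cdot\nabla_x D^2f$ integrates to zero.

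\textbf{Terms with no derivative on $u$.} The principal term $(u-v)\cdot\nabla_v D^2 f$ is handled exactly as $II$ and $II'$ in Lemma \ref{lemma3.3}. Integrating by parts in $v$ moves the derivative onto $\langle v\rangle^{2k}$ and onto $v$. This gives the bound
$$C(\|u\|_{L^\infty_x}+1)\|\langle v\rangle^kD^2f\|_{L^2_{x,v}}^2,$$
using $|\nabla_v\langle v\rangle^{2k}|\le C\langle v\rangle^{2k-1}$. The commutators coming from $\partial_{v_j}$ hitting $v$ (in $v\cdot\nabla_x$ or in $-v\cdot\nabla_v$), together with the zero-order term $df$, produce only terms of the form $\iint D^2f\,D^2f\,\langle v\rangle^{2k}$. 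These are bounded by $C\|\langle v\rangle^kD^2f\|_{L^2_{x,v}}^2$.

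\textbf{Terms where $x$-derivatives hit $u$.} When exactly one $x$-derivative falls on $u$, we obtain $\partial_i u\cdot\nabla_v\partial f$, where $\partial f$ is a first derivative of $f$. Hence $\nabla_v\partial f$ is a second derivative. By Cauchy--Schwarz, these terms are bounded by $C\|\nabla u\|_{L^\infty_x}\|\langle v\rangle^kD^2f\|_{L^2_{x,v}}^2$.

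\textbf{The main obstacle.} The difficult term arises when both derivatives are $\partial_{x_i}\partial_{x_j}$ and both fall on $u$. It reads
$$\iint \partial_i\partial_j u(x)\cdot\nabla_v f\,\partial_i\partial_j f\,\langle v\rangle^{2k}\,dv\,dx.$$
Here $\partial_i\partial_j u$ is only in $L^4_x$, with no $L^\infty$ bound. We will first apply Cauchy--Schwarz in $v$ and then H\"older in $x$ with exponents $(4,4,2)$. This bounds the term by
$$\|\partial_i\partial_j u\|_{L^4_x}\,\big\|\|\langle v\rangle^k\nabla_vf\|_{L^2_v}\big\|_{L^4_x}\,\|\langle v\rangle^kD^2f\|_{L^2_{x,v}}.$$
Next we set $F(x)=\|\langle v\rangle^k\nabla_v f(x,\cdot)\|_{L^2_v}$. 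By Minkowski's inequality, $|\nabla_xF|\le\|\langle v\rangle^k\nabla_x\nabla_vf\|_{L^2_v}$. A Gagliardo--Nirenberg inequality in $x$ then gives
$$\|F\|_{L^4_x}\le C\|F\|_{L^2_x}^{1-d/4}\|\nabla F\|_{L^2_x}^{d/4},$$
which requires $d\le4$. This yields
$$\|F\|_{L^4_x}\le C\|\langle v\rangle^kDf\|_{L^2_{x,v}}^{1-d/4}\|\langle v\rangle^kD^2f\|_{L^2_{x,v}}^{d/4}.$$
The first factor is bounded uniformly in time by Lemma \ref{lemma3.3} with $p=2$, since $u,\nabla u\in L^1_tL^\infty_x$ and the initial datum has $\langle v\rangle^kDf_0\in L^2$. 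The term is therefore at most $C\|\partial_i\partial_ju\|_{L^4_x}\,Y^{\frac{d}{8}+\frac12}$, where $Y=\|\langle v\rangle^kD^2f\|^2_{L^2_{x,v}}$. This matches the power $\frac n4+1$ of the norm in \eqref{pri3}.

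\textbf{Closing the estimate.} Collecting all terms gives
$$Y'\le C\|\nabla^2u\|_{L^4_x}Y^{a}+C(\|u\|_{L^\infty_x}+\|\nabla u\|_{L^\infty_x}+1)Y,\qquad a=\frac d8+\frac12\in(0,1]\ \text{for } d\le4.$$
For $d<4$ we have $a<1$, and Lemma \ref{gronwall}(3) gives a bound on $Y$ in terms of $\int_0^T\|\nabla^2u\|_{L^4}$ and $\int_0^T(\|u\|_\infty+\|\nabla u\|_\infty)$. For $d=4$ we have $a=1$, and Lemma \ref{gronwall}(1) applies. In both cases this yields $\langle v\rangle^kD^2f\in L^\infty(0,T;L^2_{x,v})$. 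All manipulations are to be justified on a regularized solution and then passed to the limit.
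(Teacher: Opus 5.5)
Your proposal is correct and follows essentially the paper's proof. Both arguments use the same three-case weighted $L^2$ energy estimate for $\partial_i\partial_j f$, $\partial_i\partial_{v_j} f$, $\partial_{v_i}\partial_{v_j} f$. The only nontrivial term, $\partial_i\partial_j u\cdot\nabla_v f$, is controlled by H\"older with $\partial_i\partial_j u\in L^4_x$, a Gagliardo--Nirenberg inequality in $x$, the first-derivative bound of Lemma \ref{lemma3.3}, and Lemma \ref{gronwall}(3) or (1) according to $d<4$ or $d=4$. The one small difference is where Gagliardo--Nirenberg is applied: you apply it to $F=\|\langle v\rangle^k\nabla_v f\|_{L^2_v}$ via Minkowski, whereas the paper applies it for each fixed $v$ and then uses H\"older in $v$. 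Note also that, exactly like the paper, you need $\langle v\rangle^k Df_0\in L^2$ to invoke Lemma \ref{lemma3.3}. This is not among the lemma's stated hypotheses, although it is assumed in Theorem \ref{the1.1}.
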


\begin{remark}
We remark that one can replace $\nabla^{2} u\in L^{1}(0,T;L^{4}\left(\mathbb{R}^{d} \right))$ by $\nabla^{2} u\in L^{1}(0,T;L^{3}\left(\mathbb{R}^{3} \right))$ in three dimensions and $\nabla^{2} u\in L^{1}(0,T;L^{2+\eta}\left(\mathbb{R}^{3} \right))$ in two dimensions in this lemma, where $\eta $ is a sufficiently small positive constant.
The proof is omitted.
\end{remark}

\begin{proof}
Applying the operator $\partial_{i}\partial_{j}$ to the equation $\eqref{NSV}_{2}$, we obtain
\be\label{3.92}
\partial_{i}\partial_{j}f_{t}+v\cdot\nabla_{x}\partial_{i}\partial_{j}f
+\partial_{i}\partial_{j}
u\cdot\nabla_{v} f+\partial_{i}
u\cdot\nabla_{v}\partial_{i}\partial_{j}f+ \partial_{j}u \cdot\nabla_{v}\partial_{i}f +(u-v)\cdot\nabla_{v}\partial_{i}\partial_{j}f
=d\partial_{i}\partial_{j}f.
\ee
Taking the $L^{2}$ inner product of the above equation with $ \partial_{i}\partial_{j}f \langle v\rangle^{2k}$, we conclude by
$$
\iint_{\mathbb{R}^{d}\times\mathbb{R}_{v}^{d}} v\cdot\nabla_{x}\partial_{i}\partial_{j}f\partial_{i}\partial_{j}f \langle v\rangle^{2k}dxdv=0
$$
 that
\be\ba\label{3.93}
&\frac{1}{2} \frac{d}{d t}\left\|\langle v\rangle^{k} \partial_{i}\partial_{j}f \right\|_{L^{2} _{x,v}}^{2}\\
=&
- \iint_{\mathbb{R}^{d}\times\mathbb{R}_{v}^{d}} \partial_{i}\partial_{j}
u\nabla_{v} f \partial_{i}\partial_{j}f \langle v\rangle^{2k}dxdv-
\iint_{\mathbb{R}^{d}\times\mathbb{R}_{v}^{d}}\partial_{i}
u\nabla_{v}\partial_{i}\partial_{j}f \partial_{i}\partial_{j}f \langle v\rangle^{2k}dxdv \\
&\quad
- \iint_{\mathbb{R}^{d}\times\mathbb{R}_{v}^{d}} \partial_{j}u \cdot\nabla_{v}\partial_{i}f \partial_{i}\partial_{j}f \langle v\rangle^{2k}dxdv-\iint_{\mathbb{R}^{d}\times\mathbb{R}_{v}^{d}} (u-v)\cdot\nabla_{v}\partial_{i}\partial_{j}f\partial_{i}\partial_{j}f \langle v\rangle^{2k}dxdv\\
&\quad
+ d\iint_{\mathbb{R}^{d}\times\mathbb{R}_{v}^{d}} \partial_{i}\partial_{j}f \partial_{i}\partial_{j}f \langle v\rangle^{2k}dxdv\\
=&: I+II+III+IV+V.
\ea\ee
Each term is now estimated using standard inequalities.
By H\"older's inequality, we write
\be\ba\label{3.94}
|I|
&\leq \B(\iint_{\mathbb{R}^{d}\times\mathbb{R}_{v}^{d}} |\partial_{i}\partial_{j}
u|^{2}|\nabla_{v} f \langle v\rangle^{2k}|^{2}\langle v\rangle^{2k}dxdv\B)^{1/2}\left\|\langle v\rangle^{k} \partial_{i}\partial_{j}f \right\|_{L^{2}_{x,v}}
\\
&\leq
\|\partial_{i}\partial_{j}
u\|_{L^{4}_{x} } \B(\int_{\mathbb{R}_{v}^{d}}\|\nabla_{v} f\|^{2}_{L^{4}_{x} }\langle v\rangle^{2k}dv\B)^{1/2}\left\|\langle v\rangle^{k} \partial_{i}\partial_{j}f \right\|_{L^{2}_{x,v}}.
\ea\ee
By the Gagliardo--Nirenberg inequality, we arrive at
$$\ba
\|\nabla_{v} f\|_{L_{x}^{4} }\leq C\|\nabla_{v} f\|_{L^{2}_{x}}^{1-\f{d}{4} }\|\nabla\nabla_{v} f\|_{L^{2}_{x}}^{\f{d}{4}}.
\ea$$
Consequently, we have
\be\ba\label{3.95}
\B(\int_{\mathbb{R}_{v}^{d}}\|\nabla_{v} f\|^{2}_{L^{4}_{x}}\langle v\rangle^{2k}dv\B)^{1/2}
&\leq
C\B(\int_{\mathbb{R}_{v}^{d}}\|\nabla_{v} f\|_{L^{2}_{x}}^{2-\f{d}{2}}\|\nabla\nabla_{v} f\|_{L^{2}_{x}}^{\f{d}{2}} \langle v\rangle^{2k}dv\B)^{1/2}\\
&\leq
C\| \nabla_{v} f\langle v\rangle^{ k}\|_{L^{2}_{x,v}}^{1-\f{d}{4}}\|\nabla\nabla_{v} f\langle v\rangle^{ k}\|_{L^{2}_{x,v}}^{\f{d}{4}}\\
&\leq
C \|\nabla\nabla_{v} f\langle v\rangle^{ k}\|_{L^{2}_{x,v}}^{\f{d}{4}},
\ea\ee
where we have used Lemma \ref{lemma3.4}.

Plugging \eqref{3.95} into \eqref{3.94}, we know that
\be\ba\label{3.96}
|I|\leq&C\|\partial_{i}\partial_{j}
u\|_{L^{4}_{x} }\|\nabla\nabla_{v} f\langle v\rangle^{ k}\|_{L^{2}_{x,v}}^{\f{d}{4}}\left\|\langle v\rangle^{k} \partial_{i}\partial_{j}f \right\|_{L^{2}_{x,v}}.
\ea\ee
Integrating by parts, we get
$$\ba
II=-\f12
\iint_{\mathbb{R}^{d}\times\mathbb{R}_{v}^{d}} \partial_{i}
u\nabla_{v}(\partial_{i}\partial_{j}f)^{2} \langle v\rangle^{2k}dxdv
=k
\iint_{\mathbb{R}^{d}\times\mathbb{R}_{v}^{d}} \partial_{i}
u (\partial_{i}\partial_{j}f)^{2} \langle v\rangle^{2k-2}v dxdv.
\ea$$
By H\"older's inequality, one deduce from the latter equation that
\be\ba |II|\leq C\|\nabla u\|_{L^{\infty} _{x}}\left\|\langle v\rangle^{k} \partial_{i}\partial_{j}f \right\|_{L^{2}_{x,v}}^{2}.
\ea\ee
By H\"older's inequality, we have
\be\ba
|III|\leq C\|\nabla u\|_{L^{\infty} _{x}}\left\|\langle v\rangle^{k} \nabla_{v}\partial_{i}f \right\|_{L^{2}_{x,v}} \left\|\langle v\rangle^{k} \partial_{i}\partial_{j}f \right\|_{L^{2}_{x,v}}.
\ea\ee
Using integration by parts, one has
$$\ba
IV
 &=
k \iint_{\mathbb{R}^{d}\times\mathbb{R}_{v} ^{d}} u (\partial_{i}\partial_{j}f)^{2} \langle v\rangle^{2k-2}vdxdv-\f{d}{2 } \iint_{\mathbb{R}^{d}\times\mathbb{R}_{v} ^{d}} (\partial_{i}\partial_{j}f)^{2} \langle v\rangle^{2k}dxdv \\
&\quad
-k \iint_{\mathbb{R}^{d}\times\mathbb{R}_{v} ^{d}} v^{2} \cdot\nabla_{v}(\partial_{i}\partial_{j}f)^{2} \langle v\rangle^{2k-2}dxdv.
\ea$$
which means that
\be\ba\label{3.100}
|IV|\leq C\| u\|_{L^{\infty} _{x}}\left\|\langle v\rangle^{k} \partial_{i}\partial_{j}f \right\|_{L^{2}_{x,v}}^{2}+C\left\|\langle v\rangle^{k} \partial_{i}\partial_{j}f \right\|_{L^{2}_{x,v}}^{2}
\ea\ee
Inserting
\eqref{3.96}-\eqref{3.100} into \eqref{3.93}, one ends up with
\be\ba\label{3.101}
\frac{1}{2} \frac{d}{d t}\left\|\langle v\rangle^{k} \partial_{i}\partial_{j}f \right\|_{L^{2}_{x,v}}^{2}
&\leq
C\|\partial_{i}\partial_{j}
u\|_{L^{4} _{x}}\|\nabla\nabla_{v} f\langle v\rangle^{ k}\|_{L^{2}_{x,v}}^{\f{d}{4}}\left\|\langle v\rangle^{k} \partial_{i}\partial_{j}f \right\|_{L^{2}_{x,v} } \\
&\quad
+ C\|\nabla u\|_{L^{\infty} _{x}}\left\|\langle v\rangle^{k} \nabla_{v}\partial_{i}f \right\|_{L^{2}_{x,v}} \left\|\langle v\rangle^{k} \partial_{i}\partial_{j}f \right\|_{L^{2}_{x,v}} \\
&\quad
+ C(\|\nabla u\|_{L^{\infty} _{x}}+1) \left\|\langle v\rangle^{k} \partial_{i}\partial_{j}f \right\|_{L^{2}_{x,v}}^{2}.
\ea\ee
To close the estimate, we need to bound $\|\nabla\nabla_{v} f\langle v\rangle^{ k}\|_{L^{2}_{x,v}}$.
A slight modification of deduction of \eqref{3.92} leads to
\be
\partial_{i}\partial_{v_{j}}f_{t}
+\partial_{v_{j}}v\cdot\nabla_{x}\partial_{i}f
+ v\cdot\nabla_{x}\partial_{i}\partial_{v_{j}}f
+\partial_{i}
u\nabla_{v} \partial_{v_{j}}f -\partial_{v_{j}}v \cdot\nabla_{v}\partial_{i}f
+(u-v)\cdot\nabla_{v}\partial_{i}\partial_{v_{j}}f
=d\partial_{v_{j}}\partial_{i}f.
\ee
Thanks to $\iint_{\mathbb{R}^{d}\times\mathbb{R}_{v}^{d}}
v\cdot\nabla_{x}\partial_{i}\partial_{v_{j}}f\partial_{i}\partial_{v_{j}}f \langle v\rangle^{2k}dxdv=0,$
we multiply the above equation by $\partial_{i}\partial_{v_{j}}f \langle v\rangle^{2k}$ and integrate over $\mathbb{R}^{d}\times\mathbb{R}^{d}_{v}$ to get
\be\ba
&\frac{1}{2} \frac{d}{d t}\left\|\langle v\rangle^{k} \partial_{i}\partial_{v_{j}}f \right\|_{L^{2}_{x,v}}^{2}\\
=&
-\iint_{\mathbb{R}^{d}\times\mathbb{R}_{v}^{d}} \partial_{i}
u\nabla_{v} \partial_{v_{j}}f\partial_{i}\partial_{v_{j}}f \langle v\rangle^{2k}dxdv+\iint_{\mathbb{R}^{d}\times\mathbb{R}_{v}^{d}} \partial_{v_{j}}v \cdot\nabla_{v}\partial_{i}f\partial_{i}\partial_{v_{j}}f \langle v\rangle^{2k}dxdv \\
&\quad
-\iint_{\mathbb{R}^{d}\times\mathbb{R}_{v}^{d}} (u-v)\cdot\nabla_{v}\partial_{i}\partial_{v_{j}}f
\partial_{i}\partial_{v_{j}}f \langle v\rangle^{2k}dxdv+
d\iint_{\mathbb{R}^{d}\times\mathbb{R}_{v}^{d}}\partial_{v_{j}}\partial_{i}f\partial_{i}\partial_{v_{j}}f \langle v\rangle^{2k}dxdv \\
&\quad
-\iint_{\mathbb{R}^{d}\times\mathbb{R}_{v}^{d}}
\partial_{v_{j}}v\cdot\nabla_{x}\partial_{i}f\partial_{i}\partial_{v_{j}}f \langle v\rangle^{2k}dxdv\\
=&: I'+II'+III'+IV'+V'.
\ea\ee
Applying H\"older's inequality, one has
\be\ba\label{3.37v1}
&|I'|\leq C \|\nabla u\|_{L^{\infty}_{x} }\left\|\langle v\rangle^{k} \nabla_{v}\partial_{v_{j}}f \right\|_{L^{2}_{x,v}} \left\|\langle v\rangle^{k} \partial_{i}\partial_{v_{j}}f \right\|_{L^{2}_{x,v}},
\\
&|II'|\leq C \left\|\langle v\rangle^{k} \nabla_{v}\partial_{i}f \right\|_{L^{2}_{x,v}} \left\|\langle v\rangle^{k} \partial_{i}\partial_{v_{j}}f \right\|_{L^{2}_{x,v}}.
\ea\ee
It is clear that
\be\ba\label{3.38v1}
|IV'|+|V'|\leq C \left\|\langle v\rangle^{k} \partial_{v_{j}}\partial_{i}f \right\|_{L^{2}_{x,v}}^{2}+C \left\|\langle v\rangle^{k} \partial_{v_{j}}\partial_{i}f \right\|_{L^{2}_{x,v}} \left\|\langle v\rangle^{k} \nabla^{2}_{x} f \right\|_{L^{2}_{x,v}}.
\ea\ee
Integrating by parts, we arrive at
$$\ba
III'
 &=
\f12 \iint_{\mathbb{R}^{d}\times\mathbb{R}_{v}^{d}} u (\partial_{i}\partial_{v_{j}}f)^{2}
\langle v\rangle^{2k-2}vdxdv- n k
\iint_{\mathbb{R}^{d}\times\mathbb{R}_{v}^{d}} (\partial_{i}\partial_{v_{j}}f)^{2}
\langle v\rangle^{2k}dxdv \\
&\quad
-k \iint_{\mathbb{R}^{d}\times\mathbb{R}_{v}^{d}}|v|^{2} (\partial_{i}\partial_{v_{j}}f)^{2}
\langle v\rangle^{2k-2}dxdv.
\ea$$
from which it follows that
\be\ba\label{3.39v1}
|III'|\leq C\| u\|_{L^{\infty} _{x}}\left\|\langle v\rangle^{k} \partial_{i}\partial_{v_{j}}f \right\|_{L^{2}_{x,v}}^{2}+C \left\|\langle v\rangle^{k} \partial_{i}\partial_{v_{j}}f \right\|_{L^{2}_{x,v}}^{2}.
\ea\ee
Consequently, we deduce from \eqref{3.37v1}-\eqref{3.39v1} that
\be\ba\label{3.108}
&\frac{1}{2} \frac{d}{d t}\left\|\langle v\rangle^{k} \partial_{i}\partial_{v_{j}}f \right\|_{L^{2}_{x,v}}^{2} \\
&\leq
C \|\nabla u\|_{L^{\infty} _{x}}\left\|\langle v\rangle^{k} \nabla_{v}\partial_{v_{j}}f \right\|_{L^{2}_{x,v}} \left\|\langle v\rangle^{k} \partial_{i}\partial_{v_{j}}f \right\|_{L^{2}_{x,v}}
+C \left\|\langle v\rangle^{k} \partial_{v_{j}}\partial_{i}f \right\|_{L^{2}_{x,v}}^{2} \\
&\quad
+C\| u\|_{L^{\infty}_{x} }\left\|\langle v\rangle^{k} \partial_{i}\partial_{v_{j}}f \right\|_{L^{2}_{x,v}}^{2}+C \left\|\langle v\rangle^{k} \partial_{v_{j}}\partial_{i}f \right\|_{L^{2}_{x,v}} \left\|\langle v\rangle^{k} \nabla^{2}_{x} f \right\|_{L^{2}_{x,v}}.
\ea\ee
Applying $\partial_{v_{i}}\partial_{v_{j}}$ to the Vlasov equation $\eqref{NSV}_{2}$ yields
$$\ba
\partial_{v_{i}}\partial_{v_{j}}f_{t}
+\partial_{v_{j}}v\cdot\nabla_{x}\partial_{v_{i}}f
+\partial_{v_{i}}v\cdot\nabla_{x}\partial_{v_{j}}f
&+v\cdot\nabla_{x}\partial_{v_{j}}\partial_{v_{i}}f -\partial_{v_{j}}v \cdot\nabla_{v}\partial_{v_{i}}f
\\&-\partial_{v_{i}}v \cdot\nabla_{v}\partial_{v_{j}}f+(u-v)\cdot\nabla_{v}\partial_{v_{j}}\partial_{v_{i}}f
=d\partial_{v_{j}}\partial_{v_{i}}f.
\ea$$
Taking the $L^2$ inner product of the above equation with $\partial_{v_{i}}\partial_{v_{j}}f \langle v\rangle^{2k}$, we discover that
\be\ba\label{3.110}
&\frac{1}{2} \frac{d}{d t}\left\|\langle v\rangle^{k} \partial_{v_{i}}\partial_{v_{j}}f \right\|_{L^{2}_{x,v}}^{2}\\
&=
\iint_{\mathbb{R}^{d}\times\mathbb{R}_{v}^{d}} \partial_{v_{j}}v\cdot\nabla_{v} \partial_{v_{j}}f\partial_{v_{i}}\partial_{v_{j}}f \langle v\rangle^{2k}dxdv+\iint_{\mathbb{R}^{d}\times\mathbb{R}_{v}^{d}} \partial_{v_{i}}v \cdot\nabla_{v}\partial_{v_{i}}f\partial_{v_{i}}\partial_{v_{j}}f \langle v\rangle^{2k}dxdv\\
&\quad
-\iint_{\mathbb{R}^{d}\times\mathbb{R}_{v}^{d}} (u-v)\cdot\nabla_{v}\partial_{v_{i}}\partial_{v_{j}}f
\partial_{v_{i}}\partial_{v_{j}}f \langle v\rangle^{2k}dxdv+
d\iint_{\mathbb{R}^{d}\times\mathbb{R}_{v}^{d}} \partial_{v_{j}}\partial_{v_{i}}f\partial_{v_{i}}\partial_{v_{j}}f \langle v\rangle^{2k}dxdv\\
&\quad
-\iint_{\mathbb{R}^{d}\times\mathbb{R}_{v}^{d}} \partial_{v_{j}}v\cdot\nabla_{x}\partial_{v_{i}}f\partial_{v_{i}}\partial_{v_{j}}f \langle v\rangle^{2k}dxdv
-\iint_{\mathbb{R}^{d}\times\mathbb{R}_{v}^{d}} \partial_{v_{i}}v\cdot\nabla_{x}\partial_{v_{j}}f\partial_{v_{i}}\partial_{v_{j}}f \langle v\rangle^{2k}dxdv\\
&=: I''+II''+III''+IV''+V''+VI''.
\ea\ee
Here we have used
$\iint_{\mathbb{R}^{d}\times\mathbb{R}_{v}^{d}}
v\cdot\nabla_{x}\partial_{v_{j}}\partial_{v_{i}}f\partial_{v_{i}}\partial_{v_{j}}f \langle v\rangle^{2k}dxdv=0.$

A straightforward calculation means
\be\ba\label{3.111}
|I''|+|II''|+ |IV''|\leq C \left\|\langle v\rangle^{k} \partial_{v_{i}}\partial_{v_{j}}f \right\|_{L^{2}_{x,v}}^{2}+C \left\|\langle v\rangle^{k} \partial_{v_{i}}\partial_{v_{j}}f \right\|_{L^{2}_{x,v}} \left\|\langle v\rangle^{k} \nabla_{x}\partial_{v_{i}} f \right\|_{L^{2}_{x,v}} .
\ea\ee
By means of integrations by parts, one obtains
$$\ba
III''
&=
k \iint_{\mathbb{R}^{d}\times\mathbb{R}_{v}^{d}} u (\partial_{v_{i}}\partial_{v_{j}}f)^{2}
\langle v\rangle^{2k-2}vdxdv+\f{d   }{2}
\iint_{\mathbb{R}^{d}\times\mathbb{R}_{v}^{d}} (\partial_{v_{i}}\partial_{v_{j}}f)^{2}
\langle v\rangle^{2k}dxdv \\
&\quad
-k \iint_{\mathbb{R}^{d}\times\mathbb{R}_{v}^{d}} -|v|^{2} (\partial_{v_{i}}\partial_{v_{j}}f)^{2}
\langle v\rangle^{2k-2}dxdv.
\ea$$
which implies that
\be\ba\label{3.112}
|III''|\leq C\| u\|_{L^{\infty} _{x}}\left\|\langle v\rangle^{k} \partial_{v_{i}}\partial_{v_{j}}f \right\|_{L^{2}_{x,v}}^{2}+C \left\|\langle v\rangle^{k} \partial_{v_{i}}\partial_{v_{j}}f \right\|_{L^{2}_{x,v}}^{2}.
\ea\ee
Putting estimates \eqref{3.110}-\eqref{3.112} together, we know that
\be\ba\label{3.114}
\frac{1}{2} \frac{d}{d t}\left\|\langle v\rangle^{k} \partial_{v_{i}}\partial_{v_{j}}f \right\|_{L^{2}_{x,v}}^{2}
&\leq
C\| u\|_{L^{\infty} _{x}}\left\|\langle v\rangle^{k} \partial_{v_{i}}\partial_{v_{j}}f \right\|_{L^{2}_{x,v}}^{2}
+C \left\|\langle v\rangle^{k} \partial_{v_{i}}\partial_{v_{j}}f \right\|_{L^{2}_{x,v}}^{2} \\
&\quad
+ C \left\|\langle v\rangle^{k} \partial_{v_{i}}\partial_{v_{j}}f \right\|_{L^{2}_{x,v}} \left\|\langle v\rangle^{k} \nabla_{x}\partial_{v_{i}} f \right\|_{L^{2}_{x,v}}.
\ea\ee
It follows from
\eqref{3.101}, \eqref{3.108} and \eqref{3.114} that
\be\ba\label{3.49v1}
\frac{1}{2} \frac{d}{d t}\left\|\langle v\rangle^{k} D^{2}f \right\|_{L^{2}_{x,v}}^{2}
&\leq
C\|\partial_{i}\partial_{j} u\|_{L^{4} _{x} }\left\|\langle v\rangle^{k} \nabla^{2}f \right\|_{L^{2}_{x,v}}^{\f{n}{4}+1}
+ C\|\nabla u\|_{L^{\infty} _{x} }\left\|\langle v\rangle^{k} \nabla^{2}f \right\|_{L^{2}_{x,v}}^{2} \\
&\quad
+ C\left\|\langle v\rangle^{k} \nabla^{2}f \right\|_{L^{2}_{x,v}}^{2}
+C\| u\|_{L^{\infty}_{x}}\left\|\langle v\rangle^{k} \nabla^{2}f \right\|_{L^{2}_{x,v}}^{2}.
\ea\ee
Finally, applying Gr\"onwall's inequality \eqref{gronwall3} for $n<4$  and \eqref{gronwall1} for $n=4$ in Lemma \ref{gronwall} yields the desired result.
\end{proof}

\begin{proof}[Proof of Theorem \ref{newthe1.1}] We finish the proof this theorem  via estimate  \eqref{3.7v1}, \eqref{pro2v1}, and
\eqref{3.49v1}.
\end{proof}
\section{Regularity of weak solutions of the 3D Navier--Stokes--Vlasov system}

This section is devoted to the proof of Theorem \ref{the1.1}. Building on the moment propagation results established in Theorem \ref{newthe1.1}, we first prove regularity criteria in terms of the velocity field and its gradient in Subsection 4.1. Then, in Subsection 4.2, we derive regularity criteria based on the pressure, using the results obtained in Subsection 4.1.

\subsection{LPS regularity
criteria involving the velocity}

\begin{lemma}\label{serrin}
Let $u_{0}\in H^{1}(\mathbb{R}^{3})$, $f_{0}v^{3}, f_{0}v^{5}\in L^{1}(\mathbb{R}^{3}\times\mathbb{R}_{v}^{3})$.
Let $(u,f)$ be the weak solutions to the Navier--Stokes--Vlasov system $\eqref{NSV}_2$.
If the velocity field satisfies one of the following conditions,
\begin{enumerate}[(1)]
\item
$u\in L^{p}(0,T;L^{q}(\mathbb{R}^{3}))$ with $2/p+3/q=1$, $q\geq24/7$;
\item $\nabla u\in L^{p}(0,T;L^{q}(\mathbb{R}^{3}))$ with $2/p+3/q=2$, $q\geq8/5$,
\end{enumerate}
then there hold
\be\ba\label{4.1} \nabla u\in L^{\infty}(0,T;L^{2}(\mathbb{R}^{3})), \nabla^{2} u\in L^{2}(0,T;L^{2}(\mathbb{R}^{3})),u\in L^{2}(0,T;L^{\infty}(\mathbb{R}^{3})).
\ea\ee
\end{lemma}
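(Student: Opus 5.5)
The plan follows the strategy outlined after Theorem \ref{the1.1}. Everything rests on an $H^1$ energy estimate for the fluid: test $\eqref{NSV}_1$ with $-\Delta u$ to get
\begin{equation*}
\frac12\frac{d}{dt}\|\nabla u\|_{L^2}^2+\|\nabla^2 u\|_{L^2}^2\le \Big|\int u\cdot\nabla u\cdot\Delta u\,dx\Big|+\big(\|j\|_{L^2}+\|nu\|_{L^2}\big)\|\Delta u\|_{L^2}.
\end{equation*}
Under condition (1) or (2) the convection term is handled exactly as in the classical LPS argument. For (1), H\"older, Gagliardo--Nirenberg and Young give
\begin{equation*}
\|u\|_{L^q}\|\nabla u\|_{L^{2q/(q-2)}}\|\Delta u\|_{L^2}\le \tfrac14\|\nabla^2u\|_{L^2}^2+C\|u\|_{L^q}^{p}\|\nabla u\|_{L^2}^2.
\end{equation*}
Case (2) is analogous, with $\nabla u\in L^pL^q$. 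Since $n\ge0$, I would move the drag term $nu$ to the left-hand side or bound it by $\|n\|_{L^2}\|u\|_{L^\infty}$. It therefore suffices to show $n,j\in L^\infty(0,T;L^2)$, or at least $j\in L^2(0,T;L^2)$ and $n\in L^\infty L^2$. Gr\"onwall (Lemma \ref{gronwall}) then gives $\nabla u\in L^\infty L^2$ and $\nabla^2u\in L^2L^2$. Finally, $u\in L^2L^\infty$ follows from $\|u\|_{L^\infty}\le C\|\nabla u\|_{L^2}^{1/2}\|\nabla^2u\|_{L^2}^{1/2}$.

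The moment bounds come from \eqref{3.2} with $d=3$, $p=2$. This gives $n\in L^\infty L^2$ from $f_0|v|^3\in L^1$ and $u\in L^1L^6$, and $j\in L^\infty L^2$ from $f_0|v|^5\in L^1$ and $u\in L^1L^8$. It is therefore enough to prove $u\in L^1(0,T;L^8)$. When $q\ge8$ this is immediate from H\"older in time, since $L^pL^q$ with $2/p+3/q=1$ interpolates with $L^\infty L^2$ to give $L^{p'}L^8$ with $p'\ge1$. Case (2) reduces to case (1) via the Sobolev embedding $\nabla u\in L^pL^q\Rightarrow u\in L^pL^{3q/(3-q)}$ for $q<3$, and $8/5$ maps to $24/7$. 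For $q\ge3$ one interpolates with the energy norm.

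The main obstacle is the range $24/7\le q<8$, where $u\in L^1L^8$ is not given. Here I would use the decomposition from the introduction. First note that $j$ is at least in $L^2(0,T;L^{3/2})$ by \eqref{03.2}, using only the energy-level moment $f|v|^2\in L^\infty L^1$ and \eqref{2.3} with $\ell=2$, $k=1$. Let $w$ solve the time-dependent Stokes system $\partial_tw-\Delta w+\nabla\pi=j$ with zero data. Giga--Sohr then gives $\nabla^2w\in L^2L^{3/2}$, hence $w\in L^2L^{\tilde q}$ for every $\tilde q<\infty$ by Sobolev embedding and Gagliardo--Nirenberg.

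Next, $U=u-w$ solves Stokes with force $-u\cdot\nabla u-nu$ and data $u_0$. I would write this force as $hu$-type terms, with $u\in L^pL^q$, $\nabla u\in L^2L^2$ and $n\in L^\infty L^{5/3}$ (from $\ell=2$). Giga--Sohr maximal regularity in a mixed norm $L^{s}_tL^{r}_x$ then gives $\nabla^2U\in L^sL^r$, which embeds into $L^1L^8$ after interpolation with the energy space. The exponent bookkeeping is where $q\ge24/7$ should enter: one needs the force in some $L^sL^r$ whose parabolic scaling $2/s+3/r$ is at most $2+3/8+\dots$, enough to reach $L^1L^8$. Possibly this requires bootstrapping, with $n,j$ improved from an intermediate $L^1L^{m}$ bound on $u$ through \eqref{3.2}. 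Once $u\in L^1L^8$ holds, the $H^1$ estimate above closes and yields \eqref{4.1}.
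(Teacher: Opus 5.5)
\textbf{There is a genuine gap in the range $24/7\le q<8$.} The rest of your reduction is sound and matches the paper: the $H^1$ estimate with the drag term bounded by $\|n\|_{L^2}\|u\|_{L^\infty}$, the reduction to $u\in L^1(0,T;L^8)$ via \eqref{3.2}, the case $q\ge 8$, and the passage from (2) to (1). (Moving the drag term to the left does not work, since $\int nu\cdot\Delta u\,dx$ has no sign.) The range $24/7\le q<8$ is the real content of the lemma. There you only sketch a scheme, and with the exponents you actually wrote down it does not close.

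\emph{The bound on $j$.} \eqref{2.3} with $\ell=2$, $k=1$ gives $j\in L^\infty L^{5/4}$, not $L^{3/2}$. Since $\dot W^{2,5/4}(\mathbb{R}^3)\subset L^{15/2}$, your $w$ never reaches $L^8$. You need the third moment: $f_0|v|^3\in L^1$ and $u\in L^1L^6$ give $f|v|^3\in L^\infty L^1$ by \eqref{2.16}, hence $j\in L^\infty L^{3/2}$ by \eqref{2.3} with $\ell=3$.

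\emph{The bound on $n$.} In the forcing for $U$ you must use $n\in L^\infty L^2$, not $L^{5/3}$; with $L^{5/3}$ the argument below only works for $q\ge 120/23$. With $n\in L^\infty L^2$, H\"older gives $-u\cdot\nabla u-nu\in L^{2q/(2q-3)}_tL^{2q/(q+2)}_x$, and Giga--Sohr puts $\nabla^2U$ in the same space.

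\emph{Where $24/7$ enters.} The parabolic scaling $2/s+3/r$ of this space is $7/2$ for every $q$, so a scaling count cannot produce the threshold. It comes from the spatial exponent. We have $\dot W^{2,2q/(q+2)}\subset L^{6q/(6-q)}$, and $6q/(6-q)\ge 8$ exactly when $q\ge 24/7$. Interpolation with $L^\infty L^2$ can only lower the spatial exponent, so below $24/7$ nothing reaches $L^8$. At $q=24/7$ you land in $L^{48/27}L^8$; above it you interpolate as in \eqref{4.16v3}. No bootstrapping is needed.

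\textbf{Comparison with the paper.} With these corrections your decomposition $u=U+w$ does work, but it is unnecessary. The paper never splits off $w$. It applies \eqref{3.2} with $p=2q/(q+2)$, $k=1$, using $u\in L^2L^{8q/(q+2)}$ and $f_0|v|^{8q/(q+2)-3}\in L^1$. The bound on $u$ comes from the energy class when $q\le 6$, and from interpolating $L^2L^q$ with $L^\infty L^2$ when $6\le q\le 8$. This gives $j\in L^\infty L^{2q/(q+2)}$ directly, so $j$ already lies in the same space as the other two forcing terms. A single Giga--Sohr estimate for the full equation \eqref{4.15} then gives $\nabla^2 u\in L^{2q/(2q-3)}L^{2q/(q+2)}$. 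Sobolev or Gagliardo--Nirenberg as in \eqref{4.16v3} yields $u\in L^1L^8$.
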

\begin{remark}Roughly speaking,
with the critical estimate $u\in L^{2}(0,T;L^{\infty}(\mathbb{R}^{3}))$, we immediately conclude by moment estimates \eqref{3.2} that, for any $1\leq p<\infty$,
$$n, j \in L^{\infty}(0, T; L^{p}(\mathbb{R}^{3} \times \mathbb{R}^{3})).$$
This observation will be
frequently used in this section.
\end{remark}
\begin{proof}
Multiplying the velocity equation by $-\Delta u$ and integrating over $\mathbb{R}^{3}$, we use integration by parts and the incompressibility condition to obtain
\be\ba\label{3.27}
\f12\f{d}{dt}\int_{\mathbb{R}^{3}}|\nabla u|^{2}dx+\int_{\mathbb{R}^{3}}|\nabla^{2}u|dx
&= \int_{\mathbb{R}^{3}} u\cdot\nabla u \Delta udx+ \int_{\mathbb{R}^{3}}un \Delta udx-\int_{\mathbb{R}^{3}}j \Delta udx \\
&=: I+II+III.
\ea\ee
By H\"older's and Young's inequalities, we estimate the term $II$ as
\be\ba\label{4.10}
|II|&\leq \|u\|_{L^{\infty}(\mathbb{R}^{3})} \|n\|_{L^{2}(\mathbb{R}^{3})} \|\nabla^{2}u\|_{L^{2}(\mathbb{R}^{3})}.
\ea\ee
Using the Gagliardo--Nirenberg inequality together with the Sobolev embedding, we get
$$\ba
\|u\|_{L^{\infty}(\mathbb{R}^{3})}\leq C\|u\|^{\f12}_{L^{6}(\mathbb{R}^{3})}
\|\nabla^{2}u\|_{L^{2}(\mathbb{R}^{3})}^{\f12}\leq C\|\nabla u\|^{\f12}_{L^{2}(\mathbb{R}^{3})}
\|\nabla^{2}u\|_{L^{2}(\mathbb{R}^{3})}^{\f12}
\ea$$
Substituting this into \eqref{4.10} yields
\be\ba\label{4.11}
|II|
&\leq
\|u\|_{L^{\infty}(\mathbb{R}^{3})} \|n\|_{L^{2}(\mathbb{R}^{3})} \|\nabla^{2}u\|_{L^{2}(\mathbb{R}^{3})} \\
&\leq
C\|\nabla u\|^{\f12}_{L^{2}(\mathbb{R}^{3})}
\|\nabla^{2}u\|_{L^{2}(\mathbb{R}^{3})}^{\f32} \|n\|_{L^{2}(\mathbb{R}^{3})}\\
&\leq
C \|n\|^{4}_{L^{2}(\mathbb{R}^{3})}\|\nabla u\|^{2}_{L^{2}(\mathbb{R}^{3})}+\f{1}{32}\|\nabla^{2}u\|_{L^{2}(\mathbb{R}^{3})}^{2}.
\ea\ee
By H\"older's and Young's inequalities, we estimate the term $III$ as
\be\ba\label{3.34}
|III|
&\leq
\|j\|_{L^{2}(\mathbb{R}^{3})} \|\nabla^{2}u\|_{L^{2}(\mathbb{R}^{3})}\\
&\leq
C\|j\|^{2}_{L^{2}(\mathbb{R}^{3})} +\f{1}{32}\|\nabla^{2}u\|_{L^{2}(\mathbb{R}^{3})}^{2}.
\ea\ee
(1) $u\in L^{p}(0,T;L^{q}(\mathbb{R}^{3}))$ with $2/p+3/q=1$ for
$\f{24}{7} \leq q\leq\infty$.

By Young's inequality, we estimate
\be\ba\label{3.28}
|I|\leq C\int _{\mathbb{R}^{3}}|u|^{2}|\nabla u|^{2}+\f{1}{64}\| \nabla^{2}u\|^{2}_{L^{2}(\mathbb{R}^{3})}.
\ea\ee
In the light of the H\"older inequality, we notice that
\be\ba\label{3.29}
\int _{\mathbb{R}^{3}}|u|^{2}|\nabla u|^{2} dx &\leq
\|u\|^{2}_{L^{ q }(\mathbb{R}^{3})}\| \nabla u\|^{2}_{L^{\f{2q}{q-2} } (\mathbb{R}^{3})}.
\ea\ee
By the Gagliardo--Nirenberg interpolation inequality, we have
\be\label{3.30}
\|\nabla u \| _{L^{\f{2q}{q-1} }(\mathbb{R}^{3})}\leq C\| \nabla u \|^{1-\f{3}{q}}_{L^{2}(\mathbb{R}^{3})}\|\nabla^{2} u\|^{\f{3}{q}}_{L^{2}(\mathbb{R}^{3})}.\ee
Combining \eqref{3.29} and \eqref{3.30}, and using Young's inequality, we get
\be\ba\label{3.31}
\int _{\mathbb{R}^{3}}|u|^{2}|\nabla u|^{2} dx
&\leq
\|u\|^{2}_{L^{ q }(\mathbb{R}^{3})}\| \nabla u\|^{2}_{L^{\f{2q}{q-2} } (\mathbb{R}^{3})} \\
&\leq
C\| u\|^{2}_{L^{q }(\mathbb{R}^{3})}\| \nabla^2 u \|^{\f6q}_{L^{2}(\mathbb{R}^{3})}\| \nabla u\|^{2-\f{6}{q}}_{L^{ 2} (\mathbb{R}^{3})}\\
&\leq
C\| u\|^{\f{2q}{q-3}}_{L^{q }(\mathbb{R}^{3})}\| \nabla u\|^{2}_{L^{ 2} (\mathbb{R}^{3})}+\f{1}{64}\| \nabla^2 u \|^{2}_{L^{2}(\mathbb{R}^{3})}.
\ea\ee
Substituting \eqref{3.31} into \eqref{3.28}, we obtain
\be\ba\label{3.32}
|I|\leq C\| u\|^{\f{2q}{q-3}}_{L^{q }(\mathbb{R}^{3})}\| \nabla u\|^{2}_{L^{ 2} (\mathbb{R}^{3})}+\f{1}{32}\| \nabla^{2}u\|^{2}_{L^{2}(\mathbb{R}^{3})}.
\ea\ee
Plugging \eqref{4.11} and \eqref{3.34} into \eqref{3.27}, we conclude
\be\ba\label{4.120}
&\f12\f{d}{dt} \int_{\mathbb{R}^{3}}|\nabla u|^{2}dx+\f12\int_{\mathbb{R}^{3}}|\nabla^{2}u|dx\\
&\leq
C\| u\|^{\f{2q}{q-3}}_{L^{q }(\mathbb{R}^{3})}\| \nabla u\|^{2}_{L^{ 2} (\mathbb{R}^{3})}+C\|n\|^{4}_{L^{2}(\mathbb{R}^{3})}\|\nabla u\|^{2}_{L^{2}(\mathbb{R}^{3})}+\|j\|^{2}_{L^{2}(\mathbb{R}^{3})}.
\ea\ee
With this in hand, it suffices to show that $j\in L^{2}(0,T;L^{2}(\mathbb{R}^{3}))$ and $n\in L^{4}(0,T;L^{2}(\mathbb{R}^{3}))$.
The key tool is the moment propagation result \eqref{03.1}.
In what follows, we divide the proof into three cases depending on the range of the exponent $q$.

$\bullet$ $u\in L^{p}(0,T;L^{q}(\mathbb{R}^{3}))$ with $2/p+3/q=1$ with $q\geq8$.

From moment estimate \eqref{3.2}, we deduce
\be\ba\label{4.160v3}
 \|n\|_{L^{\infty}(0,T;L_{x}^{2})} \leq C \|f_{0}\|_{L^{\infty}_{ x,v}}^{\f{ 1}{2}}\B[ \|f_{0} |v|^{3 } \| _{L^{1}_{x,v} }+ C \|f_{0} \|_{L^{\infty}_{x,v}} \B(\int_{0}^{T}\|u\|_{L_{x}^{6 }}ds\B)^{6}\B]^{\f{1}{2}},
\ea\ee
and
\be\ba\label{4.160}  \|j\|_{L^{\infty}(0,T;L_{x}^{2})} \leq C \|f_{0}\|_{L^{\infty}_{ x,v}}^{\f{ 1}{2}}\B[ \|f_{0} |v|^{5 } \| _{L^{1}_{x,v} }+ C \|f_{0} \|_{L^{\infty}_{x,v}} \B(\int_{0}^{T}\|u\|_{L_{x}^{8 }}ds\B)^{8}\B]^{\f{1}{2}}.
\ea\ee
These estimates imply that
\[
n\in L^{4}(0,T;L^{2}(\mathbb{R}^{3})), \quad j\in L^{2}(0,T;L^{2}(\mathbb{R}^{3})).
\]
Thus, by applying Gr\"onwall's inequality \eqref{gronwall1}   in Lemma \ref{gronwall}, we conclude by \eqref{4.120} that
$\nabla u\in L^{\infty}(0,T;L^{2}(\mathbb{R}^{3}))$ and $\nabla^{2} u\in L^{2}(0,T;L^{2}(\mathbb{R}^{3}))$ provided that $u\in L^{p}(0,T;L^{q}(\mathbb{R}^{3}))$ with $2/p+3/q=1$ with $q\geq8$.

$\bullet$ $u\in L^{p}(0,T;L^{q}(\mathbb{R}^{3}))$ with $2/p+3/q=1$ for $\f{24}{7}\leq q \leq6$.

From the preceding analysis, we observe that the restriction $q \geq 8$ originated from the requirement $j\in L^{2}(0,T;L^{2}(\mathbb{R}^{3}))$.
In this case, our goal is to establish that $u\in L^{1}(0,T;L^{8}(\mathbb{R}^{3}))$.
It is straightforward to show that $\f{8q}{q+2}\leq 6$ in this case.
Since $u\in L^{\infty}(0,T;L^{2}(\mathbb{R}^{3}))\cap L^{2}(0,T;L^{6}(\mathbb{R}^{3}))$, interpolation gives $u\in L^{2}(0,T;L^{\tilde{q}}(\mathbb{R}^{3}))$ for $\tilde{q}\in [2,6]$, and in particular, $u\in L^{2}(0,T;L^{\f{8q}{q+2}}(\mathbb{R}^{3}))$. It follows from \eqref{3.2}
that
$$\ba
 \|j\|_{L^{\infty}(0,T;L_{x}^{\f{2q}{q+2}})} \leq C \|f_{0}\|_{L^{\infty}_{ x,v}}^{\f{ q-2}{2q}}\B[ \|f_{0} |v|^{\f{8q}{q+2}-3 } \| _{L^{1}_{x,v} }+ C \|f_{0} \|_{L^{\infty}_{x,v}} \B(\int_{0}^{T}\|u\|_{L_{x}^{\f{8q}{q+2}}ds}\B)^{\f{8q}{q+2}}\B]^{\f{q+2}{2q} }.
\ea$$
This implies that
\be\label{4.13}
j\in L^{\f{2q}{2q-3}}(0,T;L^{\f{2q}{q+2}}(\mathbb{R}^{3})).
\ee
By H\"older's inequality, we estimate
\be\ba\label{4.14}
&\|-u \cdot \nabla u-nu\|_{L^{\f{2q}{2q-3}}(0,T;L^{\f{2q}{q+2}}(\mathbb{R}^{3}))} \\
&\leq C
\|u\| _{L^{\f{2q}{q-3}}(0,T;L^{q}(\mathbb{R}^{3}))}(\|n\|_{L^{2}(0,T;L^{2}(\mathbb{R}^{3}))}+ \|\nabla u\|_{L^{2}(0,T;L^{2} (\mathbb{R}^{3}))}).
\ea\ee
We now reformulate the velocity equations $\eqref{NSV}_{1}$ as
\be\ba\label{4.15}
&\partial_t u -  \Delta u +\nabla \Pi = -u \cdot \nabla u-nu+j.
\ea\ee
Combining \eqref{4.13}-\eqref{4.15} and applying the $L_{t}^{p}L_{x}^{q}$ estimates due to Giga and Sohr, we deduce
\be\label{4.16}
\nabla^{2}u\in L^{\f{2q}{2q-3}}(0,T;L^{\f{2q}{q+2}}(\mathbb{R}^{3})).
\ee
For the borderline case $q=\f{24}{7}$, we use the Sobolev embedding twice to conclude
$$\| u\| _{L^{8}(\mathbb{R}^{3})}\leq C \|\nabla u\| _{L^{\f{24}{11}}(\mathbb{R}^{3})}\leq C \|\nabla^{2}u\| _{L^{\f{24}{19}}(\mathbb{R}^{3})}.$$
Integrating in time gives
$$\| u\| _{L^{\f{48}{27}}(0,T;L^{8}(\mathbb{R}^{3}))}\leq C \|\nabla u\| _{L^{\f{48}{27}}(0,T;L^{\f{24}{11}}(\mathbb{R}^{3}))}\leq C \|\nabla^{2}u\| _{L^{\f{48}{27}}(0,T;L^{\f{24}{19}}(\mathbb{R}^{3}))},
$$
which shows that $u\in L^{1}(0,T;L^{8}(\mathbb{R}^{3}))$.

Let us now consider the rest case $\f{24}{7}<q\leq6$.
In this case, the Gagliardo--Nirenberg inequality yields
\be\label{4.16v3}
\|\nabla u\| _{L^{\f{24}{11}}(\mathbb{R}^{3})}\leq C \| u\| _{L^{2}(\mathbb{R}^{3})} ^{\f{7q-24}{8(3+q)}}\|\nabla^{2}u\| _{L^{\f{2q}{q+2}}(\mathbb{R}^{3})}^{\f{48+q }{8(3+q)}}.
\ee
We emphasize that this inequality is valid for $\f{24}{7} < q \leq 12$, and it will be used later as well.

Integrating \eqref{4.16v3} in time, we get
\be\label{4.18}
\|\nabla u\| _{L^{\f{2q}{2q-3}}(0,T;L^{\f{24}{11}}(\mathbb{R}^{3}))}\leq C \| u\| _{L^{\infty}(0,T;L^{2}(\mathbb{R}^{3}))} ^{\f{7q-24}{8(3+q)}}\|\nabla^{2}u\| _{L^{\f{2q}{2q-3}}(0,T;L^{\f{2q}{q+2}}(\mathbb{R}^{3}))}^{\f{48+q }{8(3+q)}}.
\ee
Then, by Sobolev embedding, we conclude that $u\in L^{\f{2q}{2q-3}}(0,T;L^{8}(\mathbb{R}^{3}))$.
This completes the proof for this case.

$\bullet$ $u\in L^{p}(0,T;L^{q}(\mathbb{R}^{3}))$ with $2/p+3/q=1$ for $6\leq q \leq8$.

From the assumption $u\in L^{\f{2q}{q-3}} (0,T;L^{q}(\mathbb{R}^{3}))$, we get $u\in L^{2} (0,T;L^{q}(\mathbb{R}^{3}))$.
Combined with $u\in L^{\infty}(0,T;L^{2}(\mathbb{R}^{3}))$, interpolation yields $u\in L^{2}(0,T;L^{\tilde{q}}(\mathbb{R}^{3}))$ for $\tilde{q}\in [2,q]$.
It is easy to check that $\f{8q}{q+2}\in [6.4,8]$ for $6\leq q \leq8$.
Therefore, we have $u\in L^{2}(0,T;L^{\f{8q}{q+2}}(\mathbb{R}^{3}))$.
As noted earlier, \eqref{4.16v3} is valid for $ \f{24}{7}<q \leq 12 $, which includes the current range of $q$.
Repeating the argument leading to \eqref{4.13}-\eqref{4.18}, we conclude that $u\in L^{\f{2q}{2q-3}}(0,T;L^{8}(\mathbb{R}^{3}))$.
This completes the proof for this case.

(2)
For $\frac{8}{5} \leq q < 3$, the conclusion follows directly from the Sobolev embedding.
Hence, it suffices to focus on the case $q \geq 3$.
Using integration by parts and the incompressibility condition, we obtain
\be\ba
\B|\int_{\mathbb{R}^{3}} u\cdot\nabla u \Delta udx\B|\leq
\int_{\mathbb{R}^{3}} |\nabla u|^{3}dx.
\ea\ee
From \eqref{3.30}, we have
\be\label{4.150}
\|\nabla u \|^{2}_{L^{\f{2q}{q-1} }(\mathbb{R}^{3})}\leq C\| \nabla u \|^{2-\f{3}{q}}_{L^{2}(\mathbb{R}^{3})}\|\nabla^{2} u\|^{\f{3}{q}}_{L^{2}(\mathbb{R}^{3})}.\ee
Applying H\"older's inequality, \eqref{4.150}, and Young's inequality, we estimate
\be\label{2.20}\ba
\int_{\mathbb{R}^{3}} |\nabla u|^{3}dx
&\leq
\|\nabla u\|_{L^{q }(\mathbb{R}^{3})}\|\nabla u \|^{2}_{L^{\f{2q}{q-1},2}(\mathbb{R}^{3})} \\
&\leq
C\|\nabla u\|_{L^{q }(\mathbb{R}^{3})}\| \nabla u \|^{2-\f{3}{q}}_{L^{2}(\mathbb{R}^{3})}\|\nabla^{2} u\|^{\f{3}{q}}_{L^{2}(\mathbb{R}^{3})}\\
&\leq
C \|\nabla u\|^{\f{2q}{2q-3}}_{L^{q }(\mathbb{R}^{3})}\|\nabla u\|_{L^{2}(\mathbb{R}^{3})}^{2}
+\f{1}{32}\| \nabla^{2}u\|^{2}_{L^{2}(\mathbb{R}^{3})}.
\ea\ee
Substituting this into the energy inequality \eqref{4.120}, we obtain
\be\ba
&\f12\f{d}{dt}\int_{\mathbb{R}^{3}}|\nabla u|^{2}dx+\int_{\mathbb{R}^{3}}|\nabla^{2}u|dx \\
&\leq
C \|\nabla u\|^{\f{2q}{2q-3}}_{L^{q }(\mathbb{R}^{3})}\|\nabla u\|_{L^{2}(\mathbb{R}^{3})}^{2}+\|n\|^{4}_{L^{2}(\mathbb{R}^{3})}\|\nabla u\|^{2}_{L^{2}(\mathbb{R}^{3})}+\|j\|^{2}_{L^{2}(\mathbb{R}^{3})} +\f{1}{8}\| \nabla^{2}u\|^{2}_{L^{2}(\mathbb{R}^{3})}.
\ea\ee
It remains to show that $n\in L^{4}(0,T;L^{2}(\mathbb{R}^{3}))$ and $j\in L^{2}(0,T;L^{2}(\mathbb{R}^{3}))$. To this end,
we apply the Gagliardo--Nirenberg inequality for $q\geq3$ to deduce that
\be
\ba
\|u\|_{L^{\f{2q}{2q-3}}(0,T;L^{8}(\mathbb{R}^{3}))}\leq C\|u\|_{L^{\infty}(0,T;L^{2}(\mathbb{R}^{3}))}^{\f{1-\f3q+\f38}{1-\f3q+\f32}}
\|\nabla u\|_{L^{\f{2q}{2q-3}}(0,T;L^{q}(\mathbb{R}^{3}))}^{\f{\f98}{1-\f3q+\f32}}.
\ea\ee
This together with \eqref{4.160} leads to $j\in L^{2}(0,T;L^{2}(\mathbb{R}^{3}))$. In addition, we derive from $ u\in L^{2}(0,T;L^{6}(\mathbb{R}^{3}))$ that $n\in L^{\infty}(0,T;L^{2}(\mathbb{R}^{3}))$ by \eqref{4.160v3}.

Finally, in all cases discussed above, we have shown that
\[
\nabla u\in L^{\infty}(0,T;L^{2}(\mathbb{R}^{3})), \quad \nabla^{2} u\in L^{2}(0,T;L^{2}(\mathbb{R}^{3})).
\]
Applying the Gagliardo-Nirenberg inequality once more, we conclude that $ u\in L^{2}(0,T;L^{\infty}(\mathbb{R}^{3}))$.
This completes the proof of the lemma.
\end{proof}
Next step is to deduce the Lipschitz control of the flow of the velocity field, which allows us to present the    gradient estimate of the particle distribution function (see Lemma \ref{lemma3.3}).
\begin{lemma}\label{lemma4.2}
Assume the conditions of Lemma \ref{serrin} hold.
Suppose further that the initial data satisfy $f_{0}v^{6+3\eta}, f_{0}v^{9+4\eta } \in L^{1}(\mathbb{R}^{3}\times\mathbb{R}_{v}^{3})$ for some sufficiently small constant $\eta > 0$.
Then we have $\nabla u\in L^{1}(0,T;L^{\infty} (\mathbb{R}^{3}))$.
\end{lemma}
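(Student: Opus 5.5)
Starting from Lemma \ref{serrin}, we already have $u\in L^{\infty}(0,T;H^{1}(\mathbb{R}^{3}))$, $\nabla^{2}u\in L^{2}(0,T;L^{2}(\mathbb{R}^{3}))$ and $u\in L^{2}(0,T;L^{\infty}(\mathbb{R}^{3}))$. The plan is to feed the critical bound $u\in L^{2}(0,T;L^{\infty})$ into the moment estimate \eqref{3.2}. This upgrades the integrability of $n$ and $j$ to $L^{\infty}(0,T;L^{r}(\mathbb{R}^{3}))$ for a finite exponent $r$ slightly larger than $3$. We then use the maximal regularity of the time-dependent Stokes system (Giga--Sohr) to put $\nabla^{2}u$ in $L^{s}(0,T;L^{r})$ for some $s\ge1$. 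Since $r>3$, Gagliardo--Nirenberg then gives $\nabla u\in L^{1}(0,T;L^{\infty})$.

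\textbf{Step 1: moments.} Take $p=3+\eta$ in \eqref{3.2}. With $k=0$ this requires $f_{0}|v|^{(3+\eta)\cdot 3-3}=f_{0}|v|^{6+3\eta}\in L^{1}$ and $u\in L^{1}(0,T;L^{9+3\eta})$. With $k=1$ it requires $f_{0}|v|^{(3+\eta)\cdot4-3}=f_{0}|v|^{9+4\eta}\in L^{1}$ and $u\in L^{1}(0,T;L^{12+4\eta})$. These moment hypotheses are exactly the ones assumed in the lemma, which strongly suggests this route. The integrability of $u$ follows by interpolating $u\in L^{\infty}(0,T;L^{2})\cap L^{2}(0,T;L^{\infty})$, which gives $u\in L^{2}(0,T;L^{\tilde q})$ for every $\tilde q\in[2,\infty]$. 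Hence $n,j\in L^{\infty}(0,T;L^{3+\eta}(\mathbb{R}^{3}))$.

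\textbf{Step 2: the forcing.} Rewrite $\eqref{NSV}_1$ as \eqref{4.15}, with right-hand side $F=-u\cdot\nabla u-nu+j$. Each term is placed in $L^{s}(0,T;L^{3+\eta})$ for some $s\ge1$:
\begin{itemize}
\item $j$ lies in $L^{\infty}_tL^{3+\eta}_x$ by Step 1.
\item $nu$ lies in $L^{2}_tL^{3+\eta}_x$, since $n\in L^{\infty}_tL^{3+\eta}_x$ and $u\in L^{2}_tL^{\infty}_x$.
\item $u\cdot\nabla u$ needs more care. We have $u\in L^{2}_tL^{\infty}_x$, while $\nabla u\in L^{\infty}_tL^{2}_x\cap L^{2}_tL^{6}_x$, which gives $\nabla u\in L^{4}_tL^{3}_x$ by interpolation. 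The exponent $3$ is not quite enough. One option is to bootstrap: first solve with forcing in $L^{s}_tL^{3}_x$, which gives $\nabla^{2}u\in L^{s}_tL^{3}$ and hence $\nabla u\in L^{s}_tL^{r}$ for all $r<\infty$ (from $\nabla u\in L^\infty L^2$ and $\nabla^2u\in L^sL^3$). Alternatively, interpolate $\nabla u$ between $L^{6}$ (from $\nabla^{2}u\in L^{2}L^{2}$) and $L^{2}$ with a slightly worse time exponent.
\end{itemize}
Either way, $u\cdot\nabla u\in L^{s}_tL^{3+\eta}_x$ for some $s>1$. We then invoke the Giga--Sohr estimate, using $u_0\in H^{1}$ together with the smoothing of the Stokes semigroup (or $u_0\in H^2\subset$ the relevant Besov trace space, as assumed in Theorem \ref{the1.1}). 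This yields $\nabla^{2}u\in L^{s}(0,T;L^{3+\eta})$.

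\textbf{Step 3: conclusion.} By Gagliardo--Nirenberg in $\mathbb{R}^{3}$,
\[
\|\nabla u\|_{L^{\infty}}\le C\|\nabla u\|_{L^{2}}^{1-\theta}\|\nabla^{2}u\|_{L^{3+\eta}}^{\theta}, \qquad \theta<1 .
\]
Combined with $\nabla u\in L^{\infty}_tL^{2}_x$, this gives $\nabla u\in L^{s/\theta}(0,T;L^{\infty})\subset L^{1}(0,T;L^{\infty})$.

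The main obstacle is Step 2. The nonlinear term $u\cdot\nabla u$ only barely reaches spatial integrability above $3$, and the time exponent must stay at least $1$. I would handle it with the two-step bootstrap through $L^{3}$ described above, controlling the initial-data contribution in the Giga--Sohr estimate with the $H^{1}$ (or $H^{2}$) regularity of $u_0$.
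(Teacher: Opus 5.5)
Your proposal is correct and follows the paper's proof essentially step for step. You take $p=3+\eta$ with $k=0,1$ in \eqref{3.2} to get $n,j\in L^\infty(0,T;L^{3+\eta})$, use H\"older for $nu$, then Giga--Sohr maximal regularity for $\nabla^2u\in L^s(0,T;L^{3+\eta})$, and finish with Gagliardo--Nirenberg. For the convective term the paper uses exactly your second option: it interpolates $\nabla u\in L^\infty L^2\cap L^2L^6$ directly into $L^{\frac{4(3+\eta)}{3+3\eta}}(0,T;L^{3+\eta})$, so that $u\cdot\nabla u\in L^{\frac{12+4\eta}{9+5\eta}}(0,T;L^{3+\eta})$ with time exponent $>1$, which makes the bootstrap through $L^3$ unnecessary.
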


\begin{proof}
Since $\nabla u\in L^{\infty}(0,T;L^{2}(\mathbb{R}^{3}))$ and $\nabla^{2} u\in L^{2}(0,T;L^{2}(\mathbb{R}^{3}))$, the classical interpolation inequality implies $\nabla u\in L^{\alpha}(0,T;L^{\beta}(\mathbb{R}^{3}))$ for all $\alpha$ and $\beta$ satisfying $2/\alpha+3/\beta=3/2$ and $2\leq\beta\leq6$.
In particular, we obtain, for $0<\eta<3$,
$$
\nabla u\in L^{\f{4(3+\eta)}{3+3\eta}}(0,T;L^{3+ \eta}(\mathbb{R}^{3})).
$$
By H\"older's inequality, we get
\be\ba\label{3.36}
\|u\cdot\nabla u\|_{L^{\f{12+4\eta}{9+5\eta}}(0,T;L^{3+3\eta}(\mathbb{R}^{3}))}\leq \|u\| _{L^{2}(0,T;L^{\infty}(\mathbb{R}^{3}))}\|\nabla u\|_{L^{\f{4(3+\eta)}{3+3\eta}}(0,T;L^{3+3\eta}(\mathbb{R}^{3}))}.
\ea\ee
In view of  moments propagation \eqref{3.2}, we have
\be\ba\label{3.370}
 \|n\|_{L^{\infty}(0,T;L_{x}^{3+\eta})} \leq C \|f_{0}\|_{L^{\infty}_{ x,v}}^{\f{2+\eta}{3+\eta}}\B[ \|f_{0} |v|^{6+3\eta } \| _{L^{1}_{x,v} }+ C \|f_{0} \|_{L^{\infty}_{x,v}} \B(\int_{0}^{T}\|u\|_{L_{x}^{9+3\eta }}ds\B)^{9+3\eta }\B]^{^{\f{1}{3+\eta}} },
\ea\ee
and
\be\ba\label{3.38} \|j\|_{L^{\infty}(0,T;L_{x}^{3+\eta})} \leq C \|f_{0}\|_{L^{\infty}_{ x,v}} ^{\f{2+\eta}{3+\eta}}\B[ \|f_{0} |v|^{9+4\eta } \| _{L^{1}_{x,v} }+ C \|f_{0} \|_{L^{\infty}_{x,v}} \B(\int_{0}^{T}\|u\|_{L_{x}^{12+4\eta }}ds\B)^{12+4\eta}\B]^{\f{1}{3+\eta} }.
\ea\ee
Since
\[
 \|u n\|_{L^{3+\eta} (\mathbb{R}^{3} )} \leqslant\|u\|_{L^{\infty} (\mathbb{R}^{3} )} \|n\|_{L^{3+\eta} (\mathbb{R}^{3} )},
\]
we have
\be\ba\label{3.39}
 \|u n\|_{L^{2}(0,T;L^{3+\eta} (\mathbb{R}^{3} ))} \leq C \|u\| _{L^{2}(0,T;L^{\infty}(\mathbb{R}^{3}))}  \|n\|_{L^{\infty}(0,T;L^{3+\eta} (\mathbb{R}^{3} \times \mathbb{R}_{v}^{3} ))}.
\ea\ee
Rewriting the velocity equation $\eqref{NSV}_1$ as
\be
\partial_t u-  \Delta u +\nabla \Pi = -u \cdot \nabla u -\int_{\mathbb{R}_{v}^{3}} f(u-v)dv=F,
\ee
and combining \eqref{3.36}, \eqref{3.38} and \eqref{3.39}, we conclude that
$F\in L^{\f{12+4\eta}{9+5\eta}}(0,T;L^{3+3\eta}(\mathbb{R}^{3}))$.
Therefore, by the maximal regularity estimates for the Stokes system in \cite{[GS]}, it follows that $u\in L^{\f{12+4\eta}{9+5\eta}}(0,T;W^{2,3+3\eta} (\mathbb{R}^{3}))$ and $u_0\in W^{1,3+3\eta}(\mathbb{R}^{3})\subseteq H^{2}(\mathbb{R}^{3})$.
Finally, applying the Gagliardo--Nirenberg inequality, we obtain $\nabla u\in L^{1}(0,T;L^{\infty} (\mathbb{R}^{3}))$.
This completes the proof of the lemma.
\end{proof}
The Lipschitz control of   the velocity field and  moment estimates \eqref{3.2} help us to get
the time derivative
of the velocity field.
\begin{lemma}\label{lemma4.3}
Assume the conditions of Lemma \ref{lemma4.2} hold.
Suppose in addition that $u_{0t}\in L^{2}(\mathbb{R}^{3})$ and $ \langle v\rangle^{k} \nabla_{x}f_{0}, \langle v\rangle^{k}\nabla_{v}f_{0} \in L^{2}(\mathbb{R}^{3}\times\mathbb{R}^{3}_{v})$ for some $k> 7/2$.
Then we have
\be\label{4.23} u_{t}\in L^{\infty}(0,T; L^{2}(\mathbb{R}^{3})),\nabla u_{t}\in L^{2}(0,T; L^{2}(\mathbb{R}^{3})),u\in L^{\infty}(0,T; L^{\infty}(\mathbb{R}^{3})).
\ee
\end{lemma}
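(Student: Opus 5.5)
We plan to differentiate the velocity equation in time and run an $L^2$ energy estimate for $u_t$. By Lemma \ref{serrin} and Lemma \ref{lemma4.2} we already have
\[
\nabla u\in L^{\infty}(0,T;L^{2}),\quad \nabla^{2}u\in L^{2}(0,T;L^{2}),\quad u\in L^{2}(0,T;L^{\infty}),\quad \nabla u\in L^{1}(0,T;L^{\infty}).
\]
Hence Lemma \ref{lemma3.3} with $p=2$ and the given $k>7/2$ yields $\langle v\rangle^{k}Df\in L^{\infty}(0,T;L^{2}_{x,v})$. The remark after Lemma \ref{serrin} together with \eqref{3.2} gives $n,j\in L^{\infty}(0,T;L^{p})$ for all $p<\infty$. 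By Lemma \ref{lem2.2} (with $d=3$, $p=2$, $r\le1$, $k>r+3/2$), the $x$-derivatives satisfy
\[
\nabla n,\ \nabla j\in L^{\infty}(0,T;L^{2}).
\]

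Next we need the time derivatives of the moments. Integrating the Vlasov equation in $v$ gives
\[
n_t=-\Div j,\qquad j_t=-\Div\int f v\otimes v\,dv+nu-j.
\]
Since $k>7/2=2+3/2$, Lemma \ref{lem2.2} (with $r=2$) controls $\nabla_x\int f v\otimes v\,dv$ in $L^{\infty}(0,T;L^{2})$. Therefore
\[
n_t\in L^{\infty}(0,T;L^{2}),\qquad j_t\in L^{\infty}(0,T;L^{2})+L^{2}(0,T;L^{p}).
\]
Here we use $u\in L^{2}L^{\infty}$ and $n\in L^{\infty}L^{p}$. To avoid placing derivatives on $f$ in $v$, we will keep the divergence form and pair it against $u_t$ through integration by parts.

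Differentiating $\eqref{NSV}_1$ in $t$ and testing with $u_t$ gives
\[
\f12\f{d}{dt}\|u_t\|_{L^2}^2+\|\nabla u_t\|_{L^2}^2
=-\int u_t\cdot\nabla u\cdot u_t-\int (nu)_t\cdot u_t+\int j_t\cdot u_t .
\]
We estimate the three terms as follows.
\begin{enumerate}[(i)]
\item The convective term is bounded by $\|\nabla u\|_{L^\infty}\|u_t\|_{L^2}^2$, which has an integrable coefficient.
\item For $(nu)_t=n_tu+nu_t$:
\begin{itemize}
\item $\int n|u_t|^2\ge0$ has a good sign and can be dropped.
\item $\int n_t u\cdot u_t$ is bounded by $\|n_t\|_{L^2}\|u\|_{L^\infty}\|u_t\|_{L^2}$.
\end{itemize}
\item For $\int j_tu_t$, the part $\int\int fv\otimes v\,dv:\nabla u_t$ is bounded via $\|\int f|v|^2dv\|_{L^2}\|\nabla u_t\|_{L^2}$. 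We absorb $\|\nabla u_t\|_{L^2}$ by Young's inequality; the coefficient $\|\int f|v|^2dv\|_{L^2}$ is finite by \eqref{3.2}.
\end{enumerate}
Gr\"onwall's Lemma \ref{gronwall} \eqref{gronwall1} then gives the first two claims of \eqref{4.23}, once we justify $u_t(0)\in L^2$. This follows from the hypothesis $u_{0t}\in L^2$, which is consistent with $u_0\in H^2$ and $n_0,j_0\in L^2$.

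For $u\in L^\infty L^\infty$, we view $\eqref{NSV}_1$ at each fixed $t$ as a stationary Stokes problem:
\[
-\Delta u+\nabla\Pi=-u_t-u\cdot\nabla u-nu+j .
\]
The right side lies in $L^\infty(0,T;L^2)$:
\begin{itemize}
\item $u_t\in L^\infty L^2$ by the previous step.
\item $u\cdot\nabla u\in L^\infty L^2$, using $\|u\|_{L^6}\|\nabla u\|_{L^3}$ and interpolating $\nabla u\in L^\infty L^2$ with the absorbed $\nabla^2u$ term.
\item $nu\in L^\infty L^2$, using $n\in L^\infty L^3$ and $u\in L^\infty L^6$.
\item $j\in L^\infty L^2$.
\end{itemize}
Stokes elliptic estimates then give $\nabla^2u\in L^\infty L^2$. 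Combined with $\nabla u\in L^\infty L^2$, Agmon's inequality $\|u\|_{L^\infty}\le C\|\nabla u\|_{L^2}^{1/2}\|\nabla^2u\|_{L^2}^{1/2}$ yields $u\in L^\infty L^\infty$.

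The main obstacle is the coupling term $\int(nu)_t\cdot u_t$. It requires $\|u\|_{L^\infty}$ only in $L^2_t$, and we must check that $\|n_t\|_{L^2}\|u\|_{L^\infty}\|u_t\|_{L^2}$ closes. Bounding it by $\|n_t\|_{L^2}^2\|u\|_{L^\infty}^2+\|u_t\|_{L^2}^2$ makes the forcing integrable in time, so the estimate does close. The delicate point is the moment count needed for $n_t$ and $j_t$ through Lemma \ref{lem2.2}; this is exactly where the assumption $k>7/2$ enters.
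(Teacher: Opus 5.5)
Your proposal is correct and follows essentially the paper's route: differentiate $\eqref{NSV}_{1}$ in time, test with $u_t$, control the kinetic forcing through Lemma \ref{lemma3.3}, Lemma \ref{lem2.2} and \eqref{3.2}, and close with Gr\"onwall. Your moment identities $n_t=-\Div j$ and $j_t=-\Div\int fv\otimes v\,dv+nu-j$ are exactly the paper's substitution of the Vlasov equation into $\int f_t(u-v)\,dv$, the remaining differences are minor (you drop $\int n|u_t|^2\ge 0$ by sign rather than bounding it with $\|n\|_{L^3}$, and you may move the $\Div\int fv\otimes v\,dv$ term onto $\nabla u_t$), and your Stokes-plus-Agmon step for $u\in L^{\infty}(0,T;L^{\infty})$ is the argument the paper actually carries out in Lemma \ref{lemma4.4}.
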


\begin{proof}% By the basic energy estimate of (1.8) and integrating by parts, we infer that
Applying the time derivative $\partial_{t}$ to the velocity equations yields
$$\ba \partial_{tt} u +u_{t} \cdot \nabla u+u \cdot \nabla u_{t} - \Delta u_{t} +\nabla \Pi_{t} =-\int_{\mathbb{R}_{v}^{3}} f u_{t}dv-\int_{\mathbb{R}_{v}^{3}} f_{t}(u-v)dv.
\ea$$
Using the Vlasov equation $\eqref{NSV}_{2}$, this equation becomes
$$\ba
&\partial_{tt} u+u \cdot \nabla u_{t} - \Delta u_{t} +\nabla \Pi_{t} \\
&=
-u_{t} \cdot \nabla u-\int_{\mathbb{R}_{v}^{3}} f u_{t}dv+\int_{\mathbb{R}_{v}^{3}}(u-v) \B[v\cdot\nabla_{x}f+\nabla_{v}\cdot[f(u-v) ]\B]dv.
\ea$$
Taking the $L^2$-inner product with $u_t$ and integrating over $\mathbb{R}^{3}$ yields
\be\ba\label{3.59}
&\f{d}{dt}\|u_{t}\|^{2}_{L^{2}(\mathbb{R}^{3})}+\|\nabla u_{t}\|^{2}_{L^{2}(\mathbb{R}^{3})}\\
&=
-\int_{\mathbb{R}^{3}}u_{t} \cdot \nabla uu_{t} dx- \int_{\mathbb{R}^{3}}|u_{t}|^{2} ndx+\int_{\mathbb{R}^{3}}u_{t}\int_{\mathbb{R}_{v}^{3}}(u-v) v\cdot\nabla_{x}fdvdx \\
&\quad
+\int_{\mathbb{R}^{3}}u_{t}
\int_{\mathbb{R}_{v}^{3}}(u-v) \nabla_{v}\cdot[f(u-v) ] dvdx.
\ea\ee
We estimate each term on the right-hand side.

Clearly, we have
\be\ba\label{3.60}
\B|-\int_{\mathbb{R}^{3}}u_{t} \cdot \nabla uu_{t} dx\B|
\leq \|\nabla u\|_{L^{\infty}(\mathbb{R}^{3})}\|u_{t}\|^{2}_{L^{2}(\mathbb{R}^{3})}.
\ea\ee
Using the H\"older inequality, Sobolev inequality, and the Young inequality, we get
\be\ba\label{3.61}
\B| - \int_{\mathbb{R}^{3}}|u_{t}|^{2} ndx\B|
&\leq
\|n\|_{L^{3}(\mathbb{R}^{3})}
\|u_{t}\|_{L^{2}(\mathbb{R}^{3})} \|u_{t}\|_{L^{6}(\mathbb{R}^{3})}\\
&\leq C\|n\|_{L^{3}(\mathbb{R}^{3})}
\|u_{t}\|_{L^{2}(\mathbb{R}^{3})} \|\nabla u_{t}\|_{L^{2}(\mathbb{R}^{3})} \\
&\leq C\|n\|^{2}_{L^{3}(\mathbb{R}^{3})}
\|u_{t}\|^{2}_{L^{2}(\mathbb{R}^{3})} + \f{1}{32}\|\nabla u_{t}\|^{2}_{L^{2}(\mathbb{R}^{3})}.
 \ea\ee
We split the third term on the right-hand side of \eqref{3.59} into two parts
\be\ba\label{3.62}
\int_{\mathbb{R}^{3}}\int_{\mathbb{R}_{v}^{3}}(u-v)(v\cdot\nabla_{x}f)dvu_{t}dx
= \int_{\mathbb{R}^{3}}uu_{t}\int_{\mathbb{R}_{v}^{3}}(v\cdot\nabla_{x}f)dvdx -\int_{\mathbb{R}^{3}} u_{t}\int_{\mathbb{R}_{v}^{3}}v( v \cdot\nabla_{x}f)dvdx.
 \ea\ee
By H\"older's inequality, we have
\be\ba \label{3.63}
\B|\int_{\mathbb{R}^{3}}uu_{t}\int_{\mathbb{R}_{v}^{3}}(v\cdot\nabla_{x}f)dvdx \B|\leq\| u\|_{L^{\infty}(\mathbb{R}^{3})}\|u_{t}\|_{L^{2}(\mathbb{R}^{3})}
\B\| \int_{\mathbb{R}_{v}^{3}}(|v||\nabla_{x}f| )dv\B\|_{L^{2}(\mathbb{R}^{3})},
\ea\ee
and
\be\ba\label{3.64}
\B|-\int_{\mathbb{R}^{3}} u_{t}\int_{\mathbb{R}_{v}^{3}}(|v|^{2}\cdot\nabla_{x}f)dvdx
\B|\leq \|u_{t}\|_{L^{2}(\mathbb{R}^{3})}
\B\| \int_{\mathbb{R}_{v}^{3}}|v|^{2}|\nabla_{x}f| dv\B\|_{L^{2}(\mathbb{R}^{3})}.
\ea\ee
Using Lemma \ref{lem2.2}, we see that for $k>5/2,$
\be\ba
\label{3.65}
\B\| \int_{\mathbb{R}_{v}^{3}}(|v||\nabla_{x}f| )dv\B\|_{L^{2}(\mathbb{R}^{3})}\leq C\|\langle v\rangle^{k}|\nabla_{x}f| \|_{L^{2}(\mathbb{R}^{3}\times\mathbb{R}_{v}^{3})},
\ea\ee
and for $k>7/2,$
\be\ba \label{3.66} \B\| \int_{\mathbb{R}_{v}^{3}}(|v|^{2}|\nabla_{x}f| )dv\B\|_{L^{2}(\mathbb{R}^{3})}\leq C\|\langle v\rangle^{k}|\nabla_{x}f| \|_{L^{2}(\mathbb{R}^{3}\times\mathbb{R}_{v}^{3})}.
\ea\ee
Plugging \eqref{3.65} into
\eqref{3.63}, we find
\be\ba \label{3.67} \B|\int_{\mathbb{R}^{3}}uu_{t}\int_{\mathbb{R}_{v}^{d}}(v\cdot\nabla_{x}f)dvdx \B|&\leq C\| u\|_{L^{\infty}(\mathbb{R}^{3})}\|u_{t}\|_{L^{2}(\mathbb{R}^{3})}
\|\langle v\rangle^{k} |\nabla_{x}f| \|_{L^{2}(\mathbb{R}^{3}\times\mathbb{R}_{v}^{3})}. \ea\ee
By the same way, we observe that
\be\ba \label{3.68}\B|-\int_{\mathbb{R}^{3}} u_{t}\int_{\mathbb{R}_{v}^{d}}(|v|^{2}\cdot\nabla_{x}f)dvdx
\B|&\leq C\|u_{t}\|_{L^{2}(\mathbb{R}^{3})}
\|\langle v\rangle^{k}|\nabla_{x}f| \|_{L^{2}(\mathbb{R}^{3}\times\mathbb{R}_{v}^{3})}. \ea\ee
From \eqref{3.67} and \eqref{3.68}, we know that
\be\ba\label{3.69}
&\B|\int_{\mathbb{R}^{3}}
\int_{\mathbb{R}_{v}^{3}}(u-v)(v\cdot\nabla_{x}f)dvu_{t}dx
\B|\\
\leq&C(\| u\|_{L^{\infty}(\mathbb{R}^{3})}+1)\|u_{t}\|_{L^{2}(\mathbb{R}^{3})}
\|\langle v\rangle^{k} |\nabla_{x}f| \|_{L^{2}(\mathbb{R}^{3}\times\mathbb{R}_{v}^{3})}.
\ea\ee
The last term on the right-hand side of \eqref{3.59}, after integration by parts, becomes
$$\ba
 \int_{\mathbb{R}^{3}}u_{t}\int_{\mathbb{R}_{v}^{3}}(u-v) \nabla_{v}[f(u-v) ] dvdx =
3\int_{\mathbb{R}^{3}}u_{t}u \int_{\mathbb{R}_{v}^{3}} f dvdx-3\int_{\mathbb{R}^{3}}u_{t}\int_{\mathbb{R}_{v}^{3}} f v dvdx.\ea$$
This together with the H\"older inequality ensures that
\be\ba\label{3.71}
&\B|\int_{\mathbb{R}^{3}}u_{t}\int_{\mathbb{R}_{v}^{3}}(u-v) \nabla_{v}[f(u-v) ] dvdx\B|
\\
&\leq
C\| u\|_{L^{\infty}(\mathbb{R}^{3})}\|u_{t}\|_{L^{2}(\mathbb{R}^{3})}
(\|n\|_{L^{2}(\mathbb{R}^{3})}+\|j\|_{L^{2}(\mathbb{R}^{3})}).
\ea\ee
Substituting \eqref{3.60}, \eqref{3.61}, \eqref{3.69} and \eqref{3.71} into \eqref{3.59}, we obtain
\be\ba\label{4.44}
&\f{d}{dt}\|u_{t}\|^{2}_{L^{2}(\mathbb{R}^{3})}+\|\nabla u_{t}\|^{2}_{L^{2}(\mathbb{R}^{3})} \\
&\leq
\|\nabla u\|_{L^{\infty}(\mathbb{R}^{3})}\|u_{t}\|^{2}_{L^{2}(\mathbb{R}^{3})}+
C  \|n\|^{2}_{L^{3}(\mathbb{R}^{3})}
\|u_{t}\|^{2}_{L^{2}(\mathbb{R}^{3})} \\
&\quad
+ C\| u\|_{L^{\infty}(\mathbb{R}^{3})}\|u_{t}\|_{L^{2}(\mathbb{R}^{3})}
\|\langle v\rangle^{k} |\nabla_{x}f| \|_{L^{2}(\mathbb{R}^{3}\times\mathbb{R}_{v}^{3})} \\
&\quad
+ \| u\|_{L^{\infty}(\mathbb{R}^{3})}\|u_{t}\|_{L^{2}(\mathbb{R}^{3})}
(\|n\|_{L^{2}(\mathbb{R}^{3})}+\|j\|_{L^{2}(\mathbb{R}^{3})}).
 \ea\ee
 Making full use of moments propagation \eqref{3.2}, we infer that
\be\ba\label{3.37}
& \|n\|_{L^{\infty}(0,T;L_{x}^{2})} \leq C \|f_{0}\|_{L^{\infty}_{ x,v}}^{\f{ 1}{2}}\B[ \|f_{0} |v|^{3 } \| _{L^{1}_{x,v} }+ C \|f_{0} \|_{L^{\infty}_{x,v}} \B(\int_{0}^{T}\|u\|_{L_{x}^{6 }}ds\B)^{6}\B]^{\f{1}{2}},
\\
& \|n\|_{L^{\infty}(0,T;L_{x}^{3})} \leq C \|f_{0}\|_{L^{\infty}_{ x,v}}^{\f{ 8}{9}} \B[ \|f_{0} |v|^{6 } \| _{L^{1}_{x,v} }+ C \|f_{0} \|_{L^{\infty}_{x,v}} \B(\int_{0}^{T}\|u\|_{L_{x}^{9 }}ds\B)^{9 }\B]^{\f{1}{9} },\\
& \|j\|_{L^{\infty}(0,T;L_{x}^{2})} \leq C \|f_{0}\|_{L^{\infty}_{ x,v}}^{\f{ 1}{2}}\B[ \|f_{0} |v|^{5 } \| _{L^{1}_{x,v} }+ C \|f_{0} \|_{L^{\infty}_{x,v}} \B(\int_{0}^{T}\|u\|_{L_{x}^{8 }}ds\B)^{8}\B]^{\f{1}{2}},
\ea\ee
which implies that $n, j \in L_{t}^{\infty}(0,T;L_{x}^{2})$.
Moreover, Lemma \ref{serrin} and Lemma \ref{lemma4.2} imply that
\[
u\in L^{2}(0,T;L^{\infty}(\mathbb{R}^{3})), \quad
\nabla u\in L^{1}(0,T;L^{\infty} (\mathbb{R}^{3})).
\]
As a consequence, Lemma \ref{lemma3.3} further ensures that $\|\langle v\rangle^{k} |\nabla_{x}f| \|_{L^{2}(\mathbb{R}^{3}\times\mathbb{R}_{v}^{3})} \in L_{t}^{\infty}(0,T)$.
Finally, we are in a position to apply Gr\"onwall's inequality \eqref{gronwall3} in Lemma \ref{gronwall} to \eqref{4.44}, which yields
\be\label{3.74}
u_{t}\in L^{\infty}(0,T;L^{2}(\mathbb{R}^{3})),
\quad \nabla u_{t}\in L^{2} (0,T;L^{2}(\mathbb{R}^{3})).
\ee
To apply Gr\"onwall's inequality, we have to prove $u_{t}(0,x)\in L^{2}(\mathbb{R}^{3})$. To this end, we reformulate the velocity equations as
\be\label{4.45v1}\partial_t u(0,x)= -u_{0} \cdot \nabla u_{0} -  \Delta u_{0} +\partial_{i}\mathcal{R}_{i}\mathcal{R}_{j}(u_{i}(0,x)u_{j}(0,x)) -\int_{\mathbb{R}_{v}^{3}} f_{0}dv u_{0}+\int_{\mathbb{R}_{v}^{3}} f_{0} vdv.
\ee
 It is enough to show that the right hand side of the above equation is in $L^{2}(\mathbb{R}^{3})$.
 Owing to H\"older's inequality and the Gagliardo-Nirenberg inequality,
\be\label{4.46v1}
 \|u_{0} \cdot \nabla u_{0}\|_{L^{2}(\mathbb{R}^{3})}\leq
\|u_{0}\|_{L^{6}(\mathbb{R}^{3})} \|u_{0} \cdot \nabla u_{0}\|_{L^{3}(\mathbb{R}^{3})}\leq C
\|u_{0}\|_{L^{6}(\mathbb{R}^{3})} \|u_{0}\|_{L^{6}(\mathbb{R}^{3})} \| u_{0}\|_{H^{2}(\mathbb{R}^{3})}^{\f12}\leq C\| u_{0}\|_{H^{2}(\mathbb{R}^{3})}^{2}.
 \ee
 It is clear that
 \be\label{4.47v1}
 \|\Delta u_{0}  \|_{L^{2}(\mathbb{R}^{3})}\leq C\| u_{0}\|_{H^{2}(\mathbb{R}^{3})}.
  \ee
 Thanks to the  Calder\'on-Zygmund Theorem, we conclude by the same manner as   that
  \be\label{4.48v1}
   \|\partial_{i}\mathcal{R}_{i}\mathcal{R}_{j}(u_{i}(0,x)u_{j}(0,x))  \|_{L^{2}(\mathbb{R}^{3})} =  \|\mathcal{R}_{i}\mathcal{R}_{j}(u_{i}(0,x)\partial_{i}u_{j}(0,x))  \|_{L^{2}(\mathbb{R}^{3})}\leq C\| u_{0}\|_{H^{2}(\mathbb{R}^{3})}^{2}.\ee
Making use of the Gagliardo-Nirenberg inequality once again, we get \be\label{4.49v1}
   \B\|
  \int_{\mathbb{R}_{v}^{3}} f_{0}dv u_{0} \B\|_{L^{2}(\mathbb{R}^{3})}\leq \|u_{0}\|_{L^{\infty}(\mathbb{R}^{3})}  \B\|
  \int_{\mathbb{R}_{v}^{3}} f_{0}dv \B\|_{L^{2}(\mathbb{R}^{3})}.
 \ee
We deduce from Gagliardo-Nirenberg inequality that
 \be\label{4.50v1}
 \|u_{0}\|_{L^{\infty}(\mathbb{R}^{3})} \leq C\|u_{0}\|_{L^{6}(\mathbb{R}^{3})} ^{\f12} \|u_{0}\|_{H^{2}(\mathbb{R}^{3})}^{\f12}\leq C\|u_{0}\|_{H^{2}(\mathbb{R}^{3})}.
 \ee
 The   kinetic interpolation inequality enables us to write
 \be\label{4.51v1}
  \B\|
  \int_{\mathbb{R}_{v}^{3}} f_{0}dv \B\|_{L^{2}(\mathbb{R}^{3})}\leq  C \|f_{0}\|_{L^{\infty}(\mathbb{R}^{3}\times\mathbb{R}^{3}_{v})}
 ^{\f12}\B\|
  \int_{\mathbb{R}_{v}^{3}} f_{0}|v|^{3}dv \B\|_{L^{2}(\mathbb{R}^{3})} ^{\f12}\ee
  and
  \be\label{4.52v1}
  \B\|
  \int_{\mathbb{R}_{v}^{3}} f_{0}vdv \B\|_{L^{2}(\mathbb{R}^{3})}\leq  C \|f_{0}\|_{L^{\infty}(\mathbb{R}^{3}\times\mathbb{R}^{3}_{v})}
 ^{\f12}\B\|
  \int_{\mathbb{R}_{v}^{3}} f_{0}|v|^{5}dv \B\|_{L^{2}(\mathbb{R}^{3})} ^{\f12}\ee
  Inserting \eqref{4.50v1}
and \eqref{4.51v1} into   \eqref{4.49v1}, we see that $
  \int_{\mathbb{R}_{v}^{3}} f_{0}dv u_{0} \in L^{2}(\mathbb{R}^{3}).$
  At this stage, we have proved that$u_{t}(0,x)\in L^{2}(\mathbb{R}^{3})$ by \eqref{4.45v1}.
This completes the proof of the lemma.
\end{proof}
To achieve the higher regularity of the  particle distribution function, we need the following  following estimate of the velocity (see Lemma \ref{lemma3.4}).
\begin{lemma}\label{lemma4.4}
Assume the conditions of Lemma \ref{lemma4.3} hold.
Suppose in addition that
\[
f_{0}v^{6 }, f_{0}v^{13 }, f_{0}v^{33 }\in L^{1}(\mathbb{R}^{3}\times\mathbb{R}^{3}_{v}).
\]
Then there hold
$\nabla^{2} u\in L^{\infty}(0,T; L^{2}(\mathbb{R}^{3})), u\in L^{\infty}(0,T; L^{\infty}(\mathbb{R}^{3})), \nabla^{2}u\in L^{2}(0,T; L^{4}(\mathbb{R}^{3}))$.
\end{lemma}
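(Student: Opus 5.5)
The plan is to treat the velocity equation at each fixed time as a stationary Stokes system with the time derivative moved to the right-hand side:
\[
-\Delta u+\nabla \Pi=-u_t-u\cdot\nabla u-nu+j=:G .
\]
Lemma \ref{lemma4.3} gives $u_t\in L^\infty(0,T;L^2)\cap L^2(0,T;H^1)$. Lemmas \ref{serrin}--\ref{lemma4.2} give $\nabla u\in L^\infty(0,T;L^2)$, $\nabla^2u\in L^2(0,T;L^2)$ and $u\in L^2(0,T;L^\infty)$. Lemma \ref{lemma4.3} also gives $u\in L^\infty(0,T;L^\infty)$, so the middle assertion is already available. It remains to bound $G$ in $L^\infty_tL^2_x$ and in $L^2_tL^4_x$, and then apply the elliptic $L^r$ estimate $\|\nabla^2 u\|_{L^r}\le C\|G\|_{L^r}$ for the stationary Stokes system ($1<r<\infty$, Calder\'on--Zygmund).

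\textbf{Step 1: $\nabla^2u\in L^\infty(0,T;L^2)$.}
\begin{itemize}
\item $u_t\in L^\infty_tL^2_x$ directly from Lemma \ref{lemma4.3}.
\item $\|u\cdot\nabla u\|_{L^2}\le\|u\|_{L^\infty}\|\nabla u\|_{L^2}$, which is bounded in time.
\item $\|nu\|_{L^2}\le \|u\|_{L^\infty}\|n\|_{L^2}$.
\item $\|j\|_{L^2}$ is controlled by \eqref{3.37}.
\end{itemize}
The moments used here are $n,j\in L^\infty_tL^2_x$, obtained through \eqref{3.2} with $u\in L^\infty_tL^\infty_x$. This gives $G\in L^\infty_tL^2_x$ and hence $\nabla^2u\in L^\infty(0,T;L^2)$.

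\textbf{Step 2: $\nabla^2u\in L^2(0,T;L^4)$.}
\begin{itemize}
\item By Gagliardo--Nirenberg, $\|u_t\|_{L^4}\le C\|u_t\|_{L^2}^{1/4}\|\nabla u_t\|_{L^2}^{3/4}$, which lies in $L^2_t$.
\item $\|u\cdot\nabla u\|_{L^4}\le \|u\|_{L^\infty}\|\nabla u\|_{L^4}$. Since $\nabla u\in L^\infty_tH^1_x$ after Step 1, we have $\|\nabla u\|_{L^4}\le C\|\nabla u\|_{H^1}$, bounded in time.
\item $\|nu\|_{L^4}\le\|u\|_{L^\infty}\|n\|_{L^4}$.
\item $\|j\|_{L^4}$ is handled as below.
\end{itemize}
The required moment bounds $n,j\in L^\infty(0,T;L^4)$ come from \eqref{3.2} with $p=4$, $d=3$. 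For $n$ ($k=0$) this needs $f_0|v|^{9}\in L^1$ and $u\in L^1_tL^{12}_x$. For $j$ ($k=1$) it needs $f_0|v|^{13}\in L^1$ and $u\in L^1_tL^{16}_x$. Both velocity requirements follow from $u\in L^\infty_t(L^2\cap L^\infty)$. I expect the listed hypotheses $f_0v^{13}$, $f_0v^{33}$ to correspond to such choices, possibly with a larger exponent (e.g.\ $p=9$) used to get $n\in L^\infty_t L^{9}$ for a H\"older split. Putting the four pieces together gives $G\in L^2_tL^4_x$, and the Stokes estimate yields $\nabla^2u\in L^2(0,T;L^4)$.

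The main obstacle is justifying the stationary Stokes reformulation and its $L^r$ estimate on the whole space. The pressure must be recovered as $\Pi=(-\Delta)^{-1}\Div G$, and one must check that $\nabla^2u$ is genuinely controlled by $G$ rather than only modulo harmonic polynomials. I would handle this by applying the Leray projector, so that $-\Delta u=\mathbb{P}G$, and then invoking Calder\'on--Zygmund boundedness of $\nabla^2(-\Delta)^{-1}\mathbb{P}$ on $L^2$ and $L^4$, using $u\in L^2$ to exclude polynomial ambiguity. A secondary point is matching the moment exponents to the stated hypotheses on $f_0$, which is bookkeeping with \eqref{3.2}.
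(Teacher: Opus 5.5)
Your overall route (stationary Stokes reformulation, $L^2$ and $L^4$ Stokes estimates, Gagliardo--Nirenberg for $u_t$, moment bounds from \eqref{3.2}) matches the paper's. However, Step~1 is circular. You take $u\in L^\infty(0,T;L^\infty)$ from Lemma~\ref{lemma4.3}. Although \eqref{4.23} lists this bound, the proof of Lemma~\ref{lemma4.3} only establishes \eqref{3.74}, namely $u_t\in L^\infty_tL^2_x$ and $\nabla u_t\in L^2_tL^2_x$. The bound $u\in L^\infty_tL^\infty_x$ is first obtained in the present lemma, as \eqref{v4.62}, and there it is a consequence of $\nabla^2u\in L^\infty_tL^2_x$. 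It cannot come from $u\in L^\infty_tH^1_x$ alone in 3D. If you only have $u\in L^2_tL^\infty_x$ from Lemma~\ref{serrin}, your bounds $\|u\|_{L^\infty}\|\nabla u\|_{L^2}$ and $\|u\|_{L^\infty}\|n\|_{L^2}$ place $G$ merely in $L^2_tL^2_x$. That returns only the already known $\nabla^2u\in L^2_tL^2_x$, and the middle assertion of the lemma is assumed rather than proved.

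The missing idea, which is what the paper does, is to avoid $\|u\|_{L^\infty}$ in Step~1 by using Sobolev $L^6$ bounds and absorption:
\[
\|u\cdot\nabla u\|_{L^2}^2\le \|u\|_{L^6}^2\|\nabla u\|_{L^2}\|\nabla u\|_{L^6}\le C\|\nabla u\|_{L^2}^3\|\nabla^2u\|_{L^2}\le C\|\nabla u\|_{L^2}^6+\frac12\|\nabla^2u\|_{L^2}^2 .
\]
The absorption is legitimate for a.e.\ $t$, since $\nabla^2u\in L^2_tL^2_x$.

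For the coupling term, use $\|nu\|_{L^2}\le\|u\|_{L^6}\|n\|_{L^3}\le C\|\nabla u\|_{L^2}\|n\|_{L^3}$, with $n\in L^\infty_tL^3_x$ from \eqref{3.2}. This needs $f_0|v|^6\in L^1$ and $u\in L^1_tL^9_x$; that is the role of the $f_0v^6$ hypothesis, which your proposal never uses.

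Together with $u_t,j\in L^\infty_tL^2_x$, this gives $\nabla^2u\in L^\infty_tL^2_x$. Then $\|u\|_{L^\infty}\le C\|\nabla u\|_{L^2}^{1/2}\|\nabla^2u\|_{L^2}^{1/2}$ yields $u\in L^\infty_tL^\infty_x$.

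With Step~1 repaired, your Step~2 goes through as written. Your split $\|nu\|_{L^4}\le\|u\|_{L^\infty}\|n\|_{L^4}$ needs $f_0|v|^9\in L^1$, which follows from $|v|^9\le|v|^6+|v|^{13}$. It is a valid alternative to the paper's split $\|u\|_{L^6}\|n\|_{L^{12}}$, which is where the hypothesis $f_0v^{33}\in L^1$ comes from.
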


\begin{proof}
We rewrite the velocity equation as
\be\label{etticeq}
- \Delta u +\nabla \Pi =\partial_t u -u \cdot \nabla u -\int_{\mathbb{R}^{3}} f(u-v)dv
\ee
By applying the Stokes estimate, we obtain
\be\label{3.75}
\|\nabla^{2} u \|^{2}_{L^{2}(\mathbb{R}^{3})}+\|\nabla \Pi\|^{2}_{L^{2}(\mathbb{R}^{3})}\leq
 C\|\partial_t u\|^{2}_{L^{2}(\mathbb{R}^{3})}+ C\|u \cdot \nabla u\|^{2}_{L^{2}(\mathbb{R}^{3})}+C\|u n\|^{2}_{L^{2}(\mathbb{R}^{3})}+C\|j\|^{2}_{L^{2}(\mathbb{R}^{3})}.
\ee
Using H\"older's inequality and Young's inequality, we estimate
\be\ba\label{3.76}
\|u \cdot \nabla u\|^{2}_{L^{2}(\mathbb{R}^{3})}
&\leq
C\|u \|^{2}_{L^{6}(\mathbb{R}^{3})}\| \nabla u\|_{L^{2}(\mathbb{R}^{3})}\| \nabla u\|_{L^{6}(\mathbb{R}^{3})}\\
&\leq
C \|\nabla u\|_{L^{2}(\mathbb{R}^{3})}^{3} \| \nabla ^{2} u\|_{L^{2}(\mathbb{R}^{3})}\\
&\leq
C \|\nabla u\|_{L^{2}(\mathbb{R}^{3})}^{6} + \f12\| \nabla ^{2} u\|_{L^{2}(\mathbb{R}^{3})}^{2}.
\ea\ee
By H\"older's inequality and the Sobolev inequality, we find
\be\ba\label{3.77}
\|u n\|^{2}_{L^{2}(\mathbb{R}^{3})}
\leq \|u \|^{2}_{L^{6}(\mathbb{R}^{3})} \| n\|^{2}_{L^{3}(\mathbb{R}^{3})}
\leq C\|\nabla u \|^{2}_{L^{2}(\mathbb{R}^{3})} \| n\|^{2}_{L^{3}(\mathbb{R}^{3})}.
\ea\ee
Substituting \eqref{3.76} and \eqref{3.77} into \eqref{3.75}, we get
\be
\ba
&\f12\|\nabla^{2} u \|^{2}_{L^{2}(\mathbb{R}^{3})}+\|\nabla \Pi\|^{2}_{L^{2}(\mathbb{R}^{3})}\\
&\leq
 \|\partial_t u\|^{2}_{L^{2}(\mathbb{R}^{3})}+ C \|\nabla u\|_{L^{2}(\mathbb{R}^{3})}^{6} +C\|\nabla u \|^{2}_{L^{2}(\mathbb{R}^{3})} \| n\|^{2}_{L^{3}(\mathbb{R}^{3})}+C\|j\|^{2}_{L^{2}(\mathbb{R}^{3})}.
\ea \ee
From \eqref{3.2}, we have
\be\ba\label{4.45} \|n \|_{L^{\infty}(0,T;L_{x}^{3})} \leq C \|f_{0}\|_{L^{\infty}_{ x,v}}^{\f{ 2}{3}}\B[ \|f_{0} |v|^{6 } \| _{L^{1}_{x,v} }+ C \|f_{0} \|_{L^{\infty}_{x,v}} \B(\int_{0}^{T}\|u\|_{L_{x}^{9 }}ds\B)^{9}\B]^{\f{1}{3} },
\ea\ee
which means $n\in L_{t}^{\infty}(0,T;L_{x}^{3})$.
In addition, \eqref{3.37} implies $j\in L_{t}^{\infty}(0,T;L_{x}^{2})$.
There holds $ u_{t}\in L^{\infty}(0,T;L^{2}(\mathbb{R}^{3}))$ in \eqref{3.74}.
Therefore,
\be\label{3.80}
\nabla^{2} u, \nabla \Pi\in L^{\infty}(0,T; L^{2}(\mathbb{R}^{3})).
\ee
By interpolation and the Sobolev embedding,
\be\ba
\|u\|_{L^{\infty}(\mathbb{R}^{3})} \leq C \|u\|_{L^{6}(\mathbb{R}^{3})} \|\nabla u\|_{L^{6}(\mathbb{R}^{3})}\leq C \|\nabla u\|_{L^{2}(\mathbb{R}^{3})} \|\nabla^{2} u\|_{L^{2}(\mathbb{R}^{3})},
\ea\ee
and thus, from \eqref{3.80}, we obtain
\be\label{v4.62}
u\in L^{\infty}(0,T; L^{\infty}(\mathbb{R}^{3})).\ee

From \eqref{etticeq}, we note that
$$
\|\nabla^{2} u \|^{2}_{L^{4}(\mathbb{R}^{3})} \leq C
 \|\partial_t u\|^{2}_{L^{4}(\mathbb{R}^{3})} + C \|u \cdot \nabla u\|^{2}_{L^{4}(\mathbb{R}^{3})} +C \|u n\|^{2}_{L^{4}(\mathbb{R}^{3})} + C \|j\|^{2}_{L^{ 4}(\mathbb{R}^{3})}.
$$
Integrating in time yields
\be\ba\label{3.84}
\int_{0}^{T}\|\nabla^{2} u \|^{2}_{L^{4}(\mathbb{R}^{3})}dt
&\leq
C \int_{0}^{T}\|\partial_t u\|^{2}_{L^{4}(\mathbb{R}^{3})}dt
+ C \int_{0}^{T}\|u \cdot \nabla u\|^{2}_{L^{4}(\mathbb{R}^{3})}dt \\
&\quad
+C \int_{0}^{T}\|u n\|^{2}_{L^{4}(\mathbb{R}^{3})}dt
+ C\int_{0}^{T}\|j\|^{2}_{L^{ 4}(\mathbb{R}^{3})}dt.
\ea \ee
From the Gagliardo-Nirenberg inequality
\be\label{3.86}
\|u_{t}\|_{L^{4}(\mathbb{R}^{3})}\leq C \|u_{t}\|^{\f14}_{L^{2}(\mathbb{R}^{3})} \|\nabla u_{t}\|^{\f34}_{L^{2}(\mathbb{R}^{3})},
 \ee
which together with \eqref{3.74} implies
\be \ba \label{3.87}
\int_{0}^{T}\|\partial_t u\|^{2}_{L^{4}(\mathbb{R}^{3})}dt
&\leq
C \int_{0}^{T} \|u_{t}\|^{\f12}_{L^{2}(\mathbb{R}^{3})} \|\nabla u_{t}\|^{\f32}_{L^{2}(\mathbb{R}^{3})}dt\\
&\leq
C \|u_{t}\|^{\f12}_{L^{\infty}(0,T; L^{2}(\mathbb{R}^{3}))} \int_{0}^{T} \|\nabla u_{t}\|^{\f32}_{L^{2}(\mathbb{R}^{3})}dt.
\ea\ee
In the same manner as \eqref{3.86}, we know
$$\ba
\| \nabla u\|_{L^{4}(\mathbb{R}^{3})}\leq C\| \nabla u\|^{\f14}_{L^{2}(\mathbb{R}^{3})} \| \nabla ^{2} u\|^{\f34}_{L^{2}(\mathbb{R}^{3})},
 \ea$$
and therefore
\be\ba \label{3.88}
\int_{0}^{T}\|u \cdot \nabla u\|^{2}_{L^{4}(\mathbb{R}^{3})} dt
&\leq
\int_{0}^{T}\|u\|^{2}_{L^{\infty}(\mathbb{R}^{3})}\| \nabla u\|^{2}_{L^{4}(\mathbb{R}^{3})}dt \\
&\leq
C\|u\|^{2}_{L^{\infty}(0,T; L^{\infty}(\mathbb{R}^{3}))}\int_{0}^{T}\| \nabla u\|^{\f12}_{L^{2}(\mathbb{R}^{3})} \| \nabla u^{2}\|^{\f32}_{L^{2}(\mathbb{R}^{3})}dt \\
&\leq
C\|u\|^{2}_{L^{\infty}(0,T; L^{\infty}(\mathbb{R}^{3}))}\| \nabla u\|^{\f12}_{L^{\infty}(0,T; L^{2}(\mathbb{R}^{3}))}\int_{0}^{T} \| \nabla ^{2} u\|^{\f32}_{L^{2}(\mathbb{R}^{3})}dt.
 \ea\ee
H\"older's inequality and Sobolev embedding yield
\be\ba\label{3.89}
\int_{0}^{T}\|u n\|^{2}_{L^{4}(\mathbb{R}^{3})}dt
&\leq
C \int_{0}^{T}\|u \|^{2}_{L^{6}(\mathbb{R}^{3})}\| n\|^{2}_{L^{12}(\mathbb{R}^{3})}dt\\
&\leq
C \int_{0}^{T}\|\nabla u \|^{2}_{L^{2}(\mathbb{R}^{3})}\| n\|^{2}_{L^{12}(\mathbb{R}^{3})}dt\\
&\leq
C \|\nabla u \|^{2}_{L^{\infty}(0,T;L^{2}(\mathbb{R}^{3}))}
\int_{0}^{T}\| n\|^{2}_{L^{12}(\mathbb{R}^{3})}dt.
\ea\ee
Inserting \eqref{3.87}, \eqref{3.88} and \eqref{3.89} into \eqref{3.84}, we obtain
\be\ba\label{3.40}
\int_{0}^{T}\|\nabla^{2} u \|^{2}_{L^{4}(\mathbb{R}^{3})}dt
&\leq C\|u_{t}\|^{\f12}_{L^{\infty}(0,T; L^{2}(\mathbb{R}^{3}))} \int_{0}^{T} \|\nabla u_{t}\|^{\f32}_{L^{2}(\mathbb{R}^{3})}dt\\
&\quad
+ C\|u\|^{2}_{L^{\infty}(0,T; L^{\infty}(\mathbb{R}^{3}))}\| \nabla u\|^{\f12}_{L^{\infty}(0,T; L^{2}(\mathbb{R}^{3}))}\int_{0}^{T} \| \nabla ^{2} u\|^{\f32}_{L^{2}(\mathbb{R}^{3})}dt \\
&\quad
+ C \|\nabla u \|^{2}_{L^{\infty}(0,T;L^{2}(\mathbb{R}^{3}))} \int_{0}^{T} \| n\|^{2}_{L^{12}(\mathbb{R}^{3})}ds+C\int_{0}^{T}\|j\|^{2}_{L^{ 4}(\mathbb{R}^{3})}dt.
\ea\ee
It remains  to show that
\[
n\in L_{t}^{2}(0,T;L_{x}^{12}(\mathbb{R}^{3})), \quad
j\in L_{t}^{2}(0,T;L_{x}^{4}(\mathbb{R}^{3})).
\]
From \eqref{3.2}, we conclude by \eqref{v4.62} and $u\in L^{\infty}(0,T; L^{2}(\mathbb{R}^{3}))$ that
$$\ba
& \|n\|_{L^{\infty}(0,T;L_{x}^{12})} \leq C \|f_{0}\|_{L^{\infty}_{ x,v}}^{\f{ 11}{12}}\B[ \|f_{0} |v|^{33 } \| _{L^{1}_{x,v} }+ C \|f_{0} \|_{L^{\infty}_{x,v}} \B(\int_{0}^{T}\|u\|_{L_{x}^{36 }}ds\B)^{36}\B]^{\f{1}{12} },\\
& \|j \|_{L^{\infty}(0,T;L_{x}^{4})} \leq C \|f_{0}\|_{L^{\infty}_{ x,v}}^{\f{ 3}{4}}\B[ \|f_{0} |v|^{13 } \| _{L^{1}_{x,v} }+ C \|f_{0} \|_{L^{\infty}_{x,v}} \B(\int_{0}^{T}\|u\|_{L_{x}^{16 }}ds\B)^{16}\B]^{\f{1}{4} },
\ea$$
which yields $n\in L_{t}^{\infty}(0,T;L_{x}^{12}(\mathbb{R}^{3}))$ and $L_{t}^{\infty}(0,T;L_{x}^{4}(\mathbb{R}^{3}))$.
This completes the proof of this lemma.
\end{proof}

\begin{proof}[Proof of Theorem \ref{the1.1} concerning the velocity and its gradient]\ \\
By Lemma \ref{lemma4.4}, we can apply Lemma \ref{lemma3.4} to obtain $f\in L^{\infty}([0,T]; H^{2}(\mathbb{R}^{3}))$.
In summary, from Lemmas \ref{serrin}-\ref{lemma4.4}, we have proved that $f\in L^{\infty} [0,T]; H^{2}(\mathbb{R}^{3}))$
$u\in L^{\infty} ([0,T]; H^{2}(\mathbb{R}^{3})) $, $u_{t}\in L^{\infty} ([0,T];L^{2}(\mathbb{R}^{3}))\cap L^{2}([0,T];H^{1}(\mathbb{R}^{3}))$,
$\nabla \Pi\in L^{\infty} ([0,T];L^{2}(\mathbb{R}^{3})) $.
To conclude that $(u, f)$ is a strong solution in the sense of Definition \ref{ss}, it remains to prove $\nabla^{2} \partial_{i}u,\partial_{i}\Pi\in L^{2}(0,T;L^{2}(\mathbb{R}^{3}))$.
To this end, we differentiate equation \eqref{etticeq} with respect to $x_i$ to get
\eqref{etticeq} that
\be
- \Delta \partial_{i}u +\nabla \partial_{i}\Pi =\partial_t \partial_{i}u -\partial_{i}u \cdot \nabla u-\partial_{i}u \cdot \nabla \partial_{i}u -\int_{\mathbb{R}_{v}^{3}} \partial_{i}f(u-v)dv-\int_{\mathbb{R}_{v}^{3}} f\partial_{i}u dv.
\ee
Applying the Stokes estimate to this equation gives
\be\label{4.57}\ba
&\|\nabla^{2} \partial_{i}u \|_{L^{2}(\mathbb{R}^{3})}+\|\nabla \partial_{i}\Pi\|_{L^{2}(\mathbb{R}^{3})} \\
&\leq
 C\|\partial_t \partial_{i}u\|_{L^{2}(\mathbb{R}^{3})}
+ C\|\partial_{i}u \cdot \nabla u\|_{L^{2}(\mathbb{R}^{3})}
+ C\|u \cdot \nabla \partial_{i}u\|_{L^{2}(\mathbb{R}^{3})} \\
&\quad
+C\B\|\int_{\mathbb{R}^{3}_{v}} \partial_{i}f u dv\B\|_{L^{2}(\mathbb{R}^{3})}
+C\B\|\int_{\mathbb{R}^{3}_{v}} \partial_{i}f v dv\B\|_{L^{2}(\mathbb{R}^{3})}
+C\B\|\int_{\mathbb{R}^{3}_{v}} f\partial_{i}u dv\B\|_{L^{2}(\mathbb{R}^{3})}.
\ea \ee
Integrating in time, we find
\be\label{4.58}\ba
&\|\nabla^{2} \partial_{i}u \| _{L^{2}(0,T;L^{2}(\mathbb{R}^{3}))}+\|\nabla \partial_{i}\Pi\|_{L^{2}(0,T;L^{2}(\mathbb{R}^{3}))} \\
&\leq
 C\|\partial_t \partial_{i}u\|_{L^{2}(0,T;L^{2}(\mathbb{R}^{3}))}+ C\|\partial_{i}u \cdot \nabla u\|_{L^{2}(0,T;L^{2}(\mathbb{R}_{v}^{3}))}+ C\|u \cdot \nabla \partial_{i}u\|_{L^{2}(0,T;L^{2}(\mathbb{R}_{v}^{3}))} \\
&\quad
+C\B\|\int_{\mathbb{R}_{v}^{3}} \partial_{i}f u dv\B\|_{L^{2}(0,T;L^{2}(\mathbb{R}_{v}^{3}))}+C\B\|\int_{\mathbb{R}_{v}^{3}} \partial_{i}f v dv\B\|_{L^{2}(0,T;L^{2}(\mathbb{R}^{3}))} \\
&\quad
+C\B\|\int_{\mathbb{R}^{3}} f\partial_{i}u dv\B\|_{L^{2}(0,T;L^{2}(\mathbb{R}_{v}^{3}))}.
\ea \ee
From \eqref{4.23}, we have $\nabla u_{t}\in L^{2}(0,T; L^{2}(\mathbb{R}^{3}))$.
In addition, we derive from $\nabla^{2} u\in L^{\infty}(0,T; L^{2}(\mathbb{R}^{3}))$ in Lemma \ref{lemma4.4} that $\nabla u\in L^{\infty}(0,T; L^{6}(\mathbb{R}^{3}))$. This together with $\nabla u\in L^{\infty}(0,T; L^{2}(\mathbb{R}^{3}))$ in Lemma \ref{serrin} implies that $\nabla u\in L^{\infty}(0,T; L^{4}(\mathbb{R}^{3}))$, which controls the second term on the right-hand of \eqref{4.58}.
Since $u\in L^{\infty}(0,T; L^{\infty}(\mathbb{R}^{3}))$ and $\nabla^{2} u\in L^{\infty}(0,T; L^{2}(\mathbb{R}^{3}))$ in Lemma \ref{lemma4.4}, we estimate the second term on the right-hand of \eqref{4.58} as
$$\ba
\|u \cdot \nabla \partial_{i}u\|_{L^{2}(0,T;L^{2}(\mathbb{R}^{3}))}\leq
\|u \|_{L^{\infty}(0,T;L^{\infty}(\mathbb{R}^{3}))}\| \nabla^{2} u\|_{L^{2}(0,T;L^{2}(\mathbb{R}^{3}))}.
\ea $$
Clearly,
$$\ba
\B\|\int_{\mathbb{R}^{3}_{v}} \partial_{i}f u dv\B\|_{L^{2}(0,T;L^{2}(\mathbb{R}^{3}))}\leq
\|u \|_{L^{\infty}(0,T;L^{\infty}(\mathbb{R}^{3}_{v}))}\B\|\int_{\mathbb{R}_{v}^{3}} \partial_{i}f dv\B\|_{L^{2}(0,T;L^{2}(\mathbb{R}_{v}^{3}))}.
\ea $$
As in the derivation of \eqref{3.65}, we have
\be\ba\label{4.61}
 \B\|\int_{\mathbb{R}^{3}_{v}} \partial_{i}f dv\B\|_{L^{2}(0,T;L^{2}(\mathbb{R}^{3}))}\leq C\B\| \int_{\mathbb{R}_{v}^{3}} \langle v\rangle|\nabla_{x}f| dv\B\|_{L^{\infty}(0,T;L^{2}(\mathbb{R}^{3}))}\leq C\|\langle v\rangle^{k}|\nabla_{x}f| \|_{L^{\infty}(0,T;L^{2}(\mathbb{R}^{3}\times\mathbb{R}_{v}^{3}))}.
\ea\ee
which turns out that
$$\ba\B\|\int_{\mathbb{R}^{3}_{v}} \partial_{i}f u dv\B\|_{L^{2}(0,T;L^{2}(\mathbb{R}^{3}))}\leq
\|u \|_{L^{\infty}(0,T;L^{\infty}(\mathbb{R}^{3}))}\|\langle v\rangle^{k}|\nabla_{x}f| \|_{L^{\infty}(0,T;L^{2}(\mathbb{R}^{3}\times\mathbb{R}_{v}^{3}))}.
\ea $$
Along the line of \eqref{4.61}, we have
$$\ba\B\|\int_{\mathbb{R}^{3}_{v}} \partial_{i}f v dv\B\|_{L^{2}(0,T;L^{2}(\mathbb{R}^{3}_{v}))} \leq C\|\langle v\rangle^{k}|\nabla_{x}f| \|_{L^{\infty}(0,T;L^{2}(\mathbb{R}^{3}\times\mathbb{R}_{v}^{3}))}.
\ea $$
Finally, using $\nabla u\in L^{\infty}(0,T; L^{6}(\mathbb{R}^{3}))$
and \eqref{4.45}, we obtain
$$\ba
\B\|\int_{\mathbb{R}^{3}_{v}} f\partial_{i}u dv\B\|_{L^{2}(0,T;L^{2}(\mathbb{R}^{3}))}\leq C \|n\|_{L^{\infty}(0,T;L^{3}(\mathbb{R}^{3}))} \|\nabla u\|_{L^{2}(0,T;L^{6}(\mathbb{R}^{3}))}.
\ea$$
Combining all these estimates, we conclude that $\nabla^{2} \partial_{i}u,\partial_{i}\Pi\in L^{2}(0,T;L^{2}(\mathbb{R}^{3}))$, which completes the proof of the theorem.
\end{proof}

\subsection{Ladyzhenskaya--Prodi--Serrin regularity
criteria based on the pressure}

In this subsection, we will show that the regularity of the pressure implies that the velocity lies in the Ladyzhenskaya--Prodi--Serrin class, as established in the previous subsection.

\begin{proof}[Proof of Theorem \ref{the1.1} involving the pressure and its gradient]\ \\
Multiplying the Navier--Stokes equations \eqref{NSV} by $u|u|$, integrating by parts, and using the divergence-free condition, we obtain
\be\ba\label{6.23}
&\f{1}{3}\f{d}{dt}\int_{\mathbb{R}^{3}}|u|^{3}dx+\int_{\mathbb{R}^{3}}|\nabla u|^{2}|u|^{2}dx
+\f{1}{9}\int_{\mathbb{R}^{3}}|\nabla| u|^{\f32}|^{2}dx\\
&=
-\int_{\mathbb{R}^{3}} u\cdot\nabla \Pi |u| dx-\int_{\mathbb{R}^{3}}nuu|u| dx+\int_{\mathbb{R}^{3}}ju|u| dx\\
&=: I+II+II.
\ea\ee
In what follows, based on \eqref{serrin1}, it suffices to bound $I$ under the hypothesis of Theorem \ref{the1.1}.

(1)
Integration by parts, using the incompressible condition, and H\"older's inequality, we obtain
\be\ba\label{6.24}
|I|
&=
\B|-\int_{\mathbb{R}^{3}} \Pi u \cdot \nabla|u| dx \B|\\
&\leq
C\int_{\mathbb{R}^{3}} |\Pi| |u|^{\f12} \nabla|u|^{\f32} dx
\\
&\leq
C \| \Pi\|_{L^{q}}\||u|^{\f12}\|_{L^{\f{2q}{q-2}}(\mathbb{R}^{3})}
\|\nabla|u|^{\f32}\|_{L^{2}(\mathbb{R}^{3})}
\\
&\leq
C \| \Pi\|_{L^{q}(\mathbb{R}^{3})}\||u|\|^{\f12}_{L^{\f{ q}{q-2}}(\mathbb{R}^{3})}\|\nabla|u|^{\f32}\|_{L^{2}(\mathbb{R}^{3})}.
\ea\ee
For $\f{9}{4} \leq q \leq 3$, we interpolate in Lebesgue spaces to get
$$\ba
\| u \|_{L^{\f{ q}{q-2}}(\mathbb{R}^{3})}\leq\| u \|_{L^{3}(\mathbb{R}^{3})}^{\f{4q-9}{q}}\| u \|_{L^{9}(\mathbb{R}^{3})}^{\f{ 9-3q}{q} }.
\ea$$
Using Sobolev embedding, we have
$$\ba
\| u \|_{L^{9}(\mathbb{R}^{3})}=\||u|^{\f32} \|_{L^{6}(\mathbb{R}^{3})}^{\f23} \leq C\|\nabla|u|^{\f32} \|_{L^{2}(\mathbb{R}^{3})}^{\f23},
\ea$$
and hence
\be\ba\label{6.26}
\| u \|_{L^{\f{ q}{q-2}}(\mathbb{R}^{3})}\leq\| u \|_{L^{3}(\mathbb{R}^{3})}^{\f{4q-9}{q}}\| u \|_{L^{9}(\mathbb{R}^{3})}^{\f{ 9-3q}{q}}\leq C\| u \|_{L^{3} (\mathbb{R}^{3})}^{\f{4q-9}{q}}\|\nabla|u|^{\f32} \|_{L^{2}(\mathbb{R}^{3})}^{\f{ 2(3- q)}{q}}.
\ea\ee
Substituting this into \eqref{6.24} and using Young's inequality, we obtain
\be\ba\label{6.27}
|I|
&\leq
C \| \Pi\|_{L^{q}(\mathbb{R}^{3})} \| u \|_{L^{3}(\mathbb{R}^{3})}^{\f{4q-9}{2q}}\|\nabla|u|^{\f32} \|_{L^{2}(\mathbb{R}^{3})}^{\f{ (3- q)}{q}}\|\nabla|u|^{\f32}\|_{L^{2}(\mathbb{R}^{3})} \\
&\leq
C \| \Pi\|_{L^{q}(\mathbb{R}^{3})}^{\f{2q}{2q-3}} \| u \|^{\f{4q-9}{2q-3}} _{L^{3}(\mathbb{R}^{3})} +\f{1}{54}\|\nabla|u|^{\f32}\|_{L^{2}(\mathbb{R}^{3})}^{2}.
\ea\ee
By H\"older's inequality
\be\ba
\B|-\int_{\mathbb{R}^{3}}nuu|u| dx \B|
&\leq \|n\|_{L^{2}(\mathbb{R}^{3})}\|u\|^{3}_{L^{6}(\mathbb{R}^{3})}.
\ea\ee
By the Gagliardo-Nirenberg inequality, we obtain
$$\ba
\||u|^{\f32}\| _{L^{4}(\mathbb{R}^{3})}\leq C\||u|^{\f32}\|^{\f14} _{L^{2}(\mathbb{R}^{3})} \|\nabla|u|^{\f32}\|^{\f34} _{L^{2}(\mathbb{R}^{3})},
\ea$$
which gives
\be\ba\label{6.28}
\|u\| _{L^{6}(\mathbb{R}^{3})}=\||u|^{\f32}\|^{\f23} _{L^{4}(\mathbb{R}^{3})}\leq C\||u|^{\f32}\|^{\f16} _{L^{2}(\mathbb{R}^{3})} \|\nabla|u|^{\f32}\|^{\f12} _{L^{2}(\mathbb{R}^{3})}.
\ea\ee
Hence, we see that
\be\ba\label{6.29}
II
&\leq
\|n\|_{L^{2}(\mathbb{R}^{3})}\||u|^{\f32}\|^{\f12} _{L^{2}(\mathbb{R}^{3})} \|\nabla|u|^{\f32}\|^{\f32} _{L^{2}}\\
&\leq
\|n\|_{L^{2}(\mathbb{R}^{3})}^{4}\||u| \|^{3} _{L^{3}(\mathbb{R}^{3})} +\f{1}{54}\|\nabla|u|^{\f32}\|^{2} _{L^{2}(\mathbb{R}^{3})}
\ea\ee
by Young's inequality.

It remains to bound $III$.
Using the H\"older inequality and \eqref{6.28}, we have
$$\ba
\B|\int_{\mathbb{R}^{3}}ju|u| dx\B|\leq \|j\|_{L^{\f32}(\mathbb{R}^{3})} \|u\|^{2}_{L^{6}(\mathbb{R}^{3})}\leq C\|j\|_{L^{\f32}(\mathbb{R}^{3})} \||u|^{\f32}\|^{\f13} _{L^{2}(\mathbb{R}^{3})} \|\nabla|u|^{\f32}\| _{L^{2}(\mathbb{R}^{3})}.
\ea$$
By Young's inequality, we get
\be\ba\label{6.30}
|III| \leq C\|j\|^{2}_{L^{\f32}(\mathbb{R}^{3})} \||u| \| _{L^{3}(\mathbb{R}^{3})} +\f{1}{54}\|\nabla|u|^{\f32}\|^{2} _{L^{2}(\mathbb{R}^{3})}.
\ea\ee
Substituting \eqref{6.27}, \eqref{6.29}, and \eqref{6.30} into \eqref{6.23}, we obtain
\be\ba\label{6.31}
&\f{1}{3}\f{d}{dt}\int_{\mathbb{R}^{3}}|u|^{3}dx+\int_{\mathbb{R}^{3}}|\nabla u|^{2}|u|^{2}dx +\f{1}{18}\int_{\mathbb{R}^{3}}|\nabla| u|^{\f32}|^{2}dx\\
&\leq
C \| \Pi\|_{L^{q}(\mathbb{R}^{3})}^{\f{2q}{2q-3}}\| u \|^{\f{4q-9}{2q-3}} _{L^{3}(\mathbb{R}^{3})}+\|n\|_{L^{2}(\mathbb{R}^{3})}^{4}\| u  \|^{3} _{L^{3}(\mathbb{R}^{3})} +C\|j\|^{2}_{L^{\f32}(\mathbb{R}^{3})} \|u \|_{L^{3}(\mathbb{R}^{3})}.
\ea\ee
It is enough to show that $n \in L^{4}(0,T;L^{2}(\mathbb{R}^{3}))$ and $ j\in L^{2}(0,T;L^{\f32}(\mathbb{R}^{3}))$. Indeed, employing
\eqref{3.2}, we have
\be\ba\label{6.32}
\B\|\int_{\mathbb{R}^{3}}
f dv\B\|_{L^{\infty}(0,T;L_{x}^{2})} \leq C \|f_{0}\|_{L^{\infty}_{ x,v}}^{\f{ 1}{2}}\B[ \|f_{0} |v|^{3 } \| _{L^{1}_{x,v} }+ C \|f_{0} \|_{L^{\infty}_{x,v}} \B(\int_{0}^{T}\|u\|_{L_{x}^{6 }}ds\B)^{6}\B]^{\f{1}{2}},
\ea\ee
and
\be\ba\label{6.33}
\B\|\int_{\mathbb{R}^{d}}
f v dv\B\|_{L_{t}^{\infty}(0,T;L_{x}^{\f{3}{2}})} \leq C \|f\|_{L^{\infty}_{t,x,v}}^{\f{1}{3}}\B[ \int_{\mathbb{T}^{n}}f_{0}v^{3 } dvdx+ C \|f_{0} \|_{L^{\infty}_{x}L^{\infty}_{v}} \B(\int_{0}^{T}\|u\|_{L_{x}^{6 }}ds\B)^{6}\B]^{\f{2}{3} }.
\ea\ee
It can easily be check $\f{4q-9}{2q-3}\leq3$.
By Gr\"onwall inequality \eqref{gronwall3} in Lemma \ref{gronwall} to \eqref{6.31}, we get $|u|^{\f3 2}\in L^{2}(0,T; L^{6}(\mathbb{R}^{3}))$, and hence $u \in L^{3}(0,T; L^{9}(\mathbb{R}^{3}))$.
This completes the proof of the assertion based on the first part of the theorem.

(2) By H\"older's inequality
\be\ba \label{6.34}
\B|\int_{\mathbb{R}^{3}} u\cdot\nabla \Pi |u| dx\B|\leq C\|\Pi\|_{L^{q}(\mathbb{R}^{3})}
\|u\|^{2}_{L^{\f{2q}{q-1}}(\mathbb{R}^{3})}.
\ea\ee
By the Gagliardo-Nirenberg inequality, we obtain for $\f97 \leq q \leq 3$,
$$\ba
\||u|^{\f32}\| _{L^{\f{4q}{3(q-1)}}(\mathbb{R}^{3})}\leq C\||u|^{\f32}\|^{\f{7q-9}{4q}} _{L^{2}(\mathbb{R}^{3})}\|\nabla|u|^{\f32}\| _{L^{2}(\mathbb{R}^{3})}^{\f{ 9-3q}{4q}},
\ea$$
which yields
$$\ba
\|u\|^{2}_{L^{\f{2q}{q-1}}(\mathbb{R}^{3})}=\||u|^{\f32}\|^{\f{4}{3}} _{L^{\f{4q}{3(q-1)}}(\mathbb{R}^{3})}\leq C\||u|^{\f32}\|^{\f{7q-9}{3q}} _{L^{2}(\mathbb{R}^{3})}\|\nabla|u|^{\f32}\| _{L^{2}(\mathbb{R}^{3})}^{\f{ 3- q}{ q}}.
\ea$$
Inserting this into \eqref{6.34} and using Young's inequality, we get
\be\ba\label{6.35}
\B|\int_{\mathbb{R}^{3}} u\cdot\nabla \Pi |u| dx\B|
&\leq
C\|\nabla\Pi\|_{L^{q}(\mathbb{R}^{3})}
\||u|^{\f32}\|^{\f{7q-9}{3q}} _{L^{2}(\mathbb{R}^{3})}\|\nabla|u|^{\f32}\| _{L^{2}(\mathbb{R}^{3})}^{\f{ 3- q}{ q}}\\
&\leq
C\|\nabla\Pi\|_{L^{q}(\mathbb{R}^{3})}^{\f{2q}{3 q-3}}
\| u \|^{\f{7q-9}{3q-3}} _{L^{3}(\mathbb{R}^{3})}+\f{1}{54}\|\nabla|u|^{\f32}\| _{L^{2}(\mathbb{R}^{3})}^{2}.
\ea\ee
Substituting \eqref{6.35}, \eqref{6.29} and \eqref{6.30} into \eqref{6.23}, we obtain
\be\ba
&\f{1}{3}\f{d}{dt}\int_{\mathbb{R}^{3}}|u|^{3}dx+\int_{\mathbb{R}^{3}}|\nabla u|^{2}|u|^{2}dx
+\f{1}{18}\int_{\mathbb{R}^{3}}|\nabla| u|^{\f32}|^{2}dx\\
&\leq
C \|\nabla\Pi\|_{L^{q}(\mathbb{R}^{3})}^{\f{2q}{3 q-3}}
\| u \|^{\f{7q-9}{3q-3}} _{L^{3}(\mathbb{R}^{3})}+\|n\|_{L^{2}(\mathbb{R}^{3})}^{4}\| u  \|^{3} _{L^{3}(\mathbb{R}^{3})} +C\|j\|^{2}_{L^{\f32}(\mathbb{R}^{3})} \|u \|_{L^{3}(\mathbb{R}^{3})}.
\ea\ee
It is not difficult to verify that that $\f{7q-9}{3q-3}\leq3$. According to \eqref{6.32} and \eqref{6.33}, we may apply Gr\"onwall's inequality  \eqref{gronwall3} in Lemma \ref{gronwall} to \eqref{6.33}    to deduce that $|u|^{\f3 2}\in L^{2}(0,T; L^{6}(\mathbb{R}^{3}))$, and hence $u \in L^{3}(0,T; L^{9}(\mathbb{R}^{3}))$.
This completes the proof.
\end{proof}

\appendix
\section{Higher moment estimates}\label{appendix}

Before we present  higher moment estimates $D^{s}   f\langle v\rangle^{k}$ for $s\geq 3$, we give a comparison between
$\iint_{\mathbb{R}^{d}\times\mathbb{R}_{v}^{d}}f^{\ell}|v|^{k\ell}dvdx$ and $\iint_{\mathbb{R}^{d}\times\mathbb{R}_{v}^{d}} f^{\ell}\langle v\rangle^{k\ell}dvdx$.

\begin{lemma}\label{higermomentsestimate2}
Let $(f, u)$ be a weak solution to the Vlasov equation $\eqref{NSV}_{2}$ on $\mathbb{R}^{d}\times \mathbb{R}^{d}_{v}$.
If the initial datum $f_{0}$ satisfies
$$
\left\|\langle v\rangle^{k} f_{0}\right\|_{L^{\ell}\left(\mathbb{R}^{d} \times \mathbb{R}^{d}_{v}\right)}<\infty, ~ \ell \geq1, ~ k\geq0,
$$
and $u\in L^{1}(0,T;L^{k\ell+d +\eta }(\mathbb{R}^{d}))$, where $\eta $ is a sufficiently small positive constant, then
\be\ba
\left\|\langle v\rangle^{k} f\right\|_{L^{\infty}(0,T;L^{\ell}_{x,v} )}
 &\leq e^{\left(d-\frac{d}{\ell}\right)T}\B[ \left\|\langle v\rangle^{k} f_{0}\right\|_{ L^{\ell}_{x,v} } +\|f\|_{L ^{\infty}_{x,v}} \B(\int_{0}^{T}
\|u\|_{L_{x}^{d+k\ell+\eta} }ds\B)^{\f{d+k\ell+\eta}{\ell} } \B].
\ea\ee
Moreover, if $u\in L^{1}(0,T;L^{kp+dp +(p+1)\eta}(\mathbb{R}^{d}))$, then
\be\ba \label{3.14} &\B\| \int_{\mathbb{R}_{v}^{d}} f |v|^{k} dv\B\|_{L^{\infty}(0,T;L_{x}^{p}) }\\
\leq & C e^{\left(d-\frac{n}{\ell}\right)T}\B[ \left\|\langle v\rangle^{k+\f{d(p-1)}{p}+\eta} f_{0}\right\|_{ L^{p}_{x,v} } +\|f_{0}\|_{L_{v,x}^{\infty} } \B(\int_{0}^{T}
\|u\|_{L^{dp+kp+(p+1)\eta}_{x} }ds\B)^{\f{dp+kp+(p+1)\eta}{p} } \B].
\ea\ee
\end{lemma}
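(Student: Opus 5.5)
The plan follows the proof of Lemma \ref{higermomentsestimate}, with the weight $|v|^{k\ell}$ replaced by $\langle v\rangle^{k\ell}$.

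\emph{Weighted energy identity.} Multiply the Vlasov equation $\eqref{NSV}_2$ by $\ell f^{\ell-1}\langle v\rangle^{k\ell}$ and integrate. The transport term $v\cdot\nabla_x f$ drops out. Write $\nabla_v\cdot[f(u-v)]f^{\ell-1}=\frac1\ell (u-v)\cdot\nabla_v f^\ell-df^\ell$ and integrate by parts in $v$, using $\nabla_v\langle v\rangle^{k\ell}=k\ell\langle v\rangle^{k\ell-2}v$. This gives
\begin{align*}
\frac1\ell\frac{d}{dt}\|\langle v\rangle^k f\|_{L^\ell_{x,v}}^\ell + k\iint f^\ell \langle v\rangle^{k\ell-2}|v|^2\,dvdx
= k\iint u\cdot v\, f^\ell\langle v\rangle^{k\ell-2}\,dvdx+\Big(d-\frac d\ell\Big)\iint f^\ell\langle v\rangle^{k\ell}\,dvdx .
\end{align*}
The second term on the left is nonnegative and is discarded. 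Unlike the $|v|$-measure case, it no longer exactly matches $\|\langle v\rangle^k f\|^\ell$, so we do not use it for absorption. This is where the loss of $\eta$ enters.

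\emph{Estimating the drift term.} Bound $|u\cdot v|\langle v\rangle^{k\ell-2}\le |u|\langle v\rangle^{k\ell-1}$. Then apply H\"older in $x$ with $u\in L^{d+k\ell+\eta}_x$ against $\int f^\ell\langle v\rangle^{k\ell-1}dv$ in the dual exponent $r'$, where $r=d+k\ell+\eta$.

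To control that quantity, use the $\langle v\rangle$-version $\eqref{2.2}_2$ of the kinetic interpolation, via the splitting \eqref{2.9}, applied to $g=f^\ell$. Interpolate between $\|f^\ell\|_{L^\infty}\le C\|f_0\|_{L^\infty}^\ell$ (from \eqref{lwuqiong}) and the moment $\langle v\rangle^{k\ell}$ pointwise in $x$. This gives
\[
\int f^\ell\langle v\rangle^{k\ell-1}dv\le C\|f_0\|_{L^\infty}^{\frac{\ell}{d+k\ell}}\Big(\int f^\ell\langle v\rangle^{k\ell}dv\Big)^{\frac{d+k\ell-1}{d+k\ell}} .
\]
The exponent $r'=\frac{d+k\ell}{d+k\ell-1}$ is then exactly critical, and the clean $|v|$-version is not available because of the inhomogeneity of $\langle v\rangle$ near $v=0$.

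\emph{Main obstacle: the small-$|v|$ region.} The bounded-velocity part $\int_{|v|\le 1}$ carries no decay in $v$, so we only know $\int f^\ell dv\in L^1_x$. Recovering the critical Lebesgue exponent there fails at the endpoint. I would pay with $\eta$: choose the interpolation with a slightly smaller moment exponent (use the moment $k\ell$ to control $k\ell-1+\frac{\eta}{\ell}$-type weights), so the required dual exponent becomes slightly below critical. This is paid for by $u\in L^{d+k\ell+\eta}$.

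After this, one obtains
\[
\frac{d}{dt}X\le C\|u\|_{L^{d+k\ell+\eta}}\|f_0\|_\infty^{\theta\ell}X^{a}+(\ell d-d)X,\qquad X=\|\langle v\rangle^kf\|^\ell_{L^\ell},
\]
with $1-a=\frac{\ell}{d+k\ell+\eta}$ up to normalization. Lemma \ref{gronwall}(3) then gives $X^{1/\ell}$ bounded by $e^{(d-d/\ell)T}$ times the bracket, with power $\frac{1}{1-a}\cdot\frac1\ell=\frac{d+k\ell+\eta}{\ell}$ on $\int_0^T\|u\|$. The bracket is split via $(A+B)^{1/\ell}\le A^{1/\ell}+B^{1/\ell}$.

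\emph{Second estimate \eqref{3.14}.} Apply Lemma \ref{lem2.2} with $r=k$, exponent $p$, and weight $\tilde k=k+\frac{d(p-1)}{p}+\eta>r+\frac{d(p-1)}{p}$:
\[
\Big\|\int f|v|^k dv\Big\|_{L^p_x}\le C\|\langle v\rangle^{\tilde k}f\|_{L^p_{x,v}} .
\]
Then invoke the first estimate with $\ell=p$ and $k$ replaced by $\tilde k$. The required integrability of $u$ is $d+\tilde kp+\eta=dp+kp+(p+1)\eta$, which matches the hypothesis. The resulting power is $\frac{dp+kp+(p+1)\eta}{p}$, as stated.
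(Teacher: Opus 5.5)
Your proof is correct and has the same skeleton as the paper's: the same weighted identity, H\"older in $x$ against $u\in L^{d+k\ell+\eta}_x$, Lemma~\ref{gronwall}(3), and then Lemma~\ref{lem2.2} with $\tilde k=k+\frac{d(p-1)}{p}+\eta$ and $\ell=p$. The one genuine difference is how you bound $\int f^\ell\langle v\rangle^{k\ell-1}dv$.

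The paper writes $\langle v\rangle^{k\ell-1}=\langle v\rangle^{k\ell(r-1)/r}\,\langle v\rangle^{k\ell/r-1}$ with $r=d+k\ell+\eta$ and applies H\"older in $v$. The second factor produces $\int\langle v\rangle^{-(d+\eta)}dv$, and that integral is exactly where $\eta>0$ is forced.

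You use the kinetic interpolation $\eqref{2.2}_{2}$ instead, and in that route your ``small-$|v|$ obstacle'' does not exist. Your displayed inequality with weight $k\ell-1$ holds pointwise in $x$:
\begin{itemize}
\item Where $\int f^\ell\langle v\rangle^{k\ell}dv\le\|f\|_{L^\infty}^\ell$, it follows trivially from $\langle v\rangle^{k\ell-1}\le\langle v\rangle^{k\ell}$.
\item Elsewhere, the usual optimal cut-off radius is admissible.
\end{itemize}
So that inequality already closes the estimate with $u\in L^1(0,T;L^{d+k\ell})$ and power $\frac{d+k\ell}{\ell}$. In your route, $\eta$ is needed only to match the stated hypothesis, because $L^{d+k\ell}$ and $L^{d+k\ell+\eta}$ are not nested on $\mathbb{R}^d$. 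The discarded term $k\iint f^\ell\langle v\rangle^{k\ell-2}|v|^2$ has nothing to do with the loss of $\eta$ either.

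The exact choice you need is $\langle v\rangle^{k\ell-1}\le\langle v\rangle^{m}$ with $m=k\ell-1+\frac{\eta}{d+k\ell+\eta}$ (not $\frac{\eta}{\ell}$). Interpolating between $\|f\|_{L^\infty}$ and the $\langle v\rangle^{k\ell}$-moment then gives
\[
\int f^\ell\langle v\rangle^{k\ell-1}dv\le C\|f\|_{L^\infty}^{\frac{\ell}{d+k\ell+\eta}}\Big(\int f^\ell\langle v\rangle^{k\ell}dv\Big)^{\frac{d+k\ell+\eta-1}{d+k\ell+\eta}},
\]
which is exactly the paper's pointwise bound. The trade-off: the paper's route is a one-line H\"older with an intrinsic $\eta$-loss, while yours costs a case distinction but shows the $\langle v\rangle$-weight loses nothing at the critical exponent.

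There is one bookkeeping slip. With $X=\|\langle v\rangle^kf\|_{L^\ell_{x,v}}^\ell$ one has $1-a=\frac{1}{d+k\ell+\eta}$, not $\frac{\ell}{d+k\ell+\eta}$; the latter value belongs to the normalization $X^{1/\ell}$. Only $1-a=\frac{1}{d+k\ell+\eta}$ is consistent with your final power $\frac{1}{(1-a)\ell}=\frac{d+k\ell+\eta}{\ell}$.
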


\begin{remark}
In contrast to \eqref{3.2}, the estimate \eqref{3.14} requires stronger regularity assumptions on the velocity field $u$.
However, it seems that the approach used for \eqref{3.2} appears to break down, whereas the technique applied to \eqref{3.14} remains effective for estimating higher moments.
\end{remark}

\begin{proof}
By multiplying the Vlasov equation $\eqref{NSV}_{2}$ by $\langle v\rangle^{k \ell} f^{\ell-1}$ and integrating over $\mathbb{R}^{d} \times \mathbb{R}^{d}$, we obtain
\be\ba\label{3.280}
\frac{1}{\ell} \frac{d}{d t}\left\|\langle v\rangle^{k} f\right\|_{L^{\ell}\left(\mathbb{R}^{d} \times \mathbb{R}^{d}_{v}\right)}^{\ell}
&=
-\frac{1}{\ell} \iint_{\mathbb{R}^{d}\times\mathbb{R}_{v}^{d}} \left(u \cdot \nabla_{v}\right) f^{\ell}\langle v\rangle^{k \ell} d v d x \\
&\quad
+ \iint_{\mathbb{R}^{d}\times\mathbb{R}_{v}^{d}} \nabla_{v} \cdot(v f)\langle v\rangle^{k\ell} f^{\ell-1} d x d v \\
&=: I + II
\ea\ee
Here, we have used $ \iint_{\mathbb{R}^{d}\times\mathbb{R}_{v}^{d}} v\cdot\nabla_{x}f f^{\ell-1} \langle v\rangle^{\ell k}dvdx=0$.

Thanks to  integration by parts, we conclude by  H\"older's inequality that
\be\ba\label{3.290}
|I |
&\leq C \iint_{\mathbb{R}^{d}\times\mathbb{R}_{v}^{d}} \langle v\rangle^{k \ell-1}|u| f^{\ell} d v d x\\
&\leq C \|u\|_{L^{d+k\ell+\eta}\left(\mathbb{R}^{d}\right)}
\B\|\int_{\mathbb{R}^{d}_{v}}\langle v\rangle^{k\ell-1}|u| f^{\ell} d v \B\|_{L^{\f{d+k\ell+\eta}{d-1+k\ell+\eta}}\left(\mathbb{R}^{d}\right)}.
\ea\ee
Utilizing the H\"older inequality again, we get
$$\ba
\B|\int_{\mathbb{R}^{d}_{v}}\langle v\rangle^{k\ell-1} f^{\ell} d v\B|
&=
\B|\int_{\mathbb{R}^{d}_{v}} \langle v\rangle^{\f{k\ell(d-1+k\ell+\eta)}{d+k\ell+\eta}} f^{\ell}
\langle v\rangle^{\f{k\ell }{d+k\ell+\eta}-1} dv\B| \\
&\leq
\left(\int_{\mathbb{R}^{d}_{v}}\langle v\rangle^{k \ell} f^{ \frac{\ell(d+1+k\ell+\eta)}{d-1+k\ell+\eta}} d v\right)^{1-\frac{1}{d+k\ell+\eta}}\left(\int_{\mathbb{R}_{v}^{d}} \frac{1}{\left.\langle v\rangle^{d+\eta} d v\right.}\right)^{\frac{1}{d+k\ell+\eta}}\\
&\leq
C \left\|\langle v\rangle^{k} f\right\|^{\ell-\frac{\ell}{d+k\ell+\eta}}_{L^{\ell}\left( \mathbb{R}^{d}_{v}\right)}\|f\|_{L^{\infty}\left(\mathbb{R}^{d}_{v} \right)}^{\frac{\ell}{d+k\ell+\eta}}.
\ea$$
Substituting this into \eqref{3.290}, and using \eqref{lwuqiong}, we deduce
\be\label{3.300}
\begin{aligned}
I
&\leq
C \|u\|_{L^{d+k\ell+\eta}\left(\mathbb{R}^{d}\right)} \left\|\langle v\rangle^{k} f\right\|^{\ell-\frac{\ell}{d+k\ell+\eta}}_{L^{\ell}\left( \mathbb{R}^{d}\times\mathbb{R}_{v}^{d}\right)}\|f\|_{L^{\infty}\left(\mathbb{R}^{d}\times\mathbb{R}_{v}^{d} \right)}^{\frac{\ell}{d+k\ell+\eta}} \\
&\leq
C \|u\|_{L^{d+k\ell+\eta}\left(\mathbb{R}^{d}\right)} \left\|\langle v\rangle^{k} f\right\|^{\ell-\frac{\ell}{d+k\ell+\eta}}_{L^{\ell}\left( \mathbb{R}^{d}\times\mathbb{R}_{v}^{d}\right)}\|f_{0}\|_{L^{\infty} \left( \mathbb{R}^{d}\times\mathbb{R}_{v}^{d}\right)}^{\frac{\ell}{d+k\ell+\eta}}.
\end{aligned}
\ee
Now we estimate the term $II$. Integrating by parts, we write
$$
\begin{aligned}
II
&=
n \iint_{\mathbb{R}^{d}\times\mathbb{R}_{v}^{d}} \langle v\rangle^{k \ell} f^{\ell} d v d x+\frac{1}{\ell}\iint_{\mathbb{R}^{d}\times\mathbb{R}_{v}^{d}} v \cdot \nabla_{v} f^{\ell}\langle v\rangle^{k \ell} d v d x \\
&=
\left(d-\frac{d}{\ell}\right) \iint_{\mathbb{R}^{d}\times\mathbb{R}_{v}^{d}} \langle v\rangle^{k \ell} f^{\ell} d v d x-k \iint_{\mathbb{R}^{d}\times\mathbb{R}_{v}^{d}} |v|^{2}\langle v\rangle^{k \ell-2} f^{\ell} d v d x,
\end{aligned}
$$
from which it follows that
\be\ba\label{3.301}
&\frac{1}{\ell} \frac{d}{d t}\left\|\langle v\rangle^{k} f\right\|_{L^{\ell}\left(\mathbb{R}^{d} \times \mathbb{R}^{d}_{v}\right)}^{\ell}+ k \iint_{\mathbb{R}^{d}\times\mathbb{R}_{v}^{d}} |v|^{2}\langle v\rangle^{k \ell-2} f^{\ell} d v d x \\
&=
I + \left(d-\frac{d}{\ell}\right)\iint_{\mathbb{R}^{d}\times\mathbb{R}_{v}^{d}} \langle v\rangle^{k \ell} f^{\ell} d v d x.
\ea\ee
A combination of \eqref{3.301} and \eqref{3.290}, we see that
$$
\begin{aligned}
& \frac{1}{\ell} \frac{d}{d t}\left\|\langle v\rangle^{k} f\right\|_{L^{\ell} (\mathbb{R}^{d} \times \mathbb{R}^{d}_{v} )}^{\ell}
+k \iint_{\mathbb{R}^{d}\times\mathbb{R}_{v}^{d}} |v|^{2}\langle v\rangle^{k \ell-2} f^{\ell} d v d x\\
&\leq  (d-\frac{d}{\ell} )\left\|\langle v\rangle^{k} f\right\|_{L^{\ell}\left(\mathbb{R}^{d} \times \mathbb{R}^{d}_{v}\right)}^{\ell}+C \|u\|_{L^{d+k\ell+\eta} (\mathbb{R}^{d} )} \left\|\langle v\rangle^{k} f\right\|^{\ell-\frac{\ell}{d+k\ell+\eta}}_{L^{\ell} ( \mathbb{R}^{d} \times \mathbb{R}^{d}_{v} )}\|f\|_{L^{\infty} (\mathbb{R}^{d} \times \mathbb{R}^{d}_{v} )}^{\frac{\ell}{d+k\ell+\eta}}.
\end{aligned}
$$
Applying Gr\"onwall's inequality \eqref{gronwall3} in Lemma \ref{gronwall}, we conclude that
\be\ba
\left\|\langle v\rangle^{k} f\right\|_{L^{\infty}(0,T;L_{x,v}^{\ell} )}^{\ell}
\leq e^{\left(d-\frac{d}{\ell}\right)T}\B[ \left\|\langle v\rangle^{k} f_{0}\right\|_{ L_{x,v}^{\ell} }^{\ell}+\|f_{0}\|_{L_{v,x}^{\infty} }^{ \ell }\B(\int_{0}^{T}\|u\|_{L_{x}^{d+k\ell+\eta} }ds \B)^{d+k\ell+\eta}\B],
\ea\ee
that is,
\be\ba
\left\|\langle v\rangle^{k} f\right\|_{L^{\infty}(0,T;L_{x,v}^{\ell} )}
&\leq e^{\left(d-\frac{d}{\ell}\right)T}\B( \left\|\langle v\rangle^{k} f_{0}\right\|_{ L_{x,v}^{\ell} } +\|f_{0}\|_{L_{v,x}^{\infty} } \B( \int_{0}^{T}
\|u\|_{L_{x}^{d+k\ell+\eta} }ds \B)^{\f{d+k\ell+\eta}{\ell} }\B].
\ea\ee
Finally, applying inequality \eqref{2.10} from Lemma \ref{lem2.2}, we further observe that
\be\ba
\B\| \int_{\mathbb{R}_{v}^{d}} f |v|^{k}dv \B\|_{L_{x}^{p} }
&\leq
C\|\langle v\rangle^{k+\f{d(p-1)}{p}+\eta}f \|_{L^{p}_{x,v} } \\
&\leq
C e^{\left(d-\frac{d}{\ell}\right)T}\B[ \left\|\langle v\rangle^{k+\f{d(p-1)}{p}+\eta} f_{0}\right\|_{ L^{p}_{x,v} } \\
&\quad
+ \|f_{0}\|_{L_{v,x}^{\infty} } \B(\int_{0}^{T}
\|u\|_{L_{x}^{dp+kp+(p+1)\eta} }ds\B)^{\f{dp+kp+(p+1)\eta}{p} } \B].
\ea\ee
This completes the proof of the lemma.
\end{proof}
For higher moment estimates, we have
\begin{lemma}\label{a.2}
Let the pair $(u, f)$ be a solution
of the Vlasov equation \eqref{Vlasov} on $\mathbb{R}^{d}$ for $d=2, 3$.
Then, there holds
\begin{align}
 \f{d}{dt}\|D^{3}   f\langle v\rangle^{k}\|_{L^{2}_{x,v}}^{2} \leq&  C
  \|D^{3}_{x} u \|_{L^{2}_{x}}\| \nabla_{x}^{d-2}\partial_{ v_{i}} f\langle v\rangle^{k}\|_{L_{x}^{2}(\mathbb{R}^{d})}^{\f12} \| D^{3} f\langle v\rangle^{ k}\|_{L^{2}_{x,v}}^{\f32}\nonumber\\  &+C(\|\nabla u\|_{L^{\infty}_x}+\|u\|_{L^{\infty}_x}+
  \|D_{x}^{2}u\|_{L^{3}_{x}} +1) \|D^{3} f\langle v\rangle^{ k}\|_{L^{2}_{x,v}}^{2},\label{pri4}\\
  \f{d}{dt}\|D^{s}  f\langle v\rangle^{k}\|_{L^{2}_{x,v}}^{2} \leq
  &C
  \|D^{s}_{x} u \|_{L^{2}_{x}}\|  \nabla_{x}^{d-2}\partial_{ v_{i}} f\langle v\rangle^{k}\|_{L_{x}^{2}(\mathbb{R}^{2})}^{\f12}\|\nabla^{2} \partial_{ v_{i}} f\langle v\rangle^{k}\|_{L_{x}^{2}(\mathbb{R}^{2})}^{\f12}\| D^{s} f\langle v\rangle^{ k}\|_{L^{2}_{x,v}}\nonumber\\  &+C(\|\nabla u\|_{L^{\infty}_x} +\|u\|_{L^{\infty}_x}+\|D_{x}^{2}u\|_{L^{3}_{x}} +1)\|D^{s }  f\langle v\rangle^{ k}\|_{L^{2}_{x,v}}^{2}\nonumber\\
   &+C\|D_{x}^{\alpha}u\|_{L^{2}_x}^{1-\f{n}{4}} \|D_{x}^{\alpha+1}u\|_{L^{2}_x}^{\f{n}{4}}
   \|D^{\beta}_{v}\partial_{v_{i}}f\langle v\rangle^{ k}\|_{L^{2}_{x,v}}^{1-\f{n}{4}}
\|D_{x}D^{\beta}_{v}\partial_{v_{i}}f\langle v\rangle^{ k}\|_{L^{2}_{x,v}}^{\f{n}{4}}\| D_{x}^{\alpha}D^{\beta}_{v}f
  \langle v\rangle^{ k}\|_{L^{2}_{x,v}},\label{pri5}
 \end{align}
 where $s\geq4$,
$3\leq\alpha\leq s-1,1\leq\beta\leq s-3$.
\end{lemma}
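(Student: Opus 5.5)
We would follow the scheme of Lemma \ref{lemma3.4} one derivative higher. Fix a multi-index $D^{s}=\partial_x^{\alpha}\partial_v^{\beta}$ with $|\alpha|+|\beta|=s$. We apply it to the transport form $f_t+v\cdot\nabla_x f+(u-v)\cdot\nabla_v f=d f$ and take the $L^2$ inner product with $D^{s}f\langle v\rangle^{2k}$. The free transport term vanishes after integration by parts in $x$. The top-order pieces $(u-v)\cdot\nabla_v D^{s}f$ are integrated by parts in $v$ exactly as in the terms $IV$, $III'$, $III''$ of Lemma \ref{lemma3.4}. They give $C(\|u\|_{L^\infty_x}+1)\|D^{s}f\langle v\rangle^{k}\|_{L^2_{x,v}}^2$, because $\nabla_v\langle v\rangle^{2k}$ costs only $\langle v\rangle^{2k-1}$.

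The commutators come from Leibniz's rule and fall into two families. The first family comes from $v$-derivatives hitting $v$, either in $v\cdot\nabla_x$ or in $-v\cdot\nabla_v$. These have no $u$ coefficient and trade one $\partial_v$ for one $\partial_x$, or reproduce $D^{s}f$. By Cauchy--Schwarz they are bounded by $C\|D^{s}f\langle v\rangle^k\|^2_{L^2_{x,v}}$.

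The second family comes from $x$-derivatives hitting $u$. For $s=3$ these are $\sum_{1\le|\gamma|\le|\alpha|}\partial_x^{\gamma}u\cdot\nabla_v\partial_x^{\alpha-\gamma}\partial_v^{\beta}f$, and we sort them by $|\gamma|$.
\begin{itemize}
\item[(i)] For $|\gamma|=1$ the coefficient $\nabla u$ sits in $L^\infty_x$, and the remaining factor is again of order $s$. This gives $C\|\nabla u\|_{L^\infty_x}\|D^{s}f\langle v\rangle^k\|^2_{L^2_{x,v}}$.
\item[(ii)] For $|\gamma|=2$ we put $D^2_xu$ in $L^3_x$ and the $f$-factor, of order $s-1$ with a $v$-derivative, in $L^6_x$. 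Sobolev embedding in $x$ for each fixed $v$, followed by integration in $v$ with the weight, bounds this by the order-$s$ norm. The result is $C\|D_x^2u\|_{L^3_x}\|D^{s}f\langle v\rangle^k\|^2_{L^2_{x,v}}$.
\item[(iii)] For $|\gamma|=s$ (so $|\alpha|=s$) the coefficient $D^{s}_xu$ can only be placed in $L^2_x$. The factor $\nabla_v f\langle v\rangle^k$ must then be put in $L^\infty_x$, and this has to be done pointwise in $v$ before integrating in $v$, as in \eqref{3.94}--\eqref{3.95}.
\end{itemize}

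Case (iii) is where the specific exponents in \eqref{pri4} and \eqref{pri5} arise. For $s=3$ we would use an Agmon/Gagliardo--Nirenberg inequality $\|g\|_{L^\infty_x}\le C\|\nabla^{d-2}_x g\|^{1/2}_{L^2_x}\|\nabla^{2}_x g\|^{1/2}_{L^2_x}$, with $g=\partial_{v_i}f$; in $d=2,3$ this holds after possibly interpolating with lower norms. Since $\nabla_x^2\partial_{v_i}f$ is part of $D^3f$, Cauchy--Schwarz in $v$ gives the term $\|D^3_xu\|_{L^2}\|\nabla_x^{d-2}\partial_{v_i}f\langle v\rangle^k\|^{1/2}\|D^3f\langle v\rangle^k\|^{3/2}$. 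For $s\ge4$ the factor $\nabla^2_x\partial_{v_i}f$ is no longer top order, so it is kept separately, which yields the first line of \eqref{pri5}.

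The intermediate terms with $3\le|\gamma|\le s-1$, written $D^{\alpha}_xu\cdot\nabla_v D_x^{\cdot}D_v^{\beta}f$ in the notation of \eqref{pri5}, are handled by Hölder's inequality with $L^4_x\times L^4_x\times L^2_x$. We apply the Gagliardo--Nirenberg inequality $\|h\|_{L^4}\le C\|h\|^{1-d/4}_{L^2}\|\nabla h\|^{d/4}_{L^2}$ to both $D^{\alpha}_xu$ and $D_v^{\beta}\partial_{v_i}f\langle v\rangle^k$; for the latter it is applied in $x$ pointwise in $v$ and followed by Hölder in $v$, as in \eqref{3.95}. This produces exactly the last line of \eqref{pri5}. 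The remaining lower terms, with $1\le|\gamma|\le 2$ but lower total order, are absorbed into the $\|\nabla u\|_{L^\infty}+\|D^2u\|_{L^3}$ term using the same embeddings.

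The main obstacle is the bookkeeping of the index ranges in the second family. Every split $\gamma\subset\alpha$ must land in one of the listed shapes, and one must check that the $f$-factor never exceeds order $s$ after the Sobolev step; this is what restricts $\beta$ to $1\le\beta\le s-3$, and the weight $\langle v\rangle^{k}$ must be carried through the $v$-integration consistently. Summing over all multi-indices of order $s$ then gives \eqref{pri4} and \eqref{pri5}.
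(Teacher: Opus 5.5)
Your energy scheme is the same as the paper's: differentiate, pair with $D^sf\langle v\rangle^{2k}$, kill free transport, integrate the top-order $(u-v)\cdot\nabla_v$ term by parts, and sort the $u$-commutators by the number $|\gamma|$ of $x$-derivatives on $u$. For $s=3$ it is complete, since only $|\gamma|\in\{1,2,3\}$ occur and your cases (i)--(iii) give \eqref{pri4}.

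The gap is in your ``intermediate'' terms for $s\ge4$. Consider $3\le|\gamma|<|\alpha|$; this occurs for $\beta=0$, $s\ge4$, $3\le|\gamma|\le s-1$, and for $\beta\ge1$ with $4\le|\alpha|\le s-1$. There the $f$-factor is $\nabla_v\partial_x^{\alpha-\gamma}\partial_v^{\beta}f$, which still carries $x$-derivatives. Your $L^4\times L^4\times L^2$ H\"older step plus Gagliardo--Nirenberg therefore produces
\[
\|D_x^{j}u\|_{L^2_x}^{1-\frac d4}\|D_x^{j+1}u\|_{L^2_x}^{\frac d4}\,
\|D_x^{m-j}D_v^{\beta}\partial_{v_i}f\langle v\rangle^{k}\|_{L^2_{x,v}}^{1-\frac d4}
\|D_x^{m-j+1}D_v^{\beta}\partial_{v_i}f\langle v\rangle^{k}\|_{L^2_{x,v}}^{\frac d4}\,
\|D^{s}f\langle v\rangle^{k}\|_{L^2_{x,v}},\qquad 3\le j<m=|\alpha|.
\]
This is not the last line of \eqref{pri5}. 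There the $f$-factor is the pure $v$-derivative $D_v^{\beta}\partial_{v_i}f$, the order of $u$ equals the $x$-order of the test function, and $\beta\ge1$. When you say you apply Gagliardo--Nirenberg ``to $D_v^{\beta}\partial_{v_i}f$'', you have silently dropped the $D_x^{m-j}$. Among the terms with $|\gamma|\ge3$, only the endpoint $|\gamma|=|\alpha|$ has the stated shape.

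The missing ingredient is to interpolate each intermediate Leibniz term into the two endpoints. This is exactly the Kato--Ponce (Moser) commutator estimate that the paper applies in $x$, for fixed $v$, in the bounds leading to \eqref{A.17} and \eqref{a3.52}. With $g=\partial_{v_i}D_v^{\beta}f$ and $m=|\alpha|\ge2$,
\[
\|\nabla^{j}u\,\nabla^{m-j}g\|_{L^2_x}\le C\bigl(\|\nabla u\|_{L^\infty_x}\|\nabla^{m-1}g\|_{L^2_x}\bigr)^{\frac{m-j}{m-1}}\bigl(\|\nabla^{m}u\|_{L^{p}_x}\|g\|_{L^{q}_x}\bigr)^{\frac{j-1}{m-1}},
\]
with $(p,q)=(2,\infty)$ if $\beta=0$ and $(p,q)=(4,4)$ if $\beta\ge1$. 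You then square, weight by $\langle v\rangle^{2k}$, integrate in $v$, and apply Agmon (respectively Gagliardo--Nirenberg) pointwise in $v$. All terms with $1\le|\gamma|\le|\alpha|$ then collapse to $\|\nabla u\|_{L^\infty_x}\|D^sf\langle v\rangle^{k}\|_{L^2_{x,v}}^2$ plus the single endpoint term, which is what \eqref{pri5} records. Without this step you prove a different, still usable, inequality with extra terms.

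Relatedly, the range $3\le\alpha\le s-1$, $1\le\beta\le s-3$ is not forced by the $f$-factor staying of order $\le s$. It is what remains after $D_x^s$, $D_v^s$, $D_xD_v^{s-1}$, $D_x^2D_v^{s-2}$ are treated separately. The condition $\beta\ge1$ is what allows $D_x^{\alpha}u$ to go in $L^4$ at the cost of $D_x^{\alpha+1}u$.

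Finally, your case (ii), like \eqref{aa3.68} in the paper, uses $\|g\|_{L^6_x}\le C\|\nabla_xg\|_{L^2_x}$. This holds only for $d=3$; in $d=2$ a lower-order norm enters.
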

\begin{proof}
$\bullet$ Estimation of  $\f{d}{dt}\|D^{s}_{x}  f\langle v\rangle^{k}\|_{L^{2}_{x,v}}^{2}$. \\
Applying the   operator $D^{s}_{x}$ to the particle equation $\eqref{Vlasov}_{2}$, we discover  that
 \be
  D^{s}_{x}\partial_t f+v\cdot\nabla_{x}D^{s}_{x}f+\partial_{v_{i}}\cdot D^{s}_{x} (f u_i )  -\partial_{v_{i}}[D^{s}_{x}f   v_{i}  ] =0.
 \ee
 Taking product of  this equation with  $D^{s}_{x}f\langle v\rangle^{2k}$ and integrating, we get
  \be\ba\label{A.13}
 &\f{d}{dt}\|D^{s}_{x}  f\langle v\rangle^{k}\|_{L^{2}_{x,v}}^{2}\\
 =&- \iint_{\mathbb{R}^{d}\times\mathbb{R}_{v}^{d}}  v\cdot\nabla_{x}D^{s}_{x} D^{s}_{x}f\langle v\rangle^{2k}fdxdv- \iint_{\mathbb{R}^{d}\times\mathbb{R}_{v}^{d}}\partial_{v_{i}}D^{s}_{x} (f u_i )  D^{s}_{x}f\langle v\rangle^{2k}dxdv \\&+ \iint_{\mathbb{R}^{d}\times\mathbb{R}_{v}^{d}}\partial_{v_{i}}[D^{s}_{x}f   v_{i}  ] D^{s}_{x}f\langle v\rangle^{2k} dxdv\\&=I'+II'+III'.
 \ea\ee
 Due to integration by parts, we notice that
  \be\label{A.14}
I'=  \iint_{\mathbb{R}^{n}\times\mathbb{R}^{n}} v\cdot\nabla_{x}D^{s}_{x}f D^{s}_{x}f\langle v\rangle^{2k}dvdx=0.
  \ee
We reformulate $II'$ as
  \be\ba\label{A.15}
  II'  =&-\iint_{\mathbb{R}^{d}\times\mathbb{R}_{v}^{d}} [D^{s}_{x} (u_i\partial_{ v_{i}}f ) -u_iD^{s}_{x}  \partial_{v_{i}}f ] D^{s}_{x}f\langle v\rangle^{2k}dvdx+\f{1}{2}\iint_{\mathbb{R}^{d}\times\mathbb{R}_{v}^{d}} u_i (  D^{s}_{x}f)^{2}\langle v\rangle^{2k-2}v_{i}dvdx,
   \ea \ee
   where the integration by parts was used.\\
Making use of the H\"older inequality, we remark that
    \be\ba\label{A.16}
&\B|\iint_{\mathbb{R}^{d}\times\mathbb{R}_{v}^{d}} [D^{s}_{x} (u_i\partial_{ v_{i}}f ) -u_iD^{s}_{x}  \partial_{v_{i}}f ] D^{s}_{x}f\langle v\rangle^{2k}dvdx\B|\\
\leq&  \B(\iint_{\mathbb{R}^{d}\times\mathbb{R}_{v}^{d}} [D^{s}_{x} (u_i\partial_{ v_{i}}f ) -u_iD^{s}_{x}  \partial_{v_{i}}f ]^{2}\langle v\rangle^{2k}dvdx\B)^{\f12}\| D^{s}_{x}f\langle v\rangle^{ k}\|_{L^{2}_{x,v}}.
       \ea \ee
It follows from the Kato-Ponce commutator estimate
that
$$\ba
   \|  D^{s}_{x} (u_i\partial_{ v_{i}}f ) -u_iD^{s}_{x}  \partial_{v_{i}}f  \|_{L_{x}^{2}}\leq
   C(\|\partial_{ v_{i}}f\|_{L^{\infty}_{x}}\|D^{s}_{x} u \|_{L^{2}_{x}}+\|\nabla u\|_{L^{\infty}_x}\|D^{s-1}_{x}\partial_{ v_{i}}f\|_{L^{2}_{x}}).
   \ea $$
Hence, we rewrite \eqref{A.16} as
 % \be\ba   \B(\iint_{\mathbb{R}^{d}\times\mathbb{R}_{v}^{d}} [D^{s}_{x} (u_i\partial_{ v_{i}}f ) -u_iD^{s}_{x}  \partial_{v_{i}}f ]^{2}\langle v\rangle^{2k}dvdx\B)^{\f12}=
 % \B( \int\|  D^{s}_{x} (u_i\partial_{ v_{i}}f ) -u_iD^{s}_{x}  \partial_{v_{i}}f  \|^{2}_{L_{x}^{2}}\langle v\rangle^{2k}dv \B)^{\f12}  \ea \ee
   \be\ba \label{A.17}  &\B(\iint_{\mathbb{R}^{d}\times\mathbb{R}_{v}^{d}} [D^{s}_{x} (u_i\partial_{ v_{i}}f ) -u_iD^{s}_{x}  \partial_{v_{i}}f ]^{2}\langle v\rangle^{2k}dvdx\B)^{\f12}\\
       \leq& C
  \|D^{s}_{x} u \|_{L^{2}_{x}}\B(\int_{\mathbb{R}_{v}^{d}} \|\partial_{ v_{i}}f\|^{2}_{L^{\infty}_{x}}\langle v\rangle^{2k}dv \B)^{\f12}+C\|\nabla u\|_{L^{\infty}_x} \|D^{s-1}_{x}\partial_{ v_{i}}f\langle v\rangle^{ k}\|_{L^{2}_{x,v}}.
   \ea \ee
It is not difficult to verify that
\be\ba\label{A.18}
-\f{1}{2}\iint_{\mathbb{R}^{d}\times\mathbb{R}_{v}^{d}} u_i (  D^{s}_{x}f)^{2}\langle v\rangle^{2k-2}v_{i}dvdx\leq C\|u\|_{L^{\infty}_x} \| D^{s}_{x}f\langle v\rangle^{ k}\|_{L^{2}_{x,v}}^{2}.
\ea \ee
Inserting \eqref{A.17} and \eqref{A.18} into \eqref{A.15}, we know that
\be\ba\label{A.19}
|II'|\leq C
  \|D^{s}_{x} u \|_{L^{2}_{x}}\B(\int_{\mathbb{R}_{v}^{n}} \|\partial_{ v_{i}}f\|^{2}_{L^{\infty}_{x}}\langle v\rangle^{2k}dv \B)^{\f12}+C(\|\nabla u\|_{L^{\infty}_x} +\|u\|_{L^{\infty}_x}) \| D^{s} f\langle v\rangle^{ k}\|_{L^{2}_{x,v}}^{2}.
\ea \ee
By a simple manipulation, we conclude by the integration by parts that
$$\ba III' =&d\iint_{\mathbb{R}^{d}\times\mathbb{R}_{v}^{d}}  [D^{s}_{x}f   ]D^{s}_{x}f\langle v\rangle^{2k } dvdx-\f12\iint_{\mathbb{R}^{d}\times\mathbb{R}_{v}^{d}}   [D^{s}_{x}f   ] ^{2}\partial_{v_{i}}(\langle v\rangle^{2k-2 } v_{i}) dvdx,
 \ea $$
which means
      \be\ba\label{A.20} |III'|\leq C\| D^{s}_{x}f\langle v\rangle^{ k}\|_{L^{2}_{x,v}}^{2}.
   \ea \ee
Plugging   \eqref{A.14}, \eqref{A.19} and \eqref{A.20} into \eqref{A.13}, we find
\be\ba\label{key1}
\f{d}{dt}\|D^{s}_{x}  f\langle v\rangle^{k}\|_{L^{2}_{x,v}}^{2}\leq &C
  \|D^{s}_{x} u \|_{L^{2}_{x}}\B(\int_{\mathbb{R}^{n}}\|\partial_{ v_{i}}f\|^{2}_{L_{x}^{\infty}}\langle v\rangle^{2k}dv \B)^{\f12}\| D^{s}_{x}f\langle v\rangle^{ k}\|_{L^{2}_{x,v}}\\&+ C(\|\nabla u\|_{L^{\infty}_x} +\|u\|_{L^{\infty}_x}+1) \| D^{s} f\langle v\rangle^{ k}\|_{L^{2}_{x,v}}^{2}.
\ea \ee
$\bullet$ Estimation of  $\f{d}{dt}\|D^{s}_{v}  f\langle v\rangle^{k}\|_{L^{2}_{x,v}}^{2}$.

Applying the   operator $D^{s}_{v}$ to the Vlsov equation $\eqref{Vlasov}$, one writes
    $$\ba
     \partial_t D^{s}_{v}f+D_{v}^{s} (v\cdot\nabla_{x}f)+\nabla_{v}\cdot[D^{s}_{v}f u   ]- D^{s}_{v}\nabla_{v}\cdot [f v  ] =0.
   \ea $$
Dotting the above equation by $D^{s}_{v}f\langle v\rangle^{2k}$, one gets
    \be\ba
    \f{d}{dt}\|D^{s}_{v}  f\langle v\rangle^{k}\|_{L^{2}_{x,v}}^{2} =&-\iint_{\mathbb{R}^{d}\times\mathbb{R}_{v}^{d}} D_{v}^{s} (v\cdot\nabla_{x}f)D^{s}_{v}f\langle v\rangle^{2k} dxdv-\iint_{\mathbb{R}^{d}\times\mathbb{R}_{v}^{d}}\nabla_{v}[D^{s}_{v}f u   ]D^{s}_{v}f\langle v\rangle^{2k} dxdv\\&+\iint_{\mathbb{R}^{d}\times\mathbb{R}_{v}^{d}}\nabla_{v}D^{s}_{v}[f v  ]D^{s}_{v}f\langle v\rangle^{2k} dxdv.
   \ea \ee
   A routine computation gives rise to
$$\ba
  \iint_{\mathbb{R}^{d}\times\mathbb{R}_{v}^{d}} D^{s}_{v}(v\cdot\nabla_{x}f)  D^{s}_{v}f\langle v\rangle^{2k} dxdv
     =  \iint_{\mathbb{R}^{d}\times\mathbb{R}_{v}^{d}}  D_{v} v\cdot\nabla_{x}D^{s-1}_{v}fD^{s}_{v}f\langle v\rangle^{2k} dxdv.
   \ea $$
Hence, we derive from the H\"older inequality
\be\ba\label{A.23}
\B|\iint_{\mathbb{R}^{d}\times\mathbb{R}_{v}^{d}} D^{s}_{v}(v\cdot\nabla_{x}f)  D^{s}_{v}f\langle v\rangle^{2k} dxdv\B|\leq \|  \nabla_{x}D^{s-1}_{v}f\langle v\rangle^{ k}\|_{L^{2}_{x,v}}\| D^{s}_{v}f\langle v\rangle^{ k}\|_{L^{2}_{x,v}}.
 \ea \ee
Integration by parts gives
$$\ba\iint_{\mathbb{R}^{d}\times\mathbb{R}_{v}^{d}} \nabla_{v}\cdot[D^{s}_{v}f u   ]D^{s}_{v}f\langle v\rangle^{2k} dxdv =&-\iint_{\mathbb{R}^{d}\times\mathbb{R}_{v}^{d}}[D^{s}_{v}f  ]^{2} u \cdot\nabla_{v}(\langle v\rangle^{2k}) dxdv,
\ea $$
which it follows from that
\be\ba\label{A.24}\iint_{\mathbb{R}^{d}\times\mathbb{R}_{v}^{d}} \nabla_{v}[D^{s}_{v}f u   ]D^{s}_{v}f\langle v\rangle^{2k} dxdv   \leq C\|u\|_{L^{\infty}_x}\| D^{s}_{v}f\langle v\rangle^{ k}\|_{L^{2}_{x,v}}^{2}.
\ea \ee
Using integrating by parts once again, we arrive at
$$\ba
&\iint_{\mathbb{R}^{d}\times\mathbb{R}_{v}^{d}}
\nabla_{v}\cdot D^{s}_{v}[f v  ]D^{s}_{v}f \langle v\rangle^{2k} dxdv\\
=&d\iint_{\mathbb{R}^{d}\times\mathbb{R}_{v}^{d}}   D^{s}_{v}[f   ]D^{s}_{v}f \langle v\rangle^{2k} dxdv-\f{1}{2}\iint_{\mathbb{R}^{d}\times\mathbb{R}_{v}^{d}} D^{s}_{v}fD^{s}_{v}f \partial_{v_j}[\langle v\rangle^{2k}v_j ]dxdv\\&+d\iint_{\mathbb{R}^{d}\times\mathbb{R}_{v}^{d}}[D_{v}v_j D^{s-1}_{v}\partial_{v_j}f ]D^{s}_{v}f \langle v\rangle^{2k} dxdv.
\ea $$
As a consequence, we discover that
\be\ba\label{A.25}
\B|\iint_{\mathbb{R}^{d}\times\mathbb{R}_{v}^{d}}
 \nabla_{v}\cdot D^{s}_{v}[f v  ]D^{s}_{v}f \langle v\rangle^{2k} dxdv\B|
\leq  C\|D^{s}_{v}f \langle v\rangle^{ k}\|_{L^{2}_{x,v}}^{2}.
\ea \ee
Collecting the previous estimates \eqref{A.23}-\eqref{A.25}, we arrive at
\be\ba\label{key2}
\f{d}{dt}\|D^{s}_{v}  f\langle v\rangle^{k}\|_{L^{2}_{x,v}}^{2}\leq &C\|  \nabla_{x}D^{s-1}_{v}f\langle v\rangle^{ k}\|_{L^{2}_{x,v}}\| D^{s}_{v}f\langle v\rangle^{ k}\|_{L^{2}_{x,v}}+C\|u\|_{L^{\infty}_x}\| D^{s}_{v}f\langle v\rangle^{ k}\|_{L^{2}_{x,v}}^{2}+C\|D^{s}_{v}f \langle v\rangle^{ k}\|_{L^{2}_{x,v}}^{2}.
\ea \ee
%After an integration in time and the use of the Gronwall inequality we obtain
%The Gronwall inequality gives the claimed result.
$\bullet$ Estimation of  $\f{d}{dt}\|D^{s-1}_{v}D_{x}f\langle v\rangle^{ k}\|_{L^{2}_{x,v}}^{2}$.\\
From the kinetic equation $\eqref{NSV}$, we observe that
\be\ba  \partial_tD_{x} D^{s-1}_{v}f+D_{v}^{s-1} (v\cdot\nabla_{x}D_{x}f)+\nabla_{v}\cdot D_{x}[D^{s-1}_{v}f u   ]-\nabla_{v}D^{s-1}_{v}[D_{x}f v  ] =0.
   \ea \ee
Taking the $L^{2}$ inner product of the above equation with $ D^{s-1}_{v}f\langle v\rangle^{ 2k}$, we notice that
 \be\ba \label{a3.39}
    \f{d}{dt}\|D^{s-1}_{v}D_{x}f\langle v\rangle^{ k}\|_{L^{2}_{x,v}}^{2}
  =&-\iint_{\mathbb{R}^{d}\times\mathbb{R}_{v}^{d}} D_{v}^{s-1} (v\cdot\nabla_{x}D_{x}f)D_{x} D^{s-1}_{v}f\langle v\rangle^{ 2k}dxdv\\&-\iint_{\mathbb{R}^{d}\times\mathbb{R}_{v}^{d}}\nabla_{v}\cdot D_{x}[D^{s-1}_{v}f u   ]D_{x} D^{s-1}_{v}f\langle v\rangle^{ 2k}dxdv\\&+\iint_{\mathbb{R}^{d}\times\mathbb{R}_{v}^{d}}\nabla_{v}D^{s-1}_{v}[D_{x}f v  ]D_{x} D^{s-1}_{v}f\langle v\rangle^{ 2k}dxdv.
    \ea \ee
It is straightforward to show that
   $$\ba  \iint_{\mathbb{R}^{d}\times\mathbb{R}_{v}^{d}} D_{v}^{s-1} (v\cdot\nabla_{x}D_{x}f)D_{x} D^{s-1}_{v}f\langle v\rangle^{ 2k}dxdv
      = \iint_{\mathbb{R}^{d}\times\mathbb{R}_{v}^{d}} (D_{v}v\cdot\nabla_{x}D_{x} D_{v}^{s-2}f)D_{x} D^{s-1}_{v}f\langle v\rangle^{ 2k}dxdv,
   \ea$$
   which leads to
   \be\ba\label{A.29}
   \B|\iint_{\mathbb{R}^{d}\times\mathbb{R}_{v}^{d}} D_{v}^{s-1} (v\cdot\nabla_{x}D_{x}f)D_{x} D^{s-1}_{v}f\langle v\rangle^{ 2k}dxdv
  \B| \leq \|\nabla_{x}D_{x} D_{v}^{s-2}f\langle v\rangle^{  k}\|_{L^{2}_{x,v}}\|D_{x} D^{s-1}_{v}f\langle v\rangle^{  k}\|_{L^{2}_{x,v}}.
\ea \ee
In the light of integrating by parts, one obtains
$$\ba
 & \iint_{\mathbb{R}^{d}\times\mathbb{R}_{v}^{d}}\nabla_{v}\cdot D_{x}[D^{s-1}_{v}f u   ]
D^{s-1}_{v}D_{x}f\langle v\rangle^{ 2k}dxdv \\ =&
-\f{1}{2}\iint_{\mathbb{R}^{d}\times\mathbb{R}_{v}^{d}}[D^{s-1}_{v}D_{x}f ]^{2}
 u\cdot\nabla_{v}[\langle v\rangle^{ 2k}]dxdv+\iint_{\mathbb{R}^{d}\times\mathbb{R}_{v}^{d}} D_{x}u\cdot\nabla_{v} [D^{s-1}_{v}f ]
D^{s-1}_{v}D_{x}f\langle v\rangle^{ 2k}dxdv.
\ea $$
Consequently, we infer that
\be\ba\label{A.30}
 & \iint_{\mathbb{R}^{d}\times\mathbb{R}_{v}^{d}}\nabla_{v}\cdot D_{x}[D^{s-1}_{v}f u   ]
D^{s-1}_{v}D_{x}f\langle v\rangle^{ 2k}dxdv\\
\leq &C\|u\|_{L^{\infty}_x}\|D_{x} D^{s-1}_{v}f\langle v\rangle^{  k}\|_{L^{2}_{x,v}}^{2}+C\|\nabla u\|_{L^{\infty}_x}\|  D^{s }_{v}f\langle v\rangle^{  k}\|_{L^{2}_{x,v}}\|D_{x} D^{s-1}_{v}f\langle v\rangle^{  k}\|_{L^{2}_{x,v}}.
\ea \ee
It is clear that
\be\ba\label{A.31}
&\iint_{\mathbb{R}^{d}\times\mathbb{R}_{v}^{d}}
\partial_{v_{j}}D^{s-1}_{v}[D_{x}f v_{j}  ]D^{s-1}_{v}D_{x}f\langle v\rangle^{ 2k}dxdv\\=&d\iint_{\mathbb{R}^{d}\times\mathbb{R}_{v}^{d}}
 D^{s-1}_{v}[D_{x}f    ]D^{s-1}_{v}D_{x}f\langle v\rangle^{ 2k}dxdv+\iint_{\mathbb{R}^{d}\times\mathbb{R}_{v}^{d}}
D^{s-1}_{v}[v_{j}D_{x}\partial_{v_{j}}f ]D^{s-1}_{v}D_{x}f\langle v\rangle^{ 2k}dxdv.
\ea \ee
We deduce from  integrating by parts that
$$\ba
&\iint_{\mathbb{R}^{d}\times\mathbb{R}_{v}^{d}}
D^{s-1}_{v}[v_{j}D_{x}\partial_{v_{j}}f ]D^{s-1}_{v}D_{x}f\langle v\rangle^{ 2k}dxdv \\=&-\f{1}{2}\iint_{\mathbb{R}^{d}\times\mathbb{R}_{v}^{d}}
 [ D^{s-1}_{v}D_{x}f]^{2}\partial_{v_{j}}[v_{j}\langle v\rangle^{ 2k}]dxdv+d\iint_{\mathbb{R}^{d}\times\mathbb{R}_{v}^{d}}
[D_{v}v_{j}D_{x}D^{s-2}_{v}\partial_{v_{j}}f ]D^{s-1}_{v}D_{x}f\langle v\rangle^{ 2k}dxdv,
\ea $$
which implies
$$\ba&
\B|\iint_{\mathbb{R}^{d}\times\mathbb{R}_{v}^{d}}
\partial_{v_{j}}D^{s-1}_{v}[D_{x}f v_{j}  ]D^{s-1}_{v}D_{x}f\langle v\rangle^{ 2k}dxdv\B|\leq C\|D^{s-1}_{v}D_{x}f\langle v\rangle^{ k}\|_{L^{2}_{x,v}}^{2}.
 \ea $$
Substituting this, \eqref{A.30}  and \eqref{A.31} into \eqref{a3.39}, we arrive at
 \be\ba\label{key3}
 &\f{d}{dt}\|D^{s-1}_{v}D_{x}f\langle v\rangle^{ k}\|_{L^{2}_{x,v}}^{2}
 \\ \leq&C\|\nabla_{x}D_{x} D_{v}^{s-2}f\langle v\rangle^{  k}\|_{L^{2}_{x,v}}\|D_{x} D^{s-1}_{v}f\langle v\rangle^{  k}\|_{L^{2}_{x,v}}+C\|u\|_{L^{\infty}_x}\|D_{x} D^{s-1}_{v}f\langle v\rangle^{  k}\|_{L^{2}_{x,v}}^{2}\\&+C\|\nabla u\|_{L^{\infty}_x}\|  D^{s }_{v}f\langle v\rangle^{  k}\|_{L^{2}_{x,v}}\|D_{x} D^{s-1}_{v}f\langle v\rangle^{  k}\|_{L^{2}_{x,v}} +C\|D^{s-1}_{v}D_{x}f\langle v\rangle^{ k}\|_{L^{2}_{x,v}}^{2}.
 \ea \ee
$\bullet$  Estimation of $\f{d}{dt}\|D_{x}^{\alpha}D^{\beta}_{v}f\langle v\rangle^{ k}\|  _{L^{2}}^{2}$.

The Vlasov  equation $\eqref{NSV}$
can be recast as
  \be\ba \label{A.39} \partial_t D_{x}^{\alpha}D^{\beta}_{v}f+D_{v}^\beta (v\cdot\nabla_{x}D_{x}^{\alpha}f)+\nabla_{v}D_{x}^{\alpha}[D^{\beta}_{v}f u   ]-\nabla_{v}D^{\beta}_{v}[D_{x}^{\alpha}f v  ] =0.
   \ea \ee
Multiplying both sides of the above equation by $D_{x}^{\alpha}D^{\beta}_{v}f
  \langle v\rangle^{2k}$ and integrating on $\mathbb{R}^{d}\times\mathbb{R}_{v}^{d}$, we infer that
\be\ba\label{a3.49}
&\f{d}{dt}\|D_{x}^{\alpha}D^{\beta}_{v}f\langle v\rangle^{ k}\|  _{L^{2}_{x,v}}^{2}\\=&- \iint_{\mathbb{R}^{d}\times\mathbb{R}_{v}^{d}} D_{v}^\beta (v\cdot\nabla_{x}D_{x}^{\alpha}f)D_{x}^{\alpha}D^{\beta}_{v}f
  \langle v\rangle^{2k} dxdv-\iint_{\mathbb{R}^{d}\times\mathbb{R}_{v}^{d}}
\nabla_{v}D_{x}^{\alpha}[D^{\beta}_{v}f u   ]D_{x}^{\alpha}D^{\beta}_{v}f
  \langle v\rangle^{2k} dxdv\\&+\iint_{\mathbb{R}^{d}\times\mathbb{R}_{v}^{d}}\nabla_{v}D^{\beta}_{v}[D_{x}^{\alpha}f v  ]D_{x}^{\alpha}D^{\beta}_{v}f
  \langle v\rangle^{2k} dxdv.
\ea \ee
It is not difficult to verify that
 \be\ba&\iint_{\mathbb{R}^{d}\times\mathbb{R}_{v}^{d}}
D_{v}^\beta (v\cdot\nabla_{x}D_{x}^{\alpha}f) D_{x}^{\alpha}D^{\beta}_{v}f
  \langle v\rangle^{2k} dxdv \\
   =& \iint_{\mathbb{R}^{d}\times\mathbb{R}_{v}^{d}}
(D_{v}v\cdot\nabla_{x}D_{v}^{\beta-1} D_{x}^{\alpha }f) D_{x}^{\alpha}D^{\beta}_{v}f
   \langle v\rangle^{2k}dxdv.
   \ea \ee
This and the Cauchy-Schartz inequality yield   that
\be\ba\label{a3.50}
\B|\iint_{\mathbb{R}^{d}\times\mathbb{R}_{v}^{d}}
D_{v}^\beta (v\cdot\nabla_{x}D_{x}^{\alpha}f) D_{x}^{\alpha}D^{\beta}_{v}f
  \langle v\rangle^{2k} dxdv\B|\leq C\| D_{v}^{\beta-1} D_{x}^{\alpha+1}f
   \langle v\rangle^{ k}\|_{L^{2}_{x,v}} \|D_{x}^{\alpha}D^{\beta}_{v}f
   \langle v\rangle^{ k}\|_{L^{2}_{x,v}}.
 \ea \ee
 Owing to the integration by parts, we conclude by the H\"older inequality that
 \be\ba\label{a3.51}&\B|\iint_{\mathbb{R}^{d}\times\mathbb{R}_{v}^{d}}
 \partial_{v_{i}}D_{x}^{\alpha}[D^{\beta}_{v}f u_{i}   ] D_{x}^{\alpha}D^{\beta}_{v}f
  \langle v\rangle^{2k} dxdv\B|\\=&
  \B|\iint_{\mathbb{R}^{d}\times\mathbb{R}_{v}^{d}}
 [\partial_{v_{i}}D_{x}^{\alpha}(D^{\beta}_{v}f u_{i} )-u_{i}\partial_{v_{i}}D_{x}^{\alpha} D^{\beta}_{v}f ]D_{x}^{\alpha}D^{\beta}_{v}f
  \langle v\rangle^{2k} dxdv+\iint_{\mathbb{R}^{d}\times\mathbb{R}_{v}^{d}} u_{i}\partial_{v_{i}}D_{x}^{\alpha} D^{\beta}_{v}f  D_{x}^{\alpha}D^{\beta}_{v}f
  \langle v\rangle^{2k} dxdv\B|\\=&\B|
  \iint_{\mathbb{R}^{d}\times\mathbb{R}_{v}^{d}}
 [\partial_{v_{i}}D_{x}^{\alpha}(D^{\beta}_{v}f u_{i} )-u_{i}\partial_{v_{i}}D_{x}^{\alpha} D^{\beta}_{v}f ]D_{x}^{\alpha}D^{\beta}_{v}f
  \langle v\rangle^{2k} dxdv-\f12\iint_{\mathbb{R}^{d}\times\mathbb{R}_{v}^{d}} u_{i}(D_{x}^{\alpha} D^{\beta}_{v}f  )^{2}
  \partial_{v_{i}}\langle v\rangle^{2k} dxdv
  \\\leq&\B( \iint_{\mathbb{R}^{d}\times\mathbb{R}_{v}^{d}}
[\partial_{v_{i}}D_{x}^{\alpha}(D^{\beta}_{v}f u_{i} )-u_{i}\partial_{v_{i}}D_{x}^{\alpha} D^{\beta}_{v}f ]^{2}
  \langle v\rangle^{2k} dxdv\B)^{\f12}\| D_{x}^{\alpha}D^{\beta}_{v}f
  \langle v\rangle^{ k}\|_{L^{2}_{x,v}}+ \|u\|_{L^{\infty}_x}\|D_{x}^{\alpha}D^{\beta}_{v}f
  \langle v\rangle^{ k}\|^{2}_{L^{2}_{x,v}}.
\ea \ee
The Kato-Ponce commutator estimate  allows us to get
  \be\ba\label{a3.52}
&\B( \iint_{\mathbb{R}^{d}\times\mathbb{R}_{v}^{d}}
[D_{x}^{\alpha}(D^{\beta}_{v}\partial_{v_{i}}f u_{i} )-u_{i}\partial_{v_{i}}D_{x}^{\alpha} D^{\beta}_{v}f ]^{2}
  \langle v\rangle^{2k} dxdv\B)^{\f12}\\\leq&
  \B( \int_{\mathbb{R}_{v}^{d}} (\|D_{x}^{\alpha}u\|_{L^{4}_{x}}^{2}\|D^{\beta}_{v}\partial_{v_{i}}f\|^{2}_{L^{4}_{x}}
  +\|\nabla u\|^{2}_{L^{\infty}_{x}}\|D_{x}^{\alpha-1}D^{\beta}_{v}\partial_{v_{i}}f\|^{2}_{L^{2}_{x}})
\langle v\rangle^{2k}dv\B)^{\f12}\\\leq&
  \|D_{x}^{\alpha}u\|_{L^{4}} \B( \int_{\mathbb{R}_{v}^{d}} \|D^{\beta}_{v}\partial_{v_{i}}f\|^{2}_{L^{4}_{x}}
 \langle v\rangle^{2k}dv\B)^{\f12} +\|\nabla u\|_{L^{\infty}_x}\B( \int_{\mathbb{R}_{v}^{d}}\|D_{x}^{\alpha-1}
 D^{\beta}_{v}\partial_{v_{i}}f\|^{2}_{L^{2}_{x}}
\langle v\rangle^{2k}dv\B)^{\f12}.
\ea \ee
We deduce from the
Gagliardo-Nirenberg inequality that
  \be\ba\label{a3.53} &\|D_{x}^{\alpha}u\|_{L^{4}_{x}}
\leq C\|D_{x}^{\alpha}u\|_{L^{2}_x}^{1-\f{n}{4}} \|D_{x}^{\alpha+1}u\|_{L^{2}_x}^{\f{n}{4}},   \\
&\|D^{\beta}_{v}\partial_{v_{i}}f\|_{L^{4}_{x}}\leq C\|D^{\beta}_{v}\partial_{v_{i}}f\|_{L^{2}_{x}}^{1-\f{n}{4}}
\|D_{x}D^{\beta}_{v}\partial_{v_{i}}f\|_{L^{2}_{x}}^{\f{n}{4}}.
  \ea \ee
  In view of the H\"older inequality, we remark that
\be\ba\B( \int_{\mathbb{R}^{n}}   \|D^{\beta}_{v}\partial_{v_{i}}f\|^{2}_{L^{4}}
 \langle v\rangle^{2k}dv\B)^{\f12}\leq
 C\|D^{\beta}_{v}\partial_{v_{i}}f\langle v\rangle^{ k}\|_{L^{2}_{x,v}}^{1-\f{n}{4}}
\|D_{x}D^{\beta}_{v}\partial_{v_{i}}f\langle v\rangle^{ k}\|_{L^{2}_{x,v}}^{\f{n}{4}}.
 \ea \ee
Substituting this and \eqref{a3.53} into \eqref{a3.52}, we infer that
   \be\ba
&\B( \iint_{\mathbb{R}^{d}\times\mathbb{R}_{v}^{d}}
[D_{x}^{\alpha}(D^{\beta}_{v}\partial_{v_{i}}f u_{i} )-u_{i}\partial_{v_{i}}D_{x}^{\alpha} D^{\beta}_{v}f ]^{2}
  \langle v\rangle^{2k} dxdv\B)^{\f12}\\\leq&
  C\|D_{x}^{\alpha}u\|_{L^{2}_x}^{1-\f{n}{4}} \|D_{x}^{\alpha+1}u\|_{L^{2}_x}^{\f{n}{4}}\|D^{\beta}_{v}\partial_{v_{i}}f\langle v\rangle^{ k}\|_{L^{2}_{x,v}}^{1-\f{n}{4}}
\|D_{x}D^{\beta}_{v}\partial_{v_{i}}f\langle v\rangle^{ k}\|_{L^{2}_{x,v}}^{\f{n}{4}}\\&+C\|\nabla u\|_{L^{\infty}_x}
\|D_{x}^{\alpha-1}D^{\beta}_{v}\partial_{v_{i}}f\langle v\rangle^{ k}\|_{L^{2}_{x,v}}.
  \ea \ee
Combining this and \eqref{a3.51}, we discover that
  \be\ba\label{a3.57}&\B|\iint_{\mathbb{R}^{d}\times\mathbb{R}_{v}^{d}}
 \partial_{v_{i}}D_{x}^{\alpha}[D^{\beta}_{v}f u_{i}   ] D_{x}^{\alpha}D^{\beta}_{v}f
  \langle v\rangle^{2k} dxdv\B|\\\leq&
  C\|D_{x}^{\alpha}u\|_{L^{2}_x}^{1-\f{n}{4}} \|D_{x}^{\alpha+1}u\|_{L^{2}_x}^{\f{n}{4}}\|D^{\beta}_{v}\partial_{v_{i}}f\langle v\rangle^{ k}\|_{L^{2}_{x,v}}^{1-\f{n}{4}}
\|D_{x}D^{\beta}_{v}\partial_{v_{i}}f\langle v\rangle^{ k}\|_{L^{2}_{x,v}}^{\f{n}{4}}\| D_{x}^{\alpha}D^{\beta}_{v}f
  \langle v\rangle^{ k}\|_{L^{2}_{x,v}}\\&+C\|\nabla u\|_{L^{\infty}_x}
\|D_{x}^{\alpha-1}D^{\beta}_{v}\partial_{v_{i}}f\langle v\rangle^{ k}\|_{L^{2}_{x,v}} \| D_{x}^{\alpha}D^{\beta}_{v}f
  \langle v\rangle^{ k}\|_{L^{2}_{x,v}}.
    \ea \ee
A simple calculation  means
  \be\ba\label{A.43}
  &\iint_{\mathbb{R}^{d}\times\mathbb{R}_{v}^{d}} \partial_{v_{j}}D^{\beta}_{v}[D_{x}^{\alpha}f v _{j} ] D_{x}^{\alpha}D^{\beta}_{v}f
  \langle v\rangle^{2k} dxdv\\ =&\iint_{\mathbb{R}^{d}\times\mathbb{R}_{v}^{d}} [ D^{\beta}_{v}D_{x}^{\alpha}f   ]^{2}
  \langle v\rangle^{2k} dxdv+\iint_{\mathbb{R}^{d}\times\mathbb{R}_{v}^{d}} D^{\beta}_{v}[D_{x}^{\alpha}\partial_{v_{j}}f v _{j} ] D_{x}^{\alpha}D^{\beta}_{v}f
  \langle v\rangle^{2k} dxdv.
 \ea \ee
On account of integration by parts, one has
  \be\ba\label{A.44}
 &\iint_{\mathbb{R}^{d}\times\mathbb{R}_{v}^{d}} D^{\beta}_{v}[D_{x}^{\alpha}\partial_{v_{j}}f v _{j} ] D_{x}^{\alpha}D^{\beta}_{v}f
  \langle v\rangle^{2k} dxdv \\= &-\f12\iint_{\mathbb{R}^{d}\times\mathbb{R}_{v}^{d}}    (D_{x}^{\alpha}D^{\beta}_{v}f)^{2}
  \partial_{v_{j}}(v _{j}\langle v\rangle^{2k}) dxdv+\iint_{\mathbb{R}^{d}\times\mathbb{R}_{v}^{d}} D_{v}v _{j} D^{\beta-1}_{v} D_{x}^{\alpha}\partial_{v_{j}}f   D_{x}^{\alpha}D^{\beta}_{v}f
  \langle v\rangle^{2k} dxdv,
  \ea \ee
As a consequence, combining \eqref{A.43} and \eqref{A.44}, we infer that
 \be\ba\label{a3.60}
  &\B|\iint_{\mathbb{R}^{d}\times\mathbb{R}_{v}^{d}} \partial_{v_{j}}D^{\beta}_{v}[D_{x}^{\alpha}f v _{j} ] D_{x}^{\alpha}D^{\beta}_{v}f
  \langle v\rangle^{2k} dxdv\B|\leq C\|  D^{\beta}_{v}D_{x}^{\alpha}f   \langle v\rangle^{ k}\|_{L^{2}_{x,v}}^{2}.
 \ea \ee
Inserting \eqref{a3.50},  \eqref{a3.57}, \eqref{a3.60} into \eqref{a3.49}, one has
 \be\ba \label{key4}
 &\f{d}{dt}\|D_{x}^{\alpha}D^{\beta}_{v}f\langle v\rangle^{ k}\|  _{L^{2}_{x,v}}^{2}\\
 \leq& C\| D_{v}^{\beta-1} D_{x}^{\alpha+1}f
   \langle v\rangle^{ k}\|_{L^{2}_{x,v}} \|D_{x}^{\alpha}D^{\beta}_{v}f
   \langle v\rangle^{ k}\|_{L^{2}_{x,v}}\\+&C\|D_{x}^{\alpha}u\|_{L^{2}_x}^{1-\f{n}{4}} \|D_{x}^{\alpha+1}u\|_{L^{2}_x}^{\f{n}{4}}\|D^{\beta}_{v}\partial_{v_{i}}f\langle v\rangle^{ k}\|_{L^{2}_{x,v}}^{1-\f{n}{4}}
\|D_{x}D^{\beta}_{v}\partial_{v_{i}}f\langle v\rangle^{ k}\|_{L^{2}_{x,v}}^{\f{n}{4}}\| D_{x}^{\alpha}D^{\beta}_{v}f
  \langle v\rangle^{ k}\|_{L^{2}_{x,v}}\\&+C\|\nabla u\|_{L^{\infty}_x}
\|D_{x}^{\alpha-1}D^{\beta}_{v}\partial_{v_{i}}f\langle v\rangle^{ k}\|_{L^{2}_{x,v}} \| D_{x}^{\alpha}D^{\beta}_{v}f
  \langle v\rangle^{ k}\|_{L^{2}_{x,v}}+C\|  D^{\beta}_{v}D_{x}^{\alpha}f   \langle v\rangle^{ k}\|_{L^{2}_{x,v}}^{2}.
 \ea \ee
From \eqref{A.39}, we see that
  $$\ba
   \partial_t D_{x}^{2}D^{s-2}_{v}f+D_{v}^{s-2} (v\cdot\nabla_{x}D_{x}^{2}f)+\nabla_{v}D_{x}^{2}[D^{s-2}_{v}f u   ]-\nabla_{v}D^{s-2}_{v}[D_{x}^{2}f v  ] =0.
   \ea $$
 By suitable modification to the deduction of \eqref{a3.52}, we conclude by the Sobolev embedding theorem that
 \be\ba\label{aa3.68}
&\B( \iint_{\mathbb{R}^{d}\times\mathbb{R}_{v}^{d}}
[D_{x}^{2}(D^{s-2}_{v}\partial_{v_{i}}f u_{i} )-u_{i}\partial_{v_{i}}D_{x}^{2} D^{s-2}_{v}f ]^{2}
  \langle v\rangle^{2k} dxdv\B)^{\f12}\\\leq&
  \B( \int_{\mathbb{R}_{v}^{d}}  (\|D_{x}^{2}u\|_{L^{3}_{x}}^{2}
  \|D^{s-2}_{v}\partial_{v_{i}}f\|^{2}_{L^{6}_{x}}
  +\|\nabla u\|^{2}_{L^{\infty}}\|D_{x} D^{s-2}_{v}
  \partial_{v_{i}}f\|^{2}_{L^{2}_{x}})
\langle v\rangle^{2k}dv\B)^{\f12} \\\leq&C
  \|D_{x}^{2}u\|_{L^{3}_{x}}  \|D_{x}D^{s-2}_{v}\partial_{v_{i}}f \langle v\rangle^{ k} \|_{L^{2}}
+\|\nabla u\|_{L^{\infty}_x} \|D_{x} D^{s-2}_{v}\partial_{v_{i}}f\langle v\rangle^{ k}\|_{L^{2}_{x,v}}.
 \ea \ee
Inserting this into \eqref{3.51}, one obtains
\be\ba\label{a3.69} &\B|\iint_{\mathbb{R}^{d}\times\mathbb{R}_{v}^{d}}
 \partial_{v_{i}}D_{x}^{2}[D^{s-2}_{v}f u_{i}   ] D_{x}^{2}D^{s-2}_{v}f
  \langle v\rangle^{2k} dxdv\B|\\\leq&
 C
  \|D_{x}^{2}u\|_{L^{3}_{x}}  \|D_{x}D^{s-2}_{v}\partial_{v_{i}}f \langle v\rangle^{ k} \|_{L^{2}_{x,v}}\| D_{x}^{2}D^{s-2}_{v}f
  \langle v\rangle^{ k}\|_{L^{2}_{x,v}}
\\&+\|\nabla u\|_{L^{\infty}_x} \|D_{x} D^{s-2}_{v}\partial_{v_{i}}f\langle v\rangle^{ k}\|_{L^{2}_{x,v}}  \| D_{x}^{2}D^{s-2}_{v}f
  \langle v\rangle^{ k}\|_{L^{2}_{x,v}}.
    \ea \ee
    which together with \eqref{a3.50} and
\eqref{a3.60} means that
  \be\ba\label{A.55}
 &\f{d}{dt}\|D_{x}^{2}D^{s-2}_{v}f\langle v\rangle^{ k}\|  _{L^{2}_{x,v}}^{2}\\
 \leq& C\| D_{v}^{s-3} D_{x}^{3}f
   \langle v\rangle^{ k}\|_{L^{2}_{x,v}} \|D_{x}^{2}D^{s-2}_{v}f
   \langle v\rangle^{ k}\|_{L^{2}_{x,v}}\\&+C
  \|D_{x}^{2}u\|_{L^{3}_{x}}  \|D_{x}D^{s-2}_{v}\partial_{v_{i}}f \langle v\rangle^{ k} \|_{L^{2}_{x,v}}\| D_{x}^{2}D^{s-2}_{v}f
  \langle v\rangle^{ k}\|_{L^{2}_{x,v}}
\\&+\|\nabla u\|_{L^{\infty}_x} \|D_{x} D^{s-2}_{v}\partial_{v_{i}}f\langle v\rangle^{ k}\|_{L^{2}_{x,v}}  \| D_{x}^{2}D^{s-2}_{v}f
  \langle v\rangle^{ k}\|_{L^{2}_{x,v}}+C\|  D^{s-2}_{v}D_{x}^{2}f   \langle v\rangle^{ k}\|_{L^{2}_{x,v}}^{2}.
 \ea \ee
It is clear that
   $$\ba
 D^{3}=\{D_{x}^{3},D_{v}^{3},D_{x}D_{v}^{2},D^{2}_{x}D_{v} \}.
\ea $$
Hence,
putting \eqref{key1}, \eqref{key2}, \eqref{key3} and \eqref{A.55} together, we end up with
\be\ba\label{a.50}
\f{d}{dt}\|D^{3}   f\langle v\rangle^{k}\|_{L^{2}_{x,v}}^{2} \leq & C
  \|D^{s}_{x} u \|_{L^{2}_{x}}\B(\int_{\mathbb{R}^{n}}\|\partial_{ v_{i}}f\|^{2}_{L_{x}^{\infty}}\langle v\rangle^{2k}dv \B)^{\f12}\| D^{s}_{x}f\langle v\rangle^{ k}\|_{L^{2}_{x,v}}\\ &+C(\|\nabla u\|_{L^{\infty}_x}+\|u\|_{L^{\infty}_x}+
  \|D_{x}^{2}u\|_{L^{3}_{x}} +1) \|D^{3} f\langle v\rangle^{ k}\|_{L^{2}_{x,v}}^{2}.
 \ea \ee
 The
Gagliardo-Nirenberg inequality ensures that
$$\ba
&\|\partial_{ v_{i}}f\| _{L_{x}^{\infty}(\mathbb{R}^{2})}\leq C\|\partial_{ v_{i}}f\|^{\f12} _{L_{x}^{2}(\mathbb{R}^{2})}
\|\nabla^{2} \partial_{ v_{i}}f\| _{L_{x}^{2}(\mathbb{R}^{2})}^{\f12} \\&\|\partial_{ v_{i}}f\| _{L_{x}^{\infty}(\mathbb{R}^{3})} \leq C\|  \partial_{ v_{i}}f\|^{\f12} _{L_{x}^{6}(\mathbb{R}^{3})}
\|\nabla^{2} \partial_{ v_{i}}f\| _{L_{x}^{2}(\mathbb{R}^{3})}^{\f12}\leq C\|\nabla \partial_{ v_{i}}f\|^{\f12} _{L_{x}^{2}(\mathbb{R}^{3})}
\|\nabla^{2} \partial_{ v_{i}}f\| _{L_{x}^{2}(\mathbb{R}^{3})}^{\f12},\ea$$
which together with H\"older inequality means
\be\ba\label{a3.78}&\B(\int_{\mathbb{R}_{v}^{2}}  \|\partial_{ v_{i}}f\|^{2}_{L^{\infty}(\mathbb{R}^{2})}\langle v\rangle^{2k}dv \B)^{\f12}\leq  C\| \partial_{ v_{i}} f\langle v\rangle^{k}\|_{L_{x}^{2}(\mathbb{R}^{2}\times\mathbb{R}^{2}_{v})}^{\f12}\|\nabla^{2} \partial_{ v_{i}} f\langle v\rangle^{k}\|_{L_{x}^{2}(\mathbb{R}^{2}\times\mathbb{R}^{2}_{v})}^{\f12}\\&
\B(\int_{\mathbb{R}_{v}^{3}} \|\partial_{ v_{i}}f\|^{2}_{L^{\infty}(\mathbb{R}^{3})}\langle v\rangle^{2k}dv \B)^{\f12}\leq C  \|\nabla \partial_{ v_{i}} f\langle v\rangle^{k}\|_{L_{x,v}^{2}(\mathbb{R}^{3}\times\mathbb{R}^{3}_{v})}^{\f12}\|\nabla^{2} \partial_{ v_{i}}f\langle v\rangle^{k}\|_{L_{x,v}^{2}(\mathbb{R}^{3}\times\mathbb{R}^{3}_{v})}^{\f12}.
\ea\ee
It follows from \eqref{a.50} and \eqref{a3.78} that \eqref{pri4}.

In addition, it is trivial to  see  that $$\ba
 D^{s}=\{D_{x}^{s},D_{v}^{s},D_{v}^{s-1}D_{x},D_{x}^{2}D^{s-2}_{v},D_{v}^{\beta}D^{\alpha}_{x},
 3\leq\alpha\leq s-1,1\leq\beta\leq s-3, \alpha+\beta=s, s\geq4 \}.
\ea$$
As a consequence, we derive  from \eqref{key1}, \eqref{key2}, \eqref{key3}, \eqref{key4} and \eqref{A.55}
that
 \be\ba\label{a3.62}
 &\f{d}{dt}\|D^{s} f\langle v\rangle^{k}\|_{L^{2}_{x,v}}^{2}
\\
 \leq &C
  \|D^{s}_{x} u \|_{L^{2}_{x}}\B(\int_{\mathbb{R}^{d}_{v}} \|\partial_{ v_{i}}f\|^{2}_{L^{\infty}}\langle v\rangle^{2k}dv \B)^{\f12}\| D^{s}_{x}f\langle v\rangle^{ k}\|_{L^{2}_{x,v}}\\&+C(\|\nabla u\|_{L^{\infty}_x}  + \|u\|_{L^{\infty}_x} + 1+\|D_{x}^{2}u\|_{L^{3}_{x}})\| D^{s}_{x}f\langle v\rangle^{ k}\|_{L^{2}_{x,v}}^{2}\\&   +C\|D_{x}^{\alpha}u\|_{L^{2}_x}^{1-\f{n}{4}} \|D_{x}^{\alpha+1}u\|_{L^{2}_x}^{\f{n}{4}}\|D^{\beta}_{v}\partial_{v_{i}}f\langle v\rangle^{ k}\|_{L^{2}_{x,v}}^{1-\f{n}{4}}
\|D_{x}D^{\beta}_{v}\partial_{v_{i}}f\langle v\rangle^{ k}\|_{L^{2}_{x,v}}^{\f{n}{4}}\| D_{x}^{\alpha}D^{\beta}_{v}f
  \langle v\rangle^{ k}\|_{L^{2}_{x,v}}.
\ea \ee
Substituting  \eqref{a3.78} into \eqref{a3.62}, we get the desired estimate \eqref{pri5}. This completes the proof of  higher moment estimates.
\end{proof}
\section*{Acknowledgement}
The authors appreciate sincerely thank Professor Minsuk Yang for his discussion on this problem.
Wang was partially sponsored by Natural Science
Foundation of Henan (No. 232300421077) and supported by the National Natural
Science Foundation of China under grant (No. 11971446). Wei was partially supported by the National Natural Science Foundation of China under grant (No. 12271433).
  Ye is supported by the
Training Plan for Young Backbone Teachers in Colleges and Universities of Henan Province
(No. 2024GGJS022). Yu is partially supported by the National Natural
Science Foundation of China under grant (No. 12571245).

%\end{CJK*}
\end{document}